\newcommand\myeq{\mathrel{\stackrel{\makebox[0pt]{\mbox{\normalfont\tiny loc}}}{=}}}
\newcommand\opteq[1]{\mathrel{\mathpalette\opt@eq{#1}}}
\newcommand{\opt@eq}[2]{%
  \begingroup
  \sbox\z@{$#1#2$}%
  \sbox\tw@{\resizebox{!}{.5\ht\z@}{$\m@th#1($}}%
  \nonscript\hskip-\wd\tw@
  \mkern1mu
  \raisebox{-.35\ht\z@}[0pt][0pt]{\resizebox{!}{.5\ht\z@}{$\m@th#1($}}%
  \mkern-1mu
  {#2}%
  \mkern-1mu
  \raisebox{-.35\ht\z@}[0pt][0pt]{\resizebox{!}{.5\ht\z@}{$\m@th#1)$}}%
  \mkern1mu
  \nonscript\hskip-\wd\tw@
  \endgroup
}
\newcommand{\geoq}{\opteq{\geq}}
\newtheorem{theorem}{Theorem}[section]
\newtheorem*{theorem*}{Theorem}
\newtheorem{theorem-non}{Theorem}
\newtheorem{lemma-non}{Lemma}
\theoremstyle{definition} 
\newtheorem{thm}{Theorem}
\theoremstyle{definition}
\newtheorem{conjecture-non}{Conjecture}
\newtheorem{corollary-non}{Corollary}
\newtheorem{proposition}[theorem]{Proposition}
\newtheorem{lemma}[theorem]{Lemma}
\newtheorem*{lemma*}{Lemma}
\newtheorem*{conjecture*}{Conjecture}
\theoremstyle{definition}
\newtheorem{definition}[theorem]{Definition}
\theoremstyle{remark}
\newtheorem{remark}[theorem]{Remark}
\DeclareMathOperator{\rank}{rank}
\numberwithin{equation}{section}
\begin{document}
\title[Eder M. Correa]{DHYM connections on higher rank holomorphic vector bundles over ${\mathbbm{P}}(T_{{\mathbbm{P}^{2}}})$}

\author{Eder M. Correa}


\address{{IMECC-Unicamp, Departamento de Matem\'{a}tica. Rua S\'{e}rgio Buarque de Holanda 651, Cidade Universit\'{a}ria Zeferino Vaz. 13083-859, Campinas-SP, Brazil}}
\address{E-mail: {\rm ederc@unicamp.br}}

\begin{abstract} 
We construct the first explicit non-trivial example of deformed Hermitian Yang-Mills (dHYM) connection on a higher rank slope-unstable holomorphic vector bundle over a Fano threefold. Additionally, we provide a sufficient algebraic condition in terms of central charges for the existence of dHYM connections on Whitney sum of holomorphic line bundles over rational homogeneous varieties. As a consequence, we obtain several new examples of dHYM connections on higher rank holomorphic vector bundles.
\end{abstract}

\maketitle

\hypersetup{linkcolor=black}

\hypersetup{linkcolor=black}

\section{Introduction}
Let $(X,\omega)$ be a compact connected K\"{a}hler manifold, such that $\dim_{\mathbbm{C}}(X) = n$, and $[\psi] \in H^{1,1}(X,\mathbbm{R})$. The {\textit{deformed Hermitian Yang-Mills}} (dHYM) {\textit{equation}} asks for a canonical representative $\chi \in [\psi]$ satisfying 
\begin{equation}
\label{dHYMeq}
{\rm{Im}}\big ( \omega + \sqrt{-1}\chi\big)^{n} = \tan(\hat{\Theta}) {\rm{Re}}\big ( \omega + \sqrt{-1}\chi\big)^{n},
\end{equation}
such that $\hat{\Theta} = {\rm{Arg}} \int_{X}\frac{(\omega + \sqrt{-1}\psi)^{n}}{n!} \ ({\rm{mod}} \ 2\pi)$. This equation was originally derived in the physics literature for the case of line bundles, e.g. \cite{marino2000nonlinear}, \cite{leung2000special}, from a mathematical perspective, the equation arises through SYZ mirror symmetry as the dual to the special Lagrangian equation on a Calabi–Yau manifold. The analytical study of the dHYM equation was initiated by Jacob–Yau \cite{JacobYau2017} and further explored in a series of works \cite{Collins2018deformed, Collins2020, Collins2021moment, Collins2020stability}, see also \cite{Pingali2019}, \cite{Chen2021j}, and \cite{chu2021nakai}. As it can be shown, Eq. (\ref{dHYMeq}) has an alternative (equivalent) formulation in terms of the notion of {\textit{Lagrangian phase}} \cite{Collins2020}, more precisely, Eq. (\ref{dHYMeq}) is equivalent to the fully nonlinear elliptic equation
\begin{equation}
\label{dHYMeq2v}
\Theta_{\omega}(\chi) := \sum_{j = 1}^{n}\arctan(\lambda_{j}) = \hat{\Theta} \ ({\rm{mod}} \ 2\pi),
\end{equation}
where $\lambda_{1},\ldots,\lambda_{n}$ are the eigenvalues of $\omega^{-1} \circ \chi$. In this last equation $\Theta_{\omega}(\chi)$ is called the Lagrangian phase of $\chi$ with respect to $\omega$. In \cite{correa2023deformed}, by describing explicitly the Lagrangian phase of every closed invariant real $(1,1)$-form in terms of Lie theory, the author proved that the dHYM equation always admits a solution if $(X,\omega)$ is a rational homogeneous variety. In higher rank, we have the following generalization suggested by Collins–Yau \cite{collins2018moment}: Given a holomorphic vector bundle ${\bf{E}} \to (X,\omega)$, we say that a Hermitian metric ${\bf{h}}$ on ${\bf{E}}$ solves the dHYM equation if the curvature $F_{\nabla}$ of the associated Chern connection $\nabla$ satisfies
\begin{equation}
\label{dHYMhigher}
{\rm{Im}}\Big ( {\rm{e}}^{-\sqrt{-1} \hat{\Theta}({\bf{E}})}\Big (\omega \otimes \mathbbm{1}_{{\bf{E}}} - \frac{1}{2\pi}F_{\nabla} \Big )^{n} \Big ) = 0,
\end{equation}
such that 
\begin{equation}
\label{phaseanglehigherbundle}
\hat{\Theta}({\bf{E}}) = {\rm{Arg}}\int_{X}{\rm{tr}}\Big (\omega \otimes \mathbbm{1}_{{\bf{E}}} - \frac{1}{2\pi}F_{\nabla}\Big)^{n} \ (\text{mod} \ 2\pi).
\end{equation}
In the above setting, we call $\nabla$ satisfying Eq. (\ref{dHYMhigher}) a {\textit{dHYM connection}}. There are by now in the literature many important results concerned with dHYM connections on holomorphic line bundles, and few results are known in the higher rank setting. In \cite{dervan2020z}, strong mathematical justification that the higher rank deformed Hermitian Yang-Mills equation suggested by Collins-Yau is indeed the appropriate equation was provided. Moreover, among other results, in \cite{dervan2020z} the authors introduced the notion of Z{\textit{-critical connection}} and proved that, in the large volume limit, a sufficiently smooth holomorphic vector bundle \cite{leung1997einstein} admits a Z-critical connection if and only if it is {\textit{asymptotically} Z{\textit{-stable}}. In this paper, we study Eq. (\ref{dHYMhigher}) on holomorphic vector bundles over rational homogeneous varieties equipped with some invariant K\"{a}hler metric. A rational homogeneous variety can be described as a quotient $X_{P} = G^{\mathbbm{C}}/P$, where $G^{\mathbbm{C}}$ is a semisimple complex algebraic group with Lie algebra $\mathfrak{g}^{\mathbbm{C}} = {\rm{Lie}}(G^{\mathbbm{C}})$, and $P$ is a parabolic Lie subgroup (Borel-Remmert \cite{BorelRemmert}). Regarding $G^{\mathbbm{C}}$ as a complex analytic space, without loss of generality, we may assume that $G^{\mathbbm{C}}$ is a connected simply connected complex simple Lie group. Fixed a compact real form $G \subset G^{\mathbbm{C}}$, one can consider $X_{P} = G/G \cap P$ as a $G$-space. In order to state our first result, let us recall some terminology. Fixed a K\"{a}hler class $\xi \in \mathcal{K}(X_{P})$, the {\textit{slope}} $\mu_{\xi}({\bf{E}})$ of a holomorphic vector bundle ${\bf{E}} \to X_{P}$, with respect to $\xi$, is defined by
\begin{equation}
\mu_{\xi}({\bf{E}}):= \frac{\int_{X_{P}}c_{1}({\bf{E}}) \wedge \xi^{n-1}}{\rank({\bf{E}})}.
\end{equation}
From above, a holomorphic vector bundle ${\bf{E}} \to X_{P}$ is said to be {\textit{slope-(semi)stable}} if 
\begin{equation}
\mu_{\xi}({\bf{E}}) \geoq \mu_{\xi}(\mathcal{F}),
\end{equation}
for every subsheaf $0 \neq \mathcal{F} \varsubsetneq {\bf{E}}$. Further, we say that ${\bf{E}}$ is {\textit{slope-polystable}} if it is isomorphic to a direct sum of stable vector bundles of the same slope, and we say that ${\bf{E}}$ is {\textit{slope-unstable}} if it is not slope-semistable. Considering the rational homogeneous Fano threefold ${\mathbbm{P}}(T_{{\mathbbm{P}^{2}}})$, motivated by \cite[Conjecture 1.3]{MR4479717}, \cite{dervan2020z}, and by the increasing interest in the existence of Bridgeland stability conditions on Fano threefolds \cite{MR3121850}, \cite{MR3370123}, \cite{MR3573975}, \cite{MR3454685, koseki2018stability}, \cite{MR3994590, MR3908763}, \cite{MR4127165}, we prove the following theorem.
\begin{thm}
\label{TheoA}
Fixed the unique ${\rm{SU}}(3)$-invariant K\"{a}hler metric $\omega_{0} \in  c_{1}({\mathbbm{P}}(T_{{\mathbbm{P}^{2}}}))$, then there exist holomorphic vector bundles ${\bf{E}}_{j} \to {\mathbbm{P}}(T_{{\mathbbm{P}^{2}}})$, $j=1,2,3$, satisfying the following:
\begin{enumerate}
\item $\rank({\bf{E}}_{j}) > 1$, $j = 1,2,3$;
\item ${\bf{E}}_{1}$ admits a Hermitian structure ${\bf{h}}$ with associated Chern connection $\nabla$ satisfying
\begin{equation}
\begin{cases}\sqrt{-1}\Lambda_{\omega_{0}}(F_{\nabla}) = c\mathbbm{1}_{{\bf{E}}_{1}}, \\ {\rm{Im}}\Big ({\rm{e}}^{-\sqrt{-1}\hat{\Theta}({\bf{E}}_{1})}\Big (\omega_{0} \otimes \mathbbm{1}_{{\bf{E}}_{1}} - \frac{1}{2\pi}F_{\nabla}\Big )^{3}\Big ) =  0,\end{cases}
\end{equation}
such that $c = \frac{\pi}{8} \mu_{[\omega_{0}]}({\bf{E}}_{1})$ and $\hat{\Theta}({\bf{E}}_{1}) = {\rm{Arg}}\int_{{\mathbbm{P}}(T_{{\mathbbm{P}^{2}}})}{\rm{tr}}\Big (\omega_{0} \otimes \mathbbm{1}_{{\bf{E}}_{1}} - \frac{1}{2\pi}F_{\nabla}\Big)^{3} \ (\text{mod} \ 2\pi)$.
\item ${\bf{E}}_{2}$ admits a Hermitian structure ${\bf{h}}$ with associated Chern connection $\nabla$ satisfying 
\begin{equation}
\begin{cases}\sqrt{-1}\Lambda_{\omega_{0}}(F_{\nabla}) = c\mathbbm{1}_{{\bf{E}}_{2}}, \\ {\rm{Im}}\Big ({\rm{e}}^{-\sqrt{-1}\hat{\Theta}({\bf{E}}_{2})}\Big (\omega_{0} \otimes \mathbbm{1}_{{\bf{E}}_{2}} - \frac{1}{2\pi}F_{\nabla}\Big )^{3}\Big ) \neq  0,\end{cases}
\end{equation}
for $c = \frac{\pi}{8} \mu_{[\omega_{0}]}({\bf{E}}_{2})$ and for all $\hat{\Theta}({\bf{E}}_{2}) \in \mathbbm{R}$.
\item ${\bf{E}}_{3}$ admits a Hermitian structure ${\bf{h}}$ with associated Chern connection $\nabla$ satisfying
\begin{equation}
\begin{cases} \sqrt{-1}\Lambda_{\omega_{0}}(F_{\nabla}) \neq c\mathbbm{1}_{{\bf{E}}_{3}}, \\ {\rm{Im}}\Big ({\rm{e}}^{-\sqrt{-1}\hat{\Theta}({\bf{E}}_{3})}\Big (\omega_{0} \otimes \mathbbm{1}_{{\bf{E}}_{3}} - \frac{1}{2\pi}F_{\nabla}\Big )^{3}\Big ) = 0,\end{cases}
\end{equation}
for all $c \in \mathbbm{R}$, such that $\hat{\Theta}({\bf{E}}_{3}) = = {\rm{Arg}}\int_{{\mathbbm{P}}(T_{{\mathbbm{P}^{2}}})}{\rm{tr}}\Big (\omega_{0} \otimes \mathbbm{1}_{{\bf{E}}_{3}} - \frac{1}{2\pi}F_{\nabla}\Big)^{3} \ (\text{mod} \ 2\pi)$.
\end{enumerate}
In particular, ${\bf{E}}_{1}$ and ${\bf{E}}_{2}$ are slope-polystable, and ${\bf{E}}_{3}$ is slope-unstable.
\end{thm}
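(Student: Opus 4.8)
The plan is to realize $\mathbbm{P}(T_{\mathbbm{P}^{2}})$ as the full flag manifold $X_B = \mathrm{SL}(3,\mathbbm{C})/B$ of type $A_2$, with compact real form $\mathrm{SU}(3)$, and to build each $\mathbf{E}_j$ as a Whitney sum of homogeneous holomorphic line bundles carrying their canonical invariant Hermitian structures. Writing $\varpi_1,\varpi_2$ for the fundamental weights and $\alpha_1,\alpha_2,\alpha_1+\alpha_2$ for the positive roots, the invariant Kähler cone is spanned by $\varpi_1,\varpi_2$ and the anticanonical class is $c_1(X_B) = 2\rho = 2\varpi_1+2\varpi_2$, so $\omega_0$ is the invariant representative of $2\rho$. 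Each weight $\lambda = p\varpi_1+q\varpi_2$ determines a line bundle $\mathcal{L}_\lambda$ whose invariant Chern connection $\nabla_\lambda$ has curvature a fixed invariant $(1,1)$-form; set $\chi_\lambda = \frac{\sqrt{-1}}{2\pi}F_{\nabla_\lambda}$, a representative of $c_1(\mathcal{L}_\lambda)$.

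First I would record the reduction that turns both equations into algebra. For the block-diagonal invariant connection on $\mathbf{E} = \bigoplus_i \mathcal{L}_{\lambda_i}$ the curvature is diagonal, and the $i$-th block of $\omega_0\otimes\mathbbm{1} - \frac{1}{2\pi}F_\nabla$ equals $\omega_0 + \sqrt{-1}\chi_{\lambda_i}$, so $\big(\omega_0\otimes\mathbbm{1} - \frac{1}{2\pi}F_\nabla\big)^3$ is the diagonal matrix with entries $(\omega_0+\sqrt{-1}\chi_{\lambda_i})^3$, each an invariant top-form. Since the space of invariant top-forms on $X_B$ is one-dimensional, $(\omega_0+\sqrt{-1}\chi_{\lambda_i})^3 = \frac{Z_i}{{\rm Vol}}\,dV$ where $Z_i = \int_{X_B}(\omega_0+\sqrt{-1}\chi_{\lambda_i})^3$ is the central charge. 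Hence (\ref{dHYMhigher}) holds, as a form identity, if and only if all the $Z_i$ share a common argument, in which case $\hat{\Theta}(\mathbf{E}) = \arg\sum_i Z_i$ equals that argument; this is the central-charge criterion. By homogeneity $\sqrt{-1}\Lambda_{\omega_0}F_\nabla$ is diagonal with constant entries $\frac{2\pi\cdot 3}{\int_{X_B}\omega_0^3}\,\deg(\mathcal{L}_{\lambda_i})$, so $\sqrt{-1}\Lambda_{\omega_0}F_\nabla = c\,\mathbbm{1}$ holds if and only if all summands have equal degree, with $c = \frac{\pi}{8}\mu_{[\omega_0]}(\mathbf{E})$ after substituting $\int_{X_B}\omega_0^3 = 48$.

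Next I would compute the intersection data by Lie theory. The Weyl dimension formula gives $\int_{X_B}\frac{c_1(\mathcal{L}_\lambda)^3}{3!} = \prod_{\alpha>0}\frac{\langle\lambda,\alpha^\vee\rangle}{\langle\rho,\alpha^\vee\rangle} = \frac{pq(p+q)}{2}$, whence $\int_{X_B}c_1(\mathcal{L}_\lambda)^3 = 3pq(p+q)$; polarizing and setting $\omega_0 = 2\rho$ yields $\deg(\mathcal{L}_\lambda)\propto p+q$, together with ${\rm Re}\,Z = 48 - 6(p+q)^2 - 12pq$ and ${\rm Im}\,Z = 3(p+q)(12-pq)$. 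With these closed forms the three cases reduce to a finite search: take $\mathbf{E}_1 = \mathcal{L}_{\varpi_1}\oplus\mathcal{L}_{\varpi_2}$ (distinct summands with equal $p+q$ and equal $pq$, hence equal $Z$: both HYM and dHYM); take $\mathbf{E}_2 = \mathcal{L}_{3\varpi_1}\oplus\mathcal{L}_{2\varpi_1+\varpi_2}$ (equal $p+q=3$ but $pq = 0\neq 2$, giving equal slope yet $Z_1 = -6+108\sqrt{-1}$ and $Z_2 = -30+90\sqrt{-1}$ non-collinear, so HYM holds while dHYM fails for every $\hat{\Theta}$); and take $\mathbf{E}_3 = \mathcal{L}_{2\varpi_1+6\varpi_2}\oplus\mathcal{L}_{3\varpi_1+4\varpi_2}$ (both have $pq = 12$, forcing $Z_1 = -480$ and $Z_2 = -390$ real and negative with common argument $\pi$, while $p+q = 8\neq 7$ gives unequal slopes: dHYM holds but HYM fails for every $c$).

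Finally, the stability claims follow immediately: a Whitney sum of line bundles of equal slope is polystable because line bundles are slope-stable, so $\mathbf{E}_1,\mathbf{E}_2$ are slope-polystable, whereas in $\mathbf{E}_3$ the summand of larger degree is a destabilizing subbundle, so $\mathbf{E}_3$ is slope-unstable. I expect the main obstacle to be the reduction step rather than the search: one must compute the invariant Chern curvature of $\mathcal{L}_\lambda$ explicitly in Lie-theoretic terms and verify that the invariant representatives make the Lagrangian phase of each block genuinely constant, so that the form-valued equation (\ref{dHYMhigher}) — not merely its integral — collapses to the algebraic condition on the $Z_i$. This is precisely where the description of invariant $(1,1)$-forms from \cite{correa2023deformed} and the central-charge criterion are needed; once they are in place, the three examples are a short finite verification.
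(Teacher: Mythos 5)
Your proposal is correct and follows essentially the same route as the paper: realize ${\mathbbm{P}}(T_{{\mathbbm{P}^{2}}})$ as ${\rm{SL}}_{3}(\mathbbm{C})/B$, take block-diagonal invariant connections on Whitney sums of homogeneous line bundles, use invariance to collapse the form-valued dHYM equation to a matching condition on the (constant) phases of the blocks and the HYM equation to equality of slopes, and then exhibit explicit weights realizing each of the three combinations --- indeed your ${\bf{E}}_{3}$ coincides with the paper's choice $\big(\mathscr{O}_{\alpha_{1}}(2)\otimes\mathscr{O}_{\alpha_{2}}(6)\big)\oplus\big(\mathscr{O}_{\alpha_{1}}(3)\otimes\mathscr{O}_{\alpha_{2}}(4)\big)$. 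The only cosmetic differences are that you phrase the phase condition through the closed-form central charges $Z=\big(48-6(p+q)^{2}-12pq\big)+3\sqrt{-1}(p+q)(12-pq)$ rather than the paper's $\arctan$-sum for the Lagrangian phase (these are equivalent via Theorem \ref{ProofTheo1}), and you pick slightly different, equally valid summands for ${\bf{E}}_{1}$ and ${\bf{E}}_{2}$ (the paper uses degree-zero bundles $\mathscr{O}_{\alpha_{1}}(\ell)\otimes\mathscr{O}_{\alpha_{2}}(-\ell)$ for ${\bf{E}}_{1}$ and $\mathscr{O}_{\alpha_{1}}(2)\otimes\mathscr{O}_{\alpha_{2}}(-1)$, $\mathscr{O}_{\alpha_{1}}(3)\otimes\mathscr{O}_{\alpha_{2}}(-2)$ for ${\bf{E}}_{2}$).
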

In order to prove the above result, we construct explicitly the Hermitian connections satisfying the aforementioned properties. In particular, from item (4) of the above theorem, we obtain the first explicit non-trivial example of dHYM connection on a higher rank slope-unstable holomorphic vector bundle over a Fano threefold. The first non-trivial solutions to the dHYM equation and Z-critical equation in a higher rank (semistable) holomorphic vector bundle were provided in \cite{dervan2020z} for $X = {\mathbbm{P}^{2}}$, results related to the existence of dHYM connections on slope-unstable holomorphic vector bundles are not known so far. It is worth mentioning that our construction is independent of the results provided in \cite{dervan2020z}. 

Given a rational homogeneous variety $X_{P}$, let $\omega_{0}$ be a $G$-invariant K\"{a}hler metric on $X_{P}$. In this setting, for every Hermitian holomorphic vector bundle $({\bf{E}},{\bf{h}}) \to (X_{P},\omega_{0})$, denoting by $\nabla$ the associated Chern connection, and considering the representative ${\rm{ch}}_{k}({\bf{E}},\nabla)$ of the cohomology class defined by the $k$-th Chern character ${\rm{ch}}_{k}({\bf{E}}) \in H^{2k}(X_{P},\mathbbm{C})$, we define the {\textit{central charge}}
\begin{equation}
\label{Centralcharge}
Z_{[\omega_{0}]}({\bf{E}}) = -\int_{X_{P}}{\rm{e}}^{-\sqrt{-1}[\omega_{0}]}{\rm{ch}}({\bf{E}}) = -\sum_{j=0}^{n}\frac{(-\sqrt{-1})^{j}}{j!}\int_{X_{P}}\omega_{0}^{j} \wedge {\rm{ch}}_{n-j}({\bf{E}},\nabla).
\end{equation}
As mentioned in \cite{Collins2020}, the complex number $Z_{[\omega_{0}]}(-)$ resembles various notions of central charge appearing in the study of stability conditions in several physical and mathematical theories, see for instance \cite{douglas2001a, douglas2001b,douglas2002c} and \cite{thomas2002stability}. In the particular case that ${\bf{E}}$ is a line bundle, it is conjectured \cite[Conjecture 1.3]{MR4479717} that the existence of a dHYM connection $\nabla$ on ${\bf{E}}$ should be equivalent to the Bridgeland stability \cite{Bridgeland2007stability} of ${\bf{E}}$, for an introduction to Bridgeland stability we suggest \cite{Macri2017lectures}. We have by now several results related to algebraic conditions involving certain central charges and intersection numbers with the existence of the dHYM connections on holomorphic line bundles, e.g. \cite{Collins2020}, \cite{Chen2021j}, \cite{datar2021numerical}, \cite{chu2021nakai}. Motivated by these results and by the ideas introduced in \cite{correa2023deformed}, we derive a sufficient algebraic condition in terms of the central charge given in Eq. (\ref{Centralcharge}) for the existence of dHYM connections on Whitney sum of holomorphic line bundles over rational homogeneous varieties. More precisely, we prove the following.
\begin{thm}
\label{TheoB}
Given ${\bf{E}}, {\bf{F}} \in {\rm{Pic}}(X_{P})$, if 
\begin{equation}
{\rm{Im}}\bigg ( \frac{Z_{[\omega_{0}]}({\bf{E}})}{Z_{[\omega_{0}]}({\bf{F}})} \bigg ) = 0,
\end{equation}
then ${\bf{E}} \oplus {\bf{F}}$ admits a dHYM instanton.
\end{thm}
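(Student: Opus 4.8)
\emph{Proof proposal.} The plan is to produce the instanton explicitly by equipping ${\bf{E}}\oplus{\bf{F}}$ with the block-diagonal Hermitian metric ${\bf{h}} = {\bf{h}}_{{\bf{E}}}\oplus {\bf{h}}_{{\bf{F}}}$, where ${\bf{h}}_{{\bf{E}}},{\bf{h}}_{{\bf{F}}}$ are the $G$-invariant Hermitian structures whose Chern curvatures represent $c_{1}({\bf{E}})$, $c_{1}({\bf{F}})$ by the unique $G$-invariant real $(1,1)$-forms $\chi_{{\bf{E}}},\chi_{{\bf{F}}}$. The first step is to recall from \cite{correa2023deformed} that on a rational homogeneous variety such invariant representatives have \emph{constant} Lagrangian phase. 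Writing $\omega_{0} - \frac{1}{2\pi}F_{\nabla_{\bullet}} = \omega_{0} + \sqrt{-1}\chi_{\bullet}$ and simultaneously diagonalizing the invariant pair $(\omega_{0},\chi_{\bullet})$, one gets $(\omega_{0} + \sqrt{-1}\chi_{\bullet})^{n} = \rho_{\bullet}\,\omega_{0}^{n}$ for a constant $\rho_{\bullet} = \prod_{j}(1 + \sqrt{-1}\lambda_{j}^{\bullet}) \in \mathbbm{C}$ whose argument is exactly the constant Lagrangian phase $\Theta_{\omega_{0}}(\chi_{\bullet}) = \hat{\Theta}(\bullet)$ of Eq.~(\ref{dHYMeq2v}), where $\lambda_{j}^{\bullet}$ are the eigenvalues of $\omega_{0}^{-1}\circ\chi_{\bullet}$.

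Since the Chern connection of the diagonal metric is block diagonal, the matrix-valued form $\omega_{0}\otimes\mathbbm{1} - \frac{1}{2\pi}F_{\nabla}$ is diagonal, so both its $n$-th power and its trace split over the two summands. Hence Eq.~(\ref{dHYMhigher}) for ${\bf{E}}\oplus{\bf{F}}$ is equivalent to the pair of scalar identities ${\rm{Im}}\big(e^{-\sqrt{-1}\hat{\Theta}({\bf{E}}\oplus{\bf{F}})}\rho_{{\bf{E}}}\big) = 0$ and ${\rm{Im}}\big(e^{-\sqrt{-1}\hat{\Theta}({\bf{E}}\oplus{\bf{F}})}\rho_{{\bf{F}}}\big) = 0$ sharing the common phase, while Eq.~(\ref{phaseanglehigherbundle}) forces $\hat{\Theta}({\bf{E}}\oplus{\bf{F}}) = {\rm{Arg}}(\rho_{{\bf{E}}} + \rho_{{\bf{F}}})$. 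These are simultaneously solvable precisely when ${\rm{Arg}}(\rho_{{\bf{E}}}) \equiv {\rm{Arg}}(\rho_{{\bf{F}}}) \pmod{\pi}$, i.e. when $\hat{\Theta}({\bf{E}}) \equiv \hat{\Theta}({\bf{F}}) \pmod{\pi}$.

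It then remains to translate this phase-matching condition into the central-charge hypothesis. A direct computation from Eq.~(\ref{Centralcharge}) in the line-bundle case, using $(\chi_{\bullet} - \sqrt{-1}\omega_{0})^{n} = (-\sqrt{-1})^{n}(\omega_{0} + \sqrt{-1}\chi_{\bullet})^{n}$, yields $Z_{[\omega_{0}]}(\bullet) = -\frac{(-\sqrt{-1})^{n}}{n!}\,\rho_{\bullet}\int_{X_{P}}\omega_{0}^{n}$, whence ${\rm{Arg}}\big(Z_{[\omega_{0}]}(\bullet)\big) = \hat{\Theta}(\bullet) + \big(\pi - \tfrac{n\pi}{2}\big) \pmod{2\pi}$ with an additive constant independent of the bundle. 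Consequently ${\rm{Im}}\big(Z_{[\omega_{0}]}({\bf{E}})/Z_{[\omega_{0}]}({\bf{F}})\big) = 0$ holds if and only if $\hat{\Theta}({\bf{E}}) \equiv \hat{\Theta}({\bf{F}}) \pmod{\pi}$, which is exactly the condition isolated above. Writing $\rho_{{\bf{E}}} = r_{{\bf{E}}}e^{\sqrt{-1}\theta}$ and $\rho_{{\bf{F}}} = r_{{\bf{F}}}e^{\sqrt{-1}\theta}$ with $r_{\bullet}\in\mathbbm{R}\setminus\{0\}$ then gives $\rho_{{\bf{E}}} + \rho_{{\bf{F}}} = (r_{{\bf{E}}} + r_{{\bf{F}}})e^{\sqrt{-1}\theta}$, whose argument agrees with $\theta$ modulo $\pi$, so the induced phase $\hat{\Theta}({\bf{E}}\oplus{\bf{F}})$ is consistent and the diagonal connection is the desired instanton. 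The only points requiring care are the bookkeeping of the $\pmod{\pi}$ (rather than $\pmod{2\pi}$) ambiguity—harmless because ${\rm{Im}}\big(e^{-\sqrt{-1}\hat{\Theta}}(\cdots)\big) = 0$ is invariant under $\hat{\Theta}\mapsto\hat{\Theta}+\pi$—and the non-degeneracy $r_{{\bf{E}}}+r_{{\bf{F}}}\neq 0$ needed for $\hat{\Theta}({\bf{E}}\oplus{\bf{F}})$ to be defined.
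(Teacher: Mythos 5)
Your proposal is correct and follows essentially the same route as the paper's proof: a block-diagonal Chern connection built from the $G$-invariant metrics on the summands, the observation that $(\omega_{0}+\sqrt{-1}\chi_{\bullet})^{n}$ is a constant multiple $\rho_{\bullet}$ of $\omega_{0}^{n}$ with $\mathrm{Arg}(\rho_{\bullet})$ the constant Lagrangian phase, and the identification of the central-charge hypothesis with the matching of the phases $\hat{\Theta}({\bf{E}})$ and $\hat{\Theta}({\bf{F}})$. Your explicit tracking of the $\bmod\ \pi$ (rather than $\bmod\ 2\pi$) ambiguity and of the non-degeneracy $\rho_{{\bf{E}}}+\rho_{{\bf{F}}}\neq 0$ is in fact slightly more careful than the paper's argument, which states the phase-matching as a $2\pi\mathbbm{Z}$ condition.
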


The above result allows us to construct examples of higher rank dHYM connections from elements of ${\rm{Pic}}(X_{P})$. Notice that, since ${\rm{ch}}({\bf{E}}) = {\rm{e}}^{c_{1}({\bf{E}})}$, $\forall {\bf{E}} \in {\rm{Pic}}(X_{P})$, it follows that
\begin{equation}
Z_{[\omega_{0}]}({\bf{E}}) = -\frac{(-\sqrt{-1})^{n}}{n!}\int_{X_{P}}\big([\omega_{0}] + \sqrt{-1}c_{1}({\bf{E}})\big)^{n}, \ \ \forall {\bf{E}} \in {\rm{Pic}}(X_{P}).
\end{equation}
Following \cite{correa2023deformed}, the integral which appears on the right-hand side of the above equation can be explicitly computed in concrete cases using tools from Lie theory. As an application of this last fact, and the ideas introduced in Theorem \ref{TheoA} and Theorem \ref{TheoB}, we explore the relationship between dHYM connections and Z-critical connections \cite{dervan2020z}. More precisely, in the setting of Eq. (\ref{Centralcharge}), considering the ${\rm{End}}({\bf{E}})$-valued $(n,n)$-form
\begin{equation}
Z_{\omega_{0}}({\bf{E}},\nabla) := -\sum_{j=0}^{n}\frac{(-\sqrt{-1})^{j}}{j!}\omega_{0}^{j} \wedge \widehat{{\rm{ch}}_{n-j}({\bf{E}},\nabla)},
\end{equation}
such that\footnote{Notice that ${\rm{tr}}(\widehat{{\rm{ch}}({\bf{E}},\nabla)})$ is a representative for the cohomology class defined by the Chern character ${\rm{ch}}({\bf{E}})$.} 
\begin{equation}
\widehat{{\rm{ch}}({\bf{E}},\nabla)} := \exp \bigg ( \frac{\sqrt{-1}}{2\pi}F_{\nabla}\bigg ) = \widehat{{\rm{ch}}_{0}({\bf{E}},\nabla)} + \widehat{{\rm{ch}}_{1}({\bf{E}},\nabla)} + \cdots + \widehat{{\rm{ch}}_{n}({\bf{E}},\nabla)}, 
\end{equation}
we say that $\nabla$ is a $Z$-critical connection \cite{dervan2020z} if
\begin{equation}
{\rm{Im}}\Big ({\rm{e}}^{-\sqrt{-1}\varphi({\bf{E}})}Z_{\omega_{0}}({\bf{E}},\nabla) \Big) = 0,
\end{equation}
such that $\varphi({\bf{E}}) = {\rm{Arg}}\big(Z_{[\omega_{0}]}({\bf{E}})\big )\ ({\rm{mod}} \ 2\pi)$. In this setting, we prove the following.
\begin{thm}
\label{theoC}
Under the hypotheses of Theorem \ref{TheoA}, for every integer $r > 1$, there exists a Hermitian holomorphic vector bundle $({\bf{E}},{\bf{h}}) \to ({\mathbbm{P}}(T_{{\mathbbm{P}^{2}}}),\omega_{0})$, such that $\rank({\bf{E}}) = r$, and the following hold:
\begin{enumerate}
\item ${\bf{E}}$ is slope-unstable and $h^{0}({\mathbbm{P}}(T_{{\mathbbm{P}^{2}}}),{\rm{End}}({\bf{E}})) > 1$;
\item considering $\varphi({\bf{E}}) = {\rm{Arg}}\big(Z_{[\omega_{0}]}({\bf{E}})\big )\ ({\rm{mod}} \ 2\pi)$, we have
\begin{equation}
\label{Zcritical}
{\rm{Im}}\Big ({\rm{e}}^{-\sqrt{-1}\varphi({\bf{E}})}Z_{\omega_{0}}({\bf{E}},\nabla) \Big) = 0,
\end{equation}
\end{enumerate}
where $\nabla$ is the Chern connection associated with ${\bf{h}}$.
\end{thm}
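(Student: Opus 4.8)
The plan is to reduce the $Z$-critical equation (\ref{Zcritical}) to the higher-rank dHYM equation (\ref{dHYMhigher}) by means of a universal pointwise identity relating $Z_{\omega_0}({\bf{E}},\nabla)$ to the dHYM form, and then to realize the desired bundle as a Whitney sum, taken with multiplicities, of the two slope-distinct line bundles underlying the unstable bundle ${\bf{E}}_3$ produced in Theorem \ref{TheoA}.

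First I would establish, for an arbitrary Hermitian holomorphic bundle $({\bf{E}},{\bf{h}})$ with Chern connection $\nabla$ on an $n$-dimensional base, the identity
\begin{equation}
Z_{\omega_0}({\bf{E}},\nabla) = -\frac{(-\sqrt{-1})^{n}}{n!}\Big(\omega_0\otimes\mathbbm{1}_{{\bf{E}}} - \tfrac{1}{2\pi}F_{\nabla}\Big)^{n}.
\end{equation}
This follows by expanding $(\omega_0\otimes\mathbbm{1}_{{\bf{E}}} - \tfrac{1}{2\pi}F_\nabla)^n = (\omega_0\otimes\mathbbm{1}_{{\bf{E}}} + \sqrt{-1}\,\tfrac{\sqrt{-1}}{2\pi}F_\nabla)^n$ with the binomial theorem, legitimate because $\omega_0\otimes\mathbbm{1}_{{\bf{E}}}$ is scalar and of even degree, and substituting $(\tfrac{\sqrt{-1}}{2\pi}F_\nabla)^{k} = k!\,\widehat{{\rm{ch}}_{k}({\bf{E}},\nabla)}$ after collecting the powers of $\sqrt{-1}$. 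Taking traces and integrating gives $Z_{[\omega_0]}({\bf{E}}) = -\tfrac{(-\sqrt{-1})^n}{n!}\int_{X_P}{\rm{tr}}(\omega_0\otimes\mathbbm{1}_{{\bf{E}}} - \tfrac{1}{2\pi}F_\nabla)^n$, so that $\varphi({\bf{E}}) = \theta_0 + \hat{\Theta}({\bf{E}})\ ({\rm{mod}}\ 2\pi)$, where $\theta_0 := {\rm{Arg}}(-\tfrac{(-\sqrt{-1})^n}{n!})$. Since $-\tfrac{(-\sqrt{-1})^n}{n!} = \tfrac{1}{n!}{\rm{e}}^{\sqrt{-1}\theta_0}$, multiplying the first identity by ${\rm{e}}^{-\sqrt{-1}\varphi({\bf{E}})}$ shows that (\ref{Zcritical}) holds for $\nabla$ if and only if ${\rm{Im}}({\rm{e}}^{-\sqrt{-1}\hat{\Theta}({\bf{E}})}(\omega_0\otimes\mathbbm{1}_{{\bf{E}}}-\tfrac{1}{2\pi}F_\nabla)^n)=0$, i.e. if and only if $\nabla$ is a dHYM instanton. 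Thus on $({\mathbbm{P}}(T_{{\mathbbm{P}^2}}),\omega_0)$ the two notions coincide in every rank.

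With this equivalence in hand, for each integer $r>1$ I would write $r=a+b$ with $a,b\geq 1$ and set ${\bf{E}} := {\bf{L}}_1^{\oplus a}\oplus{\bf{L}}_2^{\oplus b}$, where ${\bf{L}}_1,{\bf{L}}_2\in{\rm{Pic}}({\mathbbm{P}}(T_{{\mathbbm{P}^2}}))$ are the slope-distinct line bundles whose Whitney sum realizes the slope-unstable bundle ${\bf{E}}_3$ of Theorem \ref{TheoA}; equip each summand with its unique ${\rm{SU}}(3)$-invariant Chern connection $\nabla_i$ and take $\nabla=\bigoplus\nabla_i$. By \cite{correa2023deformed} each $\nabla_i$ solves the dHYM equation pointwise with some phase $\hat{\Theta}_i$, and the defining property of ${\bf{E}}_3$, equivalently the central-charge alignment ${\rm{Im}}(Z_{[\omega_0]}({\bf{L}}_1)/Z_{[\omega_0]}({\bf{L}}_2))=0$ of Theorem \ref{TheoB}, forces $\hat{\Theta}_1=\hat{\Theta}_2=:\hat{\Theta}_0\ ({\rm{mod}}\ 2\pi)$. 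Because $F_\nabla$ is block-diagonal, $(\omega_0\otimes\mathbbm{1}_{{\bf{E}}}-\tfrac{1}{2\pi}F_\nabla)^3$ is block-diagonal with $i$-th block $(\omega_0+\sqrt{-1}c_1({\bf{L}}_i,\nabla_i))^3$, and $\hat{\Theta}({\bf{E}})=\hat{\Theta}_0$; hence the imaginary part vanishes block by block, $\nabla$ is a dHYM instanton, and by the previous paragraph it is $Z$-critical, yielding (\ref{Zcritical}).

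It then remains to verify the structural properties. Since $a,b\geq 1$, the slope $\mu_{[\omega_0]}({\bf{E}})$ is a strict convex combination of the distinct slopes $\mu_{[\omega_0]}({\bf{L}}_1)\neq\mu_{[\omega_0]}({\bf{L}}_2)$, so the summand of larger slope destabilizes ${\bf{E}}$, proving slope-instability; moreover $h^{0}({\mathbbm{P}}(T_{{\mathbbm{P}^2}}),{\rm{End}}({\bf{E}}))\geq a^2+b^2>1$ from the block-diagonal endomorphisms, so ${\bf{E}}$ is non-simple, while $\rank({\bf{E}})=a+b=r$. Reusing ${\bf{L}}_1,{\bf{L}}_2$ with multiplicities makes the construction uniform in $r$. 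The main obstacle is conceptual rather than computational: it lies in identifying (\ref{Zcritical}) with (\ref{dHYMhigher}) through the curvature identity and the phase shift $\theta_0$, and in checking that the invariant direct-sum connection satisfies the \emph{pointwise} dHYM equation, which rests on the explicit Lie-theoretic description of the Lagrangian phase from \cite{correa2023deformed} together with the central-charge alignment guaranteed by the construction of ${\bf{E}}_3$.
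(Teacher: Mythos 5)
Your proposal is correct and follows essentially the same route as the paper: the binomial identity $Z_{\omega_{0}}({\bf{E}},\nabla) = -\tfrac{(-\sqrt{-1})^{3}}{3!}\big(\omega_{0}\otimes\mathbbm{1}_{{\bf{E}}}-\tfrac{1}{2\pi}F_{\nabla}\big)^{3}$ reducing $Z$-criticality to the dHYM equation, followed by a block-diagonal Whitney sum of the two phase-aligned, slope-distinct line bundles $\mathscr{O}_{\alpha_{1}}(2)\otimes\mathscr{O}_{\alpha_{2}}(6)$ and $\mathscr{O}_{\alpha_{1}}(3)\otimes\mathscr{O}_{\alpha_{2}}(4)$ underlying ${\bf{E}}_{3}$. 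The only cosmetic difference is that you allow general multiplicities $r=a+b$ where the paper takes $1+(r-1)$.
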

It is worth to point out that Eq. (\ref{Zcritical}) is equivalent to Eq. (\ref{dHYMhigher}). Therefore, from the above theorem we also obtain several new examples of dHYM connections on higher rank holomorphic vector bundles. The proof of the above result provides a constructive method to obtain examples of Z-critical connections on slope-unstable not simple holomorphic vector bundles over ${\mathbbm{P}}(T_{{\mathbbm{P}^{2}}})$. In particular, Theorem \ref{theoC} provides the first example in the literature of a Z-critical connection on a slope-unstable not simple holomorphic vector bundle over a Fano threefold.

{\bf{Organization of the paper.}} In Section 2, we review some basic generalities on flag varieties. In Section 3, we prove Theorem \ref{TheoA}, the proof is constructive and can be summarized in Lemma \ref{L1} (Section \ref{dHYM_polystable}), Lemma \ref{L2} (Section \ref{dHYM_polystable}), and Lemma \ref{L3} (Section \ref{Unstableconstruction}). In Section \ref{Proof_theo_B_C}, we prove Theorem \ref{TheoB} and Theorem \ref{theoC}.

{\bf{Acknowledgments.}}  The author would like to thank Professor Lino Grama for very helpful conversations. E. M. Correa is partially supported by FAEPEX/Unicamp grant 2528/22 and by S\~{a}o Paulo Research Foundation FAPESP grant 2022/10429-3.

\section{Generalities on flag varieties}\label{generalities}
In this section, we review some basic generalities about flag varieties. For more details on the subject presented in this section, we suggest \cite{Akhiezer}, \cite{Flagvarieties}, \cite{HumphreysLAG}, \cite{BorelRemmert}.
\subsection{The Picard group of flag varieties}
\label{subsec3.1}
Let $G^{\mathbbm{C}}$ be a connected, simply connected, and complex Lie group with simple Lie algebra $\mathfrak{g}^{\mathbbm{C}}$. By fixing a Cartan subalgebra $\mathfrak{h}$ and a simple root system $\Delta \subset \mathfrak{h}^{\ast}$, we have a decomposition of $\mathfrak{g}^{\mathbbm{C}}$ given by
\begin{center}
$\mathfrak{g}^{\mathbbm{C}} = \mathfrak{n}^{-} \oplus \mathfrak{h} \oplus \mathfrak{n}^{+}$, 
\end{center}
where $\mathfrak{n}^{-} = \sum_{\alpha \in \Phi^{-}}\mathfrak{g}_{\alpha}$ and $\mathfrak{n}^{+} = \sum_{\alpha \in \Phi^{+}}\mathfrak{g}_{\alpha}$, here we denote by $\Phi = \Phi^{+} \cup \Phi^{-}$ the root system associated with the simple root system $\Delta \subset \mathfrak{h}^{\ast}$. Let us denote by $\kappa$ the Cartan-Killing form of $\mathfrak{g}^{\mathbbm{C}}$. From this, for every  $\alpha \in \Phi^{+}$, we have $h_{\alpha} \in \mathfrak{h}$, such  that $\alpha = \kappa(\cdot,h_{\alpha})$, and we can choose $x_{\alpha} \in \mathfrak{g}_{\alpha}$ and $y_{-\alpha} \in \mathfrak{g}_{-\alpha}$, such that $[x_{\alpha},y_{-\alpha}] = h_{\alpha}$. From these data, we can define a Borel subalgebra\footnote{A maximal solvable subalgebra of $\mathfrak{g}^{\mathbbm{C}}$.} by setting $\mathfrak{b} = \mathfrak{h} \oplus \mathfrak{n}^{+}$. 

\begin{remark}
In the above setting, $\forall \phi \in \mathfrak{h}^{\ast}$, we also denote $\langle \phi, \alpha \rangle = \phi(h_{\alpha})$, $\forall \alpha \in \Phi^{+}$.
\end{remark}

Now we consider the following result (see for instance \cite{Flagvarieties}, \cite{HumphreysLAG}):
\begin{theorem}
Any two Borel subgroups are conjugate.
\end{theorem}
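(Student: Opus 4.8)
The plan is to deduce the statement from Borel's fixed point theorem, which asserts that any connected solvable algebraic group acting on a nonempty complete variety admits a fixed point. The crux is therefore to produce, for a suitably chosen Borel subgroup $B$, a complete homogeneous space $G^{\mathbbm{C}}/B$ on which an arbitrary Borel subgroup can be made to act.

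First I would fix a Borel subgroup $B \subset G^{\mathbbm{C}}$ of \emph{maximal} dimension among all connected solvable subgroups. The key technical step is to show that the quotient $G^{\mathbbm{C}}/B$ is a projective (hence complete) variety. To this end I would invoke Chevalley's theorem to realize $B$ as the stabilizer of a line $[L] \in \mathbbm{P}(V)$ for some finite-dimensional rational representation $G^{\mathbbm{C}} \to \GL(V)$, so that the orbit map identifies $G^{\mathbbm{C}}/B$ with the orbit $G^{\mathbbm{C}} \cdot [L]$. Because the dimension of a stabilizer and the dimension of the corresponding orbit are complementary, the maximality of $\dim B$ forces this orbit to have minimal dimension; by the closed orbit lemma an orbit of minimal dimension is closed, so $G^{\mathbbm{C}} \cdot [L]$ is a closed subvariety of $\mathbbm{P}(V)$ and thus complete.

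With completeness in hand, given an arbitrary Borel subgroup $B'$, I would let $B'$ act on $G^{\mathbbm{C}}/B$ by left translation. Since $B'$ is connected and solvable and $G^{\mathbbm{C}}/B$ is complete and nonempty, Borel's fixed point theorem yields a fixed point $gB$, which means precisely that $g^{-1}B'g \subseteq B$. As $g^{-1}B'g$ is itself a maximal connected solvable subgroup contained in the connected solvable group $B$, the two must coincide, giving $B' = gBg^{-1}$. In particular this also shows a posteriori that every Borel subgroup has the same (maximal) dimension, so no generality was lost in the initial choice of $B$.

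The main obstacle is the completeness of $G^{\mathbbm{C}}/B$: both Borel's fixed point theorem and the final conjugacy conclusion follow rather formally once $G^{\mathbbm{C}}/B$ is known to be complete, but establishing that completeness is where genuine representation-theoretic input is required (Chevalley's theorem together with the closed orbit lemma, or, alternatively, an induction on $\dim G^{\mathbbm{C}}$). A secondary subtlety to handle with care is the maximality argument identifying $g^{-1}B'g$ with $B$, which relies on $B'$ being maximal connected solvable rather than merely solvable.
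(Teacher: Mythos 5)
The paper does not prove this statement at all: it is quoted as a standard result with a pointer to the references (\cite{Flagvarieties}, \cite{HumphreysLAG}), so your proposal can only be measured against the classical argument those sources contain. Your overall strategy is exactly that classical one (Borel's fixed point theorem applied to $B'$ acting on a complete quotient $G^{\mathbbm{C}}/B$), and the final conjugacy step, including the maximality argument identifying $g^{-1}B'g$ with $B$, is correct.

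There is, however, a genuine gap in the step where you establish completeness of $G^{\mathbbm{C}}/B$. Chevalley's theorem realizes $B$ as the stabilizer of a line $[L] \in \mathbbm{P}(V)$, but the closed orbit lemma only lets you conclude that $G^{\mathbbm{C}} \cdot [L]$ is closed if that orbit has \emph{minimal} dimension among the orbits in its closure, and the maximality of $\dim B$ among connected \emph{solvable} subgroups does not give this: the stabilizers of boundary points of the orbit in $\mathbbm{P}(V)$ have strictly larger dimension than $B$, but they need not be solvable, so no contradiction with the choice of $B$ arises and the orbit may a priori fail to be closed (points of $\mathbbm{P}(V)$ can even be fixed by all of $G^{\mathbbm{C}}$). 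The standard repair, which is the actual content of the proof in Humphreys, is to use the Lie--Kolchin theorem to extend $L$ to a full flag in $V$ stabilized by $B$ and to work instead with the orbit of that flag in the (complete) variety of full flags of $V$: there every isotropy group stabilizes a full flag, hence is solvable, so its identity component has dimension at most $\dim B$, every orbit has dimension at least $\dim G^{\mathbbm{C}} - \dim B$, and the orbit of the chosen flag really is of minimal dimension, hence closed and complete. Without this detour through the flag variety (or an equivalent device ensuring all relevant stabilizers are solvable), the completeness claim is unsupported.
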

From the result above, given a Borel subgroup $B \subset G^{\mathbbm{C}}$, up to conjugation, we can always suppose that $B = \exp(\mathfrak{b})$. In this setting, given a parabolic Lie subgroup\footnote{A Lie subgroup which contains some Borel subgroup.} $P \subset G^{\mathbbm{C}}$, without loss of generality, we can suppose that
\begin{center}
$P  = P_{I}$, \ for some \ $I \subset \Delta$,
\end{center}
where $P_{I} \subset G^{\mathbbm{C}}$ is the parabolic subgroup which integrates the Lie subalgebra 
\begin{center}

$\mathfrak{p}_{I} = \mathfrak{n}^{+} \oplus \mathfrak{h} \oplus \mathfrak{n}(I)^{-}$, \ with \ $\mathfrak{n}(I)^{-} = \displaystyle \sum_{\alpha \in \langle I \rangle^{-}} \mathfrak{g}_{\alpha}$. 

\end{center}
By definition, we have that $P_{I} = N_{G^{\mathbbm{C}}}(\mathfrak{p}_{I})$, where $N_{G^{\mathbbm{C}}}(\mathfrak{p}_{I})$ is the normalizer in  $G^{\mathbbm{C}}$ of $\mathfrak{p}_{I} \subset \mathfrak{g}^{\mathbbm{C}}$, see for instance \cite[\S 3.1]{Akhiezer}. In what follows, it will be useful for us to consider the following basic chain of Lie subgroups

\begin{center}

$T^{\mathbbm{C}} \subset B \subset P \subset G^{\mathbbm{C}}$.

\end{center}
For each element in the aforementioned chain of Lie subgroups we have the following characterization: 

\begin{itemize}

\item $T^{\mathbbm{C}} = \exp(\mathfrak{h})$;  \ \ (complex torus)

\item $B = N^{+}T^{\mathbbm{C}}$, where $N^{+} = \exp(\mathfrak{n}^{+})$; \ \ (Borel subgroup)

\item $P = P_{I} = N_{G^{\mathbbm{C}}}(\mathfrak{p}_{I})$, for some $I \subset \Delta \subset \mathfrak{h}^{\ast}$. \ \ (parabolic subgroup)

\end{itemize}
Now let us recall some basic facts about the representation theory of $\mathfrak{g}^{\mathbbm{C}}$, a detailed exposition on the subject can be found in \cite{Humphreys}. For every $\alpha \in \Phi$, we set 
$$\alpha^{\vee} := \frac{2}{\langle \alpha, \alpha \rangle}\alpha.$$ 
The fundamental weights $\{\varpi_{\alpha} \ | \ \alpha \in \Delta\} \subset \mathfrak{h}^{\ast}$ of $(\mathfrak{g}^{\mathbbm{C}},\mathfrak{h})$ are defined by requiring that $\langle \varpi_{\alpha}, \beta^{\vee} \rangle= \delta_{\alpha \beta}$, $\forall \alpha, \beta \in \Delta$. We denote by 
$$\Lambda^{+} = \bigoplus_{\alpha \in \Delta}\mathbbm{Z}_{\geq 0}\varpi_{\alpha},$$ 
the set of integral dominant weights of $\mathfrak{g}^{\mathbbm{C}}$. Let $V$ be an arbitrary finite dimensional $\mathfrak{g}^{\mathbbm{C}}$-module. By considering its weight space decomposition
\begin{center}
$\displaystyle{V = \bigoplus_{\mu \in \Phi(V)}V_{\mu}},$ \ \ \ \ 
\end{center}
such that $V_{\mu} = \{v \in V \ | \ h \cdot v = \mu(h)v, \ \forall h \in \mathfrak{h}\} \neq \{0\}$, $\forall \mu \in \Phi(V) \subset \mathfrak{h}^{\ast}$, we have the following definition.

\begin{definition}
A highest weight vector (of weight $\lambda$) in a $\mathfrak{g}^{\mathbbm{C}}$-module $V$ is a non-zero vector $v_{\lambda}^{+} \in V_{\lambda}$, such that 
\begin{center}
$x \cdot v_{\lambda}^{+} = 0$, \ \ \ \ \ ($\forall x \in \mathfrak{n}^{+}$).
\end{center}
A weight $\lambda \in \Phi(V)$ associated with a highest weight vector is called highest weight of $V$.
\end{definition}

From above, we consider the following standard results.

\begin{theorem}
Every finite dimensional irreducible $\mathfrak{g}^{\mathbbm{C}}$-module $V$ admits a highest weight vector $v_{\lambda}^{+}$. Moreover, $v_{\lambda}^{+}$ is the unique highest weight vector of $V$, up to non-zero scalar multiples. 
\end{theorem}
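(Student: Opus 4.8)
The plan is to prove existence and uniqueness separately, using the weight space decomposition together with the triangular decomposition $\mathfrak{g}^{\mathbbm{C}} = \mathfrak{n}^{-} \oplus \mathfrak{h} \oplus \mathfrak{n}^{+}$ and the Poincar\'{e}--Birkhoff--Witt theorem for the universal enveloping algebra $U(\mathfrak{g}^{\mathbbm{C}})$. Throughout I would use the elementary fact that, for $x \in \mathfrak{g}_{\alpha}$ and $v \in V_{\mu}$, one has $x \cdot v \in V_{\mu + \alpha}$, i.e. a root vector for $\alpha$ shifts weights by $\alpha$. I also fix the partial order on $\mathfrak{h}^{\ast}$ by declaring $\mu \preceq \nu$ whenever $\nu - \mu \in \sum_{\alpha \in \Delta}\mathbbm{Z}_{\geq 0}\alpha$.

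For existence, since $V$ is finite dimensional the set of weights $\Phi(V)$ is finite, so it admits a maximal element $\lambda$ for $\preceq$ (for instance by maximizing an appropriate height function among the weights). Choose any nonzero $v_{\lambda}^{+} \in V_{\lambda}$. For each $\alpha \in \Phi^{+}$ and the corresponding $x_{\alpha} \in \mathfrak{g}_{\alpha}$, the vector $x_{\alpha} \cdot v_{\lambda}^{+}$ lies in $V_{\lambda + \alpha}$; but $\lambda + \alpha \succ \lambda$, so maximality of $\lambda$ forces $\lambda + \alpha \notin \Phi(V)$ and $V_{\lambda + \alpha} = \{0\}$. Hence $x_{\alpha} \cdot v_{\lambda}^{+} = 0$. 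Since $\mathfrak{n}^{+}$ is spanned by such root vectors, $x \cdot v_{\lambda}^{+} = 0$ for all $x \in \mathfrak{n}^{+}$, so $v_{\lambda}^{+}$ is a highest weight vector. Note that this part does not use irreducibility.

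For uniqueness I would use irreducibility crucially. Given a highest weight vector $v_{\lambda}^{+}$ of weight $\lambda$, the subspace $U(\mathfrak{g}^{\mathbbm{C}}) \cdot v_{\lambda}^{+}$ is a nonzero submodule, hence equals $V$. By PBW we may write $U(\mathfrak{g}^{\mathbbm{C}}) = U(\mathfrak{n}^{-})\,U(\mathfrak{h})\,U(\mathfrak{n}^{+})$; since $\mathfrak{n}^{+}$ annihilates $v_{\lambda}^{+}$ and $\mathfrak{h}$ acts on it by the scalar $\lambda$, we obtain $V = U(\mathfrak{n}^{-}) \cdot v_{\lambda}^{+}$. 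Every monomial $y_{-\beta_{1}} \cdots y_{-\beta_{k}}$ with $\beta_{i} \in \Phi^{+}$ strictly lowers the weight, so every weight of $V$ satisfies $\mu \preceq \lambda$, and $V_{\lambda} = \mathbbm{C}\, v_{\lambda}^{+}$ is one dimensional (the only contributing monomial that does not strictly lower the weight is the empty one). Now suppose $w \in V_{\mu}$ is a second highest weight vector. The previous bound gives $\mu \preceq \lambda$; running the identical argument with $w$ in place of $v_{\lambda}^{+}$ — legitimate since $w$ is also a highest weight vector and $V$ is irreducible — yields $V = U(\mathfrak{n}^{-}) \cdot w$ and hence $\lambda \preceq \mu$. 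Therefore $\mu = \lambda$, so $w \in V_{\lambda} = \mathbbm{C}\, v_{\lambda}^{+}$, proving uniqueness up to nonzero scalar.

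The step I expect to be the main conceptual obstacle is the symmetric argument pinning down the weight: producing a highest weight vector in a maximal weight space is immediate, but to conclude that \emph{every} highest weight vector shares the weight $\lambda$ one must exploit that irreducibility forces $V$ to be generated by any single highest weight vector, which simultaneously bounds all weights from above by that vector's weight. Extracting both inequalities $\mu \preceq \lambda$ and $\lambda \preceq \mu$ from the two generation statements, and then invoking one-dimensionality of $V_{\lambda}$, is the crux; the weight-shift rule and the PBW factorization are routine ingredients.
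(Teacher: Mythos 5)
Your proof is correct: the paper states this theorem without proof as a standard result (deferring to the cited reference \cite{Humphreys}), and your argument — existence via a $\preceq$-maximal weight, uniqueness via irreducibility, the PBW factorization $U(\mathfrak{g}^{\mathbbm{C}}) = U(\mathfrak{n}^{-})U(\mathfrak{h})U(\mathfrak{n}^{+})$, and the resulting one-dimensionality of $V_{\lambda}$ — is exactly the classical proof found there. Both halves, including the symmetric comparison $\mu \preceq \lambda$ and $\lambda \preceq \mu$, are carried out correctly.
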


\begin{theorem}
Let $V$ and $W$ be finite dimensional irreducible $\mathfrak{g}^{\mathbbm{C}}$-modules with highest weight $\lambda \in \mathfrak{h}^{\ast}$. Then, $V$ and $W$ are isomorphic.
\end{theorem}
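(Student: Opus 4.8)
The plan is to prove uniqueness by a graph/diagonal argument inside the direct sum $V \oplus W$. By hypothesis (and the preceding theorem), $V$ and $W$ each contain a highest weight vector of weight $\lambda$, say $v_{\lambda}^{+} \in V_{\lambda}$ and $w_{\lambda}^{+} \in W_{\lambda}$. First I would form the $\mathfrak{g}^{\mathbbm{C}}$-module $V \oplus W$ and consider the vector $u := (v_{\lambda}^{+}, w_{\lambda}^{+})$. Since $\mathfrak{n}^{+}$ annihilates each coordinate and $\mathfrak{h}$ acts by the scalar $\lambda(h)$ on each, the vector $u$ is again a highest weight vector of weight $\lambda$ in $V \oplus W$. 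Let $M := U(\mathfrak{g}^{\mathbbm{C}}) \cdot u \subseteq V \oplus W$ be the submodule it generates, where $U(\mathfrak{g}^{\mathbbm{C}})$ denotes the universal enveloping algebra of $\mathfrak{g}^{\mathbbm{C}}$.

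Next I would examine the two coordinate projections $p_{V} \colon M \to V$ and $p_{W} \colon M \to W$, which are $\mathfrak{g}^{\mathbbm{C}}$-module homomorphisms. Each is surjective: $p_{V}(M)$ is a nonzero submodule of $V$, as it contains $v_{\lambda}^{+} = p_{V}(u)$, hence equals $V$ by irreducibility, and likewise $p_{W}(M) = W$. It then remains only to establish that these maps are injective, after which $V \cong M \cong W$ follows formally.

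For injectivity, the key computation is the weight-$\lambda$ space of $M$. By the Poincar\'{e}--Birkhoff--Witt theorem one has $U(\mathfrak{g}^{\mathbbm{C}}) = U(\mathfrak{n}^{-})\,U(\mathfrak{h})\,U(\mathfrak{n}^{+})$, and since $u$ is a highest weight vector this yields $M = U(\mathfrak{n}^{-}) \cdot u$. Every weight occurring in $U(\mathfrak{n}^{-}) \cdot u$ has the form $\lambda - \sum_{\alpha \in \Phi^{+}} n_{\alpha}\alpha$ with $n_{\alpha} \in \mathbbm{Z}_{\geq 0}$, so the weight $\lambda$ is attained only by the constant term of $U(\mathfrak{n}^{-})$; that is, $M_{\lambda} = \mathbbm{C}u$ is one-dimensional. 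Now suppose $\ker p_{V} \neq 0$. Since $\ker p_{V} = M \cap (0 \oplus W)$ is a nonzero submodule sitting inside $0 \oplus W$, its image in $W$ is a nonzero submodule, hence all of $W$ by irreducibility; thus $0 \oplus W \subseteq M$. Subtracting $(0, w_{\lambda}^{+})$ from $u$ then shows $(v_{\lambda}^{+}, 0) \in M$, and the submodule it generates is all of $V \oplus 0$, so $M = V \oplus W$. But then $M_{\lambda} = V_{\lambda} \oplus W_{\lambda}$ contains the two linearly independent vectors $(v_{\lambda}^{+},0)$ and $(0, w_{\lambda}^{+})$, contradicting $\dim_{\mathbbm{C}} M_{\lambda} = 1$. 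Hence $\ker p_{V} = 0$, so $p_{V}$ is an isomorphism; by the symmetric argument $p_{W}$ is an isomorphism as well, and therefore $V \cong W$.

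The main obstacle is the weight-space bookkeeping: the entire argument hinges on pinning down $M_{\lambda}$ exactly, which requires the PBW description $M = U(\mathfrak{n}^{-})\cdot u$ together with the fact that the lowering operators in $\mathfrak{n}^{-}$ strictly decrease the weight by a positive root. Once this one-dimensionality is secured, the surjectivity of the projections is immediate from irreducibility, and the passage to an isomorphism is purely formal.
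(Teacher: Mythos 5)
Your proof is correct. The paper does not actually prove this statement---it is recalled as a standard fact of the representation theory of $\mathfrak{g}^{\mathbbm{C}}$ with a pointer to Humphreys---and your argument (the submodule of $V \oplus W$ generated by the diagonal highest weight vector $(v_{\lambda}^{+}, w_{\lambda}^{+})$, whose $\lambda$-weight space is one-dimensional by the PBW reduction $M = U(\mathfrak{n}^{-})\cdot u$, forcing both coordinate projections to be isomorphisms) is precisely the classical proof found in that reference.
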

\begin{remark}
We will denote by $V(\lambda)$ a finite dimensional irreducible $\mathfrak{g}^{\mathbbm{C}}$-module with highest weight $\lambda \in \mathfrak{h}^{\ast}$.
\end{remark}
\begin{theorem} In the above setting, the following hold:
\begin{itemize}
\item[(1)] If $V$ is a finite dimensional irreducible $\mathfrak{g}^{\mathbbm{C}}$-module with highest weight $\lambda \in \mathfrak{h}^{\ast}$, then $\lambda \in \Lambda^{+}$.

\item[(2)] If $\lambda \in \Lambda^{+}$, then there exists a finite dimensional irreducible $\mathfrak{g}^{\mathbbm{C}}$-module $V$, such that $V = V(\lambda)$. 
\end{itemize}
\end{theorem}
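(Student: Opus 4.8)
\emph{Proof proposal.} The plan is to handle the two assertions by different methods: statement (1), the necessity of dominant integrality, reduces to the representation theory of $\mathfrak{sl}(2,\mathbbm{C})$, while statement (2), the existence of $V(\lambda)$, is obtained by constructing an irreducible quotient of a Verma module and showing it is finite-dimensional. For (1), fix a simple root $\alpha \in \Delta$ and consider the subalgebra $\mathfrak{s}_{\alpha} := \mathbbm{C}x_{\alpha} \oplus \mathbbm{C}h_{\alpha}^{\vee} \oplus \mathbbm{C}y_{-\alpha}$, where $h_{\alpha}^{\vee} = \tfrac{2}{\langle \alpha,\alpha\rangle}h_{\alpha}$; after rescaling $y_{-\alpha}$ one checks that $\{x_{\alpha},h_{\alpha}^{\vee},y_{-\alpha}\}$ is a standard $\mathfrak{sl}(2,\mathbbm{C})$-triple, so $\mathfrak{s}_{\alpha}\cong \mathfrak{sl}(2,\mathbbm{C})$. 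Since $x_{\alpha}\in \mathfrak{n}^{+}$, the highest weight vector satisfies $x_{\alpha}\cdot v_{\lambda}^{+} = 0$, so that $v_{\lambda}^{+}$ is a highest weight vector for the $\mathfrak{s}_{\alpha}$-action whose $h_{\alpha}^{\vee}$-eigenvalue equals $\langle \lambda,\alpha^{\vee}\rangle$. As $V$ is finite-dimensional, so is the $\mathfrak{s}_{\alpha}$-submodule it generates, and by the classification of finite-dimensional $\mathfrak{sl}(2,\mathbbm{C})$-modules the eigenvalue of $h_{\alpha}^{\vee}$ on any vector annihilated by $x_{\alpha}$ is a non-negative integer; applied to $v_{\lambda}^{+}$ this yields $\langle \lambda,\alpha^{\vee}\rangle \in \mathbbm{Z}_{\geq 0}$. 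Since $\{\varpi_{\alpha}\}_{\alpha\in\Delta}$ is a basis of $\mathfrak{h}^{\ast}$ and $\alpha$ was arbitrary, the expansion $\lambda = \sum_{\alpha\in\Delta}\langle \lambda,\alpha^{\vee}\rangle \varpi_{\alpha}$ exhibits $\lambda \in \Lambda^{+}$.

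For (2), given $\lambda \in \Lambda^{+}$ I would let $\mathbbm{C}_{\lambda}$ be the one-dimensional $\mathfrak{b}$-module on which $\mathfrak{h}$ acts by $\lambda$ and $\mathfrak{n}^{+}$ acts by zero, and form the Verma module $M(\lambda) := U(\mathfrak{g}^{\mathbbm{C}})\otimes_{U(\mathfrak{b})}\mathbbm{C}_{\lambda}$, where $U(-)$ denotes the universal enveloping algebra. By the Poincar\'{e}--Birkhoff--Witt theorem, $M(\lambda)$ is free of rank one over $U(\mathfrak{n}^{-})$ with generator $v_{\lambda}^{+}$, so all of its weights lie in $\lambda - \sum_{\alpha\in\Delta}\mathbbm{Z}_{\geq 0}\alpha$, every weight space is finite-dimensional, and the weight $\lambda$ occurs with multiplicity one. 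A standard argument shows that the sum of all submodules not meeting the line $\mathbbm{C}v_{\lambda}^{+}$ is the unique maximal proper submodule $N(\lambda)$, so that the quotient $L(\lambda) := M(\lambda)/N(\lambda)$ is irreducible of highest weight $\lambda$; it then remains to prove that $L(\lambda)$ is finite-dimensional and to set $V = L(\lambda)$.

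The technical heart of the argument is precisely this finiteness, and it is here that the hypothesis $m_{\alpha}:=\langle \lambda,\alpha^{\vee}\rangle \in \mathbbm{Z}_{\geq 0}$ is used. For each $\alpha\in\Delta$, an $\mathfrak{sl}(2,\mathbbm{C})$-computation along the $\mathfrak{s}_{\alpha}$-string through $v_{\lambda}^{+}$ shows that $y_{-\alpha}^{\,m_{\alpha}+1}v_{\lambda}^{+}$ is a singular vector, i.e.\ it is annihilated by $\mathfrak{n}^{+}$ and hence generates a proper submodule of $M(\lambda)$ that maps to zero in $L(\lambda)$. Consequently each $\mathfrak{s}_{\alpha}$ acts locally finitely on $L(\lambda)$, which forces the set of weights of $L(\lambda)$ to be stable under every simple reflection $s_{\alpha}$, and therefore under the full Weyl group $W$. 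A $W$-invariant set of weights contained in the cone $\lambda-\sum_{\alpha\in\Delta}\mathbbm{Z}_{\geq 0}\alpha$ must be finite, and together with the finiteness of the weight multiplicities this yields $\dim_{\mathbbm{C}}L(\lambda)<\infty$. I expect the passage from the vanishing of these singular vectors to the $W$-invariance and finiteness of the weight set to be the main obstacle, exactly as in the classical treatment of \cite{Humphreys}.
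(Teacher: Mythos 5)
The paper does not give a proof of this theorem: it is recalled as a standard fact, with the reader referred to \cite{Humphreys} for details. Your proposal is a correct sketch of precisely the classical argument from that reference --- reduction to $\mathfrak{sl}(2,\mathbbm{C})$-theory via the subalgebras $\mathfrak{s}_{\alpha}$ for the dominant integrality in (1), and the Verma module $M(\lambda)$, its irreducible quotient $L(\lambda)$, the singular vectors $y_{-\alpha}^{\,m_{\alpha}+1}v_{\lambda}^{+}$, and Weyl-group invariance of the weight set for the existence and finite-dimensionality in (2) --- so it coincides with the proof the paper implicitly invokes.
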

From the above theorem, it follows that the map $\lambda \mapsto V(\lambda)$ induces an one-to-one correspondence between $\Lambda^{+}$ and the isomorphism classes of finite dimensional irreducible $\mathfrak{g}^{\mathbbm{C}}$-modules.

\begin{remark} In what follows, it will be useful also to consider the following facts:
\begin{enumerate}
\item[(i)] For all $\lambda \in \Lambda^{+}$, we have $V(\lambda) = \mathfrak{U}(\mathfrak{g}^{\mathbbm{C}}) \cdot v_{\lambda}^{+}$, where $\mathfrak{U}(\mathfrak{g}^{\mathbbm{C}})$ is the universal enveloping algebra of $\mathfrak{g}^{\mathbbm{C}}$;
\item[(ii)] The fundamental representations are defined by $V(\varpi_{\alpha})$, $\alpha \in \Delta$; 
\item[(iii)] For all $\lambda \in \Lambda^{+}$, we have the following equivalence of induced irreducible representations
\begin{center}
$\varrho \colon G^{\mathbbm{C}} \to {\rm{GL}}(V(\lambda))$ \ $\Longleftrightarrow$ \ $\varrho_{\ast} \colon \mathfrak{g}^{\mathbbm{C}} \to \mathfrak{gl}(V(\lambda))$,
\end{center}
such that $\varrho(\exp(x)) = \exp(\varrho_{\ast}x)$, $\forall x \in \mathfrak{g}^{\mathbbm{C}}$, notice that $G^{\mathbbm{C}} = \langle \exp(\mathfrak{g}^{\mathbbm{C}}) \rangle$.
\end{enumerate}
\end{remark}
Given a representation $\varrho \colon G^{\mathbbm{C}} \to {\rm{GL}}(V(\lambda))$, for the sake of simplicity, we shall denote $\varrho(g)v = gv$, for all $g \in G^{\mathbbm{C}}$, and all $v \in V(\lambda)$. Let $G \subset G^{\mathbbm{C}}$ be a compact real form for $G^{\mathbbm{C}}$. Given a complex flag variety $X_{P} = G^{\mathbbm{C}}/P$, regarding $X_{P}$ as a homogeneous $G$-space, that is, $X_{P} = G/G\cap P$, the following theorem allows us to describe all $G$-invariant K\"{a}hler structures on $X_{P}$ through elements of representation theory.
\begin{theorem}[Azad-Biswas, \cite{AZAD}]
\label{AZADBISWAS}
Let $\omega \in \Omega^{1,1}(X_{P})^{G}$ be a closed invariant real $(1,1)$-form, then we have

\begin{center}

$\pi^{\ast}\omega = \sqrt{-1}\partial \overline{\partial}\varphi$,

\end{center}
where $\pi \colon G^{\mathbbm{C}} \to X_{P}$ is the natural projection, and $\varphi \colon G^{\mathbbm{C}} \to \mathbbm{R}$ is given by 
\begin{center}
$\varphi(g) = \displaystyle \sum_{\alpha \in \Delta \backslash I}c_{\alpha}\log \big (||gv_{\varpi_{\alpha}}^{+}|| \big )$, \ \ \ \ $(\forall g \in G^\mathbbm{C})$
\end{center}
with $c_{\alpha} \in \mathbbm{R}$, $\forall \alpha \in \Delta \backslash I$. Conversely, every function $\varphi$ as above defines a closed invariant real $(1,1)$-form $\omega_{\varphi} \in \Omega^{1,1}(X_{P})^{G}$. Moreover, $\omega_{\varphi}$ defines a $G$-invariant K\"{a}hler form on $X_{P}$ if and only if $c_{\alpha} > 0$,  $\forall \alpha \in \Delta \backslash I$.
\end{theorem}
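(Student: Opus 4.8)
The plan is to prove both assertions by realizing every invariant form as a linear combination of pullbacks of Fubini--Study metrics under the equivariant projective embeddings furnished by the fundamental representations $V(\varpi_{\alpha})$, $\alpha \in \Delta \setminus I$. Fix such an $\alpha$ and endow $V(\varpi_{\alpha})$ with a $G$-invariant Hermitian inner product, which exists because $G$ is compact. Since $\langle \varpi_{\alpha}, \gamma^{\vee} \rangle = 0$ for every $\gamma \in I$, the lowering operators attached to the simple roots in $I$ annihilate $v_{\varpi_{\alpha}}^{+}$, so the highest weight line $\mathbbm{C} v_{\varpi_{\alpha}}^{+}$ is $P$-stable and $gP \mapsto [g v_{\varpi_{\alpha}}^{+}]$ defines a holomorphic embedding $\iota_{\alpha} \colon X_{P} \to \mathbbm{P}(V(\varpi_{\alpha}))$. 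The map $g \mapsto g v_{\varpi_{\alpha}}^{+}$ is a global holomorphic lift of $\iota_{\alpha} \circ \pi$, so pulling back the Fubini--Study form through this lift yields, up to a positive normalization constant, $\pi^{\ast}(\iota_{\alpha}^{\ast} \omega_{\mathrm{FS}}) = \sqrt{-1}\,\partial \overline{\partial}\, \varphi_{\alpha}$ with $\varphi_{\alpha}(g) = \log(\|g v_{\varpi_{\alpha}}^{+}\|)$. This already settles the converse direction: $\varphi_{\alpha}$ is left $G$-invariant because $G$ acts unitarily, the identity above exhibits $\sqrt{-1}\,\partial \overline{\partial}\, \varphi_{\alpha}$ as the pullback of a genuine closed real $(1,1)$-form $\omega_{\varphi_{\alpha}}$ on $X_{P}$ (representing a positive multiple of $c_{1}$ of the line bundle $L_{\varpi_{\alpha}}$ attached to $\varpi_{\alpha}$), and by linearity $\sum_{\alpha} c_{\alpha} \varphi_{\alpha}$ produces the asserted invariant form for any choice of $c_{\alpha} \in \mathbbm{R}$.

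For the forward direction I would show these exhaust all invariant forms via a cohomological reduction. As recalled in Section~\ref{subsec3.1}, the bundles $L_{\varpi_{\alpha}}$, $\alpha \in \Delta \setminus I$, generate $\mathrm{Pic}(X_{P})$; since $X_{P}$ is a simply connected projective rational variety, $H^{2}(X_{P}, \mathbbm{R}) = H^{1,1}(X_{P}, \mathbbm{R}) = \bigoplus_{\alpha \in \Delta \setminus I} \mathbbm{R}\,[\omega_{\varphi_{\alpha}}]$. Hence, given an arbitrary closed invariant real $(1,1)$-form $\omega$, there are unique $c_{\alpha} \in \mathbbm{R}$ with $[\omega] = \sum_{\alpha} c_{\alpha}\,[\omega_{\varphi_{\alpha}}]$ in cohomology. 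The difference $\eta := \omega - \sum_{\alpha} c_{\alpha} \omega_{\varphi_{\alpha}}$ is then a closed, invariant, exact real $(1,1)$-form. The key point is uniqueness of the invariant representative: by the $\partial \overline{\partial}$-lemma on the compact K\"{a}hler manifold $X_{P}$ we write $\eta = \sqrt{-1}\,\partial \overline{\partial} f$, and averaging $f$ over $G$ (which leaves $\eta$ unchanged, as $\eta$ is invariant) we may take $f$ to be $G$-invariant; but a $G$-invariant function on the homogeneous space $X_{P}$ is constant, so $\eta = 0$. Therefore $\omega = \sum_{\alpha} c_{\alpha} \omega_{\varphi_{\alpha}}$, and pulling back by $\pi$ gives $\pi^{\ast}\omega = \sqrt{-1}\,\partial \overline{\partial}\big(\sum_{\alpha} c_{\alpha} \log(\|g v_{\varpi_{\alpha}}^{+}\|)\big)$.

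Finally, for the K\"{a}hler criterion I would identify the holomorphic tangent space of $X_{P}$ at the base point $o = eP$ with $\bigoplus_{\beta \in \Phi^{+} \setminus \langle I \rangle^{+}} \mathfrak{g}_{-\beta}$ and compute $\omega_{\varphi_{\alpha}}$ on the root vector $y_{-\beta}$ directly from the Fubini--Study expression, using the structure relations $[x_{\beta}, y_{-\beta}] = h_{\beta}$. The outcome is a strictly positive constant times $\langle \varpi_{\alpha}, \beta^{\vee} \rangle$, so $\omega_{\varphi} = \sum_{\alpha} c_{\alpha} \omega_{\varphi_{\alpha}}$ is positive on $\mathfrak{g}_{-\beta}$ exactly when $\langle \sum_{\alpha} c_{\alpha} \varpi_{\alpha}, \beta^{\vee} \rangle > 0$. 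Because $\langle \varpi_{\alpha}, \beta^{\vee} \rangle \geq 0$ for every positive root $\beta$, the hypothesis $c_{\alpha} > 0$ for all $\alpha$ makes each of these pairings positive, so $\omega_{\varphi}$ is K\"{a}hler; conversely, testing $\beta = \alpha_{0} \in \Delta \setminus I$ gives $\langle \sum_{\alpha} c_{\alpha} \varpi_{\alpha}, \alpha_{0}^{\vee} \rangle = c_{\alpha_{0}}$, so positivity forces $c_{\alpha_{0}} > 0$. This yields the equivalence ``K\"{a}hler $\iff c_{\alpha} > 0$ for all $\alpha \in \Delta \setminus I$''.

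The main obstacle is the explicit Fubini--Study computation that underlies both the descent in the first paragraph and the positivity in the last: one must expand $\iota_{\alpha}^{\ast} \omega_{\mathrm{FS}}$ at $o$ in the root-space coordinates and verify that the weight $\varpi_{\alpha}$ governs the eigenvalues through the pairings $\langle \varpi_{\alpha}, \beta^{\vee} \rangle$. Once this is in place, the remaining ingredients---unitarity of the $G$-action, triviality of $G$-invariant functions, and the generation of $\mathrm{Pic}(X_{P})$ by the $L_{\varpi_{\alpha}}$---assemble the result formally.
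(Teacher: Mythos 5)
This statement is imported verbatim from Azad--Biswas \cite{AZAD}; the paper offers no proof of its own, so there is nothing internal to compare your argument against. Taken on its own terms, your outline follows the standard route and is essentially sound: realizing each $\varphi_{\varpi_{\alpha}}$ as the potential of a pulled-back Fubini--Study form settles the converse direction, the averaging argument correctly establishes uniqueness of the invariant representative in a cohomology class, and the root-space computation at the base point is indeed how the positivity criterion is obtained. Three points deserve care. First, $gP \mapsto [g v_{\varpi_{\alpha}}^{+}]$ is an embedding only when $P$ is the maximal parabolic $P_{\Delta\setminus\{\alpha\}}$; for general $P_{I}$ it factors through the projection $X_{P_{I}} \to X_{P_{\Delta\setminus\{\alpha\}}}$ and is not injective. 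This is harmless, since your argument only uses the pullback identity $\pi^{\ast}(\iota_{\alpha}^{\ast}\omega_{\mathrm{FS}}) = \sqrt{-1}\partial\overline{\partial}\log\|g v_{\varpi_{\alpha}}^{+}\|$, but the word ``embedding'' should be dropped. Second, your forward direction invokes the decomposition $H^{1,1}(X_{P},\mathbbm{R}) = \bigoplus_{\alpha}\mathbbm{R}[\omega_{\varphi_{\alpha}}]$, which in this paper (Proposition \ref{C8S8.2Sub8.2.3P8.2.6}) is \emph{derived from} Theorem \ref{AZADBISWAS}; to avoid circularity you must source it independently, e.g.\ from the Bruhat decomposition (which gives $H^{2}(X_{P},\mathbbm{Z})$ freely generated by Schubert divisor classes) together with the period computation $\int_{\mathbbm{P}^{1}_{\beta}}\omega_{\varphi_{\alpha}} = \langle\varpi_{\alpha},\beta^{\vee}\rangle$ --- which is exactly the deferred computation you flag as the ``main obstacle.'' Third, in the positivity step, ``$\langle\varpi_{\alpha},\beta^{\vee}\rangle \geq 0$ and $c_{\alpha}>0$'' only yields nonnegativity of $\langle\sum_{\alpha}c_{\alpha}\varpi_{\alpha},\beta^{\vee}\rangle$; you need the additional observation that every $\beta \in \Phi_{I}^{+}$ has strictly positive coefficient on at least one simple root in $\Delta\setminus I$ (this is the definition of $\Phi_{I}^{+}$), which forces at least one summand to be strictly positive. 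With the deferred eigenvalue computation carried out and these repairs made, the argument is complete.
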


\begin{remark}
\label{innerproduct}
It is worth pointing out that the norm $|| \cdot ||$ considered in the above theorem is a norm induced from some fixed $G$-invariant inner product $\langle \cdot, \cdot \rangle_{\alpha}$ on $V(\varpi_{\alpha})$, $\forall \alpha \in \Delta \backslash I$. 
\end{remark}

\begin{remark}
An important consequence of Theorem \ref{AZADBISWAS} is that it allows us to describe the local K\"{a}hler potential for any homogeneous K\"{a}hler metric in a quite concrete way, for some examples of explicit computations, we suggest \cite{CorreaGrama}, \cite{Correa}.
\end{remark}

By means of the above theorem we can describe the unique $G$-invariant representative of each integral class in $H^{2}(X_{P},\mathbbm{Z})$. In fact, consider the holomorphic $P$-principal bundle $P \hookrightarrow G^{\mathbbm{C}} \to X_{P}$. By choosing a trivializing open covering $X_{P} = \bigcup_{i \in J}U_{i}$, in terms of $\check{C}$ech cocycles we can write 
\begin{center}
$G^{\mathbbm{C}} = \Big \{(U_{i})_{i \in J}, \psi_{ij} \colon U_{i} \cap U_{j} \to P \Big \}$.
\end{center}
Given $\varpi_{\alpha} \in \Lambda^{+}$, we consider the induced character $\vartheta_{\varpi_{\alpha}} \in {\text{Hom}}(T^{\mathbbm{C}},\mathbbm{C}^{\times})$, such that $({\rm{d}}\vartheta_{\varpi_{\alpha}})_{e} = \varpi_{\alpha}$. Since $P = P_{I}$, we have the decomposition  
\begin{equation}
\label{parabolicdecomposition}
P_{I} = \big[P_{I},P_{I} \big]T(\Delta \backslash I)^{\mathbbm{C}}, \ \  {\text{such that }} \ \ T(\Delta \backslash I)^{\mathbbm{C}} = \exp \Big \{ \displaystyle \sum_{\alpha \in  \Delta \backslash I}a_{\alpha}t_{\alpha} \ \Big | \ a_{\alpha} \in \mathbbm{C} \Big \},
\end{equation}
such that $t_{\alpha} := \frac{2}{\langle \alpha,\alpha \rangle}h_{\alpha}, \forall \alpha \in \Delta \backslash I$, e.g. \cite[\S 3]{Akhiezer}, so we can consider the extension $ \vartheta_{\varpi_{\alpha}} \in {\text{Hom}}(P,\mathbbm{C}^{\times})$. From the homomorphism $\vartheta_{\varpi_{\alpha}} \colon P \to \mathbbm{C}^{\times}$ one can equip $\mathbbm{C}$ with a structure of $P$-space, such that $pz = \vartheta_{\varpi_{\alpha}}(p)^{-1}z$, $\forall p \in P$, and $\forall z \in \mathbbm{C}$. Denoting by $\mathbbm{C}_{-\varpi_{\alpha}}$ this $P$-space, we can form an associated holomorphic line bundle $\mathscr{O}_{\alpha}(1) = G^{\mathbbm{C}} \times_{P}\mathbbm{C}_{-\varpi_{\alpha}}$, which can be described in terms of $\check{C}$ech cocycles by
\begin{equation}
\label{linecocycle}
\mathscr{O}_{\alpha}(1) = \Big \{(U_{i})_{i \in J},\vartheta_{\varpi_{\alpha}}^{-1} \circ \psi_{i j} \colon U_{i} \cap U_{j} \to \mathbbm{C}^{\times} \Big \},
\end{equation}
that is, $\mathscr{O}_{\alpha}(1) = \{g_{ij}\} \in \check{H}^{1}(X_{P},\mathcal{O}_{X_{P}}^{\ast})$, such that $g_{ij} = \vartheta_{\varpi_{\alpha}}^{-1} \circ \psi_{i j}$, $\forall i,j \in J$. 
\begin{remark}
\label{parabolicdec}
Given a parabolic Lie subgroup $P \subset G^{\mathbbm{C}}$, such that $P = P_{I}$, for some $I \subset \Delta$, the decomposition Eq. (\ref{parabolicdecomposition}) shows us that ${\text{Hom}}(P,\mathbbm{C}^{\times}) = {\text{Hom}}(T(\Delta \backslash I)^{\mathbbm{C}},\mathbbm{C}^{\times})$. Therefore, given $\vartheta \in {\text{Hom}}(T^{\mathbbm{C}},\mathbbm{C}^{\times})$, we consider $\vartheta \in {\text{Hom}}(P,\mathbbm{C}^{\times})$ as being the trivial extension of $\vartheta|_{T(\Delta \backslash I)^{\mathbbm{C}}}$ to $P$. In particular, if we take $\varpi_{\alpha} \in \Lambda^{+}$, such that $\alpha \in I$, since $\vartheta_{\varpi_{\alpha}}|_{T(\Delta \backslash I)^{\mathbbm{C}}}$ is trivial, it follows that $\mathscr{O}_{\alpha}(1)$ is trivial.
\end{remark}

\begin{remark}
Throughout this paper we shall use the following notation
\begin{equation}
\mathscr{O}_{\alpha}(k) := \mathscr{O}_{\alpha}(1)^{\otimes k},
\end{equation}
for every $k \in \mathbbm{Z}$ and every $\alpha \in \Delta \backslash I$. 
\end{remark}

Given $\mathscr{O}_{\alpha}(1) \in {\text{Pic}}(X_{P})$, such that $\alpha \in \Delta \backslash I$, as described above, if we consider an open covering $X_{P} = \bigcup_{i \in J} U_{i}$ which trivializes both $P \hookrightarrow G^{\mathbbm{C}} \to X_{P}$ and $ \mathscr{O}_{\alpha}(1) \to X_{P}$, by taking a collection of local sections $(s_{i})_{i \in J}$, such that $s_{i} \colon U_{i} \to G^{\mathbbm{C}}$, we can define $q_{i} \colon U_{i} \to \mathbbm{R}^{+}$, such that 
\begin{equation}
\label{functionshermitian}
q_{i} := \frac{1}{||s_{i}v_{\varpi_{\alpha}}^{+}||^{2}},
\end{equation}
for every $i \in J$. Since $s_{j} = s_{i}\psi_{ij}$ on $U_{i} \cap U_{j} \neq \emptyset$, and $pv_{\varpi_{\alpha}}^{+} = \vartheta_{\varpi_{\alpha}}(p)v_{\varpi_{\alpha}}^{+}$, for every $p \in P$, and every $\alpha \in \Delta \backslash I$, the collection of functions $(q_{i})_{i \in J}$ satisfy $q_{j} = |\vartheta_{\varpi_{\alpha}}^{-1} \circ \psi_{ij}|^{2}q_{i}$ on $U_{i} \cap U_{j} \neq \emptyset$. Hence, we obtain a collection of functions $(q_{i})_{i \in J}$ which satisfies on the overlaps $U_{i} \cap U_{j} \neq \emptyset$ the following relation
\begin{equation}
\label{collectionofequ}
q_{j} = |g_{ij}|^{2}q_{i},
\end{equation}
such that $g_{ij} = \vartheta_{\varpi_{\alpha}}^{-1} \circ \psi_{i j}$, $\forall i,j \in J$. From this, we can define a Hermitian structure ${\bf{h}}$ on $\mathscr{O}_{\alpha}(1)$ by taking on each trivialization $f_{i} \colon \mathscr{O}_{\alpha}(1)|_{U_{i}} \to U_{i} \times \mathbbm{C}$ the metric defined by
\begin{equation}
\label{hermitian}
{\bf{h}}(f_{i}^{-1}(x,v),f_{i}^{-1}(x,w)) = q_{i}(x) v\overline{w},
\end{equation}
for every $(x,v),(x,w) \in U_{i} \times \mathbbm{C}$. The Hermitian metric above induces a Chern connection $\nabla \myeq {\rm{d}} + \partial \log {\bf{h}}$ with curvature $F_{\nabla}$ satisfying (locally)
\begin{equation}
\displaystyle \frac{\sqrt{-1}}{2\pi}F_{\nabla} \myeq \frac{\sqrt{-1}}{2\pi} \partial \overline{\partial}\log \Big ( \big | \big | s_{i}v_{\varpi_{\alpha}}^{+}\big | \big |^{2} \Big).
\end{equation}
Therefore, by considering the closed $G$-invariant $(1,1)$-form ${\bf{\Omega}}_{\alpha} \in \Omega^{1,1}(X_{P})^{G}$, which satisfies $\pi^{\ast}{\bf{\Omega}}_{\alpha} = \sqrt{-1}\partial \overline{\partial} \varphi_{\varpi_{\alpha}}$, where $\pi \colon G^{\mathbbm{C}} \to G^{\mathbbm{C}} / P = X_{P}$, and $\varphi_{\varpi_{\alpha}}(g) = \frac{1}{2\pi}\log||gv_{\varpi_{\alpha}}^{+}||^{2}$, $\forall g \in G^{\mathbbm{C}}$, we have 
\begin{equation}
{\bf{\Omega}}_{\alpha} |_{U_{i}} = (\pi \circ s_{i})^{\ast}{\bf{\Omega}}_{\alpha} = \frac{\sqrt{-1}}{2\pi}F_{\nabla} \Big |_{U_{i}},
\end{equation}
i.e., $c_{1}(\mathscr{O}_{\alpha}(1)) = [ {\bf{\Omega}}_{\alpha}]$, $\forall \alpha \in \Delta \backslash I$.

\begin{remark}
Given $I \subset \Delta$, we shall denote $\Phi_{I}^{\pm}:= \Phi^{\pm} \backslash \langle I \rangle^{\pm}$, such that $\langle I \rangle^{\pm} = \langle I \rangle \cap \Phi^{\pm}$.
\end{remark}

\begin{remark}
\label{bigcellcosntruction}
In order to perform some local computations we shall consider the open set $U^{-}(P) \subset X_{P}$ defined by the ``opposite" big cell in $X_{P}$. This open set is a distinguished coordinate neighbourhood $U^{-}(P) \subset X_{P}$ of $x_{0} = eP \in X_{P}$ defined as follows
\begin{equation}
\label{bigcell}
 U^{-}(P) =  B^{-}x_{0} = R_{u}(P_{I})^{-}x_{0} \subset X_{P},  
\end{equation}
 where $B^{-} = \exp(\mathfrak{h} \oplus \mathfrak{n}^{-})$, and
 
 \begin{center}
 
 $R_{u}(P_{I})^{-} = \displaystyle \prod_{\alpha \in \Phi_{I}^{+}}N_{\alpha}^{-}$, \ \ (opposite unipotent radical)
 
 \end{center}
with $N_{\alpha}^{-} = \exp(\mathfrak{g}_{-\alpha})$, $\forall \alpha \in \Phi_{I}^{+}$, e.g. \cite[\S 3]{Lakshmibai2},\cite[\S 3.1]{Akhiezer}. It is worth mentioning that the opposite big cell defines a contractible open dense subset in $X_{P}$, thus the restriction of any vector bundle (principal bundle) over this open set is trivial.
\end{remark}

Consider now the following result.

\begin{lemma}
\label{funddynkinline}
Consider $\mathbbm{P}_{\beta}^{1} = \overline{\exp(\mathfrak{g}_{-\beta})x_{0}} \subset X_{P}$, such that $\beta \in \Phi_{I}^{+}$. Then, 
\begin{equation}
\int_{\mathbbm{P}_{\beta}^{1}} {\bf{\Omega}}_{\alpha} = \langle \varpi_{\alpha}, \beta^{\vee}  \rangle, \ \forall \alpha \in \Delta \backslash I.
\end{equation}
\end{lemma}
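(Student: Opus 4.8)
The plan is to reduce the integral to the degree of $\mathscr{O}_{\alpha}(1)$ restricted to the rational curve $\mathbbm{P}_{\beta}^{1}$, and then to compute that degree directly from the Kähler potential $\varphi_{\varpi_{\alpha}}$ via $\mathfrak{sl}_{2}$-representation theory. First I would coordinatize the affine part of $\mathbbm{P}_{\beta}^{1}$: since $\beta \in \Phi_{I}^{+}$, the factor $N_{\beta}^{-} = \exp(\mathfrak{g}_{-\beta})$ of the opposite unipotent radical from Remark \ref{bigcellcosntruction} embeds as a coordinate line in the big cell, so $z \mapsto \exp(z y_{-\beta})x_{0}$ is a biholomorphism from $\mathbbm{C}$ onto $\mathbbm{P}_{\beta}^{1}$ minus a single point. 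Using that $c_{1}(\mathscr{O}_{\alpha}(1)) = [\mathbf{\Omega}_{\alpha}]$ together with $\pi^{\ast}\mathbf{\Omega}_{\alpha} = \sqrt{-1}\partial\overline{\partial}\varphi_{\varpi_{\alpha}}$ and $\varphi_{\varpi_{\alpha}}(g) = \frac{1}{2\pi}\log\|g v_{\varpi_{\alpha}}^{+}\|^{2}$, the restriction of $\mathbf{\Omega}_{\alpha}$ to this chart is $\frac{\sqrt{-1}}{2\pi}\partial\overline{\partial}\log\|\exp(z y_{-\beta}) v_{\varpi_{\alpha}}^{+}\|^{2}$, and $\int_{\mathbbm{P}_{\beta}^{1}}\mathbf{\Omega}_{\alpha}$ is the integral of this form over $\mathbbm{C}$ (the missing point has measure zero).

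Next I would expand the potential explicitly. Because $v_{\varpi_{\alpha}}^{+}$ is a highest weight vector and $\beta > 0$ forces $x_{\beta} \in \mathfrak{n}^{+}$, we have $x_{\beta} v_{\varpi_{\alpha}}^{+} = 0$; thus $v_{\varpi_{\alpha}}^{+}$ is a highest weight vector for the $\mathfrak{sl}_{2}$-triple $(x_{\beta}, h_{\beta}, y_{-\beta})$ attached to $\beta$. With the coroot normalization $h_{\beta}^{\vee} = \frac{2}{\langle \beta, \beta \rangle}h_{\beta}$ (so that $\beta(h_{\beta}^{\vee}) = 2$), its $\mathfrak{sl}_{2}$-weight is $\varpi_{\alpha}(h_{\beta}^{\vee}) = \langle \varpi_{\alpha}, \beta^{\vee} \rangle =: N \geq 0$. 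Standard $\mathfrak{sl}_{2}$-string theory then yields $(y_{-\beta})^{k} v_{\varpi_{\alpha}}^{+} \neq 0$ for $0 \leq k \leq N$ and $(y_{-\beta})^{k} v_{\varpi_{\alpha}}^{+} = 0$ for $k > N$. Since the vectors $(y_{-\beta})^{k} v_{\varpi_{\alpha}}^{+}$ lie in the pairwise distinct weight spaces $V(\varpi_{\alpha})_{\varpi_{\alpha} - k\beta}$, they are mutually orthogonal for the $G$-invariant inner product of Remark \ref{innerproduct}, whence
\[
\big\| \exp(z y_{-\beta}) v_{\varpi_{\alpha}}^{+} \big\|^{2} = \sum_{k=0}^{N} \frac{|z|^{2k}}{(k!)^{2}} \big\| (y_{-\beta})^{k} v_{\varpi_{\alpha}}^{+} \big\|^{2} =: P(|z|^{2}),
\]
a polynomial in $|z|^{2}$ of degree exactly $N$ with strictly positive coefficients, hence nowhere vanishing on $\mathbbm{C}$.

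Finally I would evaluate $\int_{\mathbbm{P}_{\beta}^{1}}\mathbf{\Omega}_{\alpha} = \frac{\sqrt{-1}}{2\pi}\int_{\mathbbm{C}}\partial\overline{\partial}\log P(|z|^{2})$ by Stokes: writing $u = \log P(|z|^{2})$ and using $\partial\overline{\partial}u = d(\overline{\partial}u)$, the integral over a large disk $D_{R}$ equals $\frac{\sqrt{-1}}{2\pi}\oint_{|z|=R}\overline{\partial}u$, and since $u = N\log|z|^{2} + O(1)$ as $|z|\to\infty$ (because $P$ has degree $N$), the limit $R \to \infty$ gives $N$. This identifies $\int_{\mathbbm{P}_{\beta}^{1}}\mathbf{\Omega}_{\alpha} = N = \langle \varpi_{\alpha}, \beta^{\vee}\rangle$, as claimed. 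I expect the main obstacle to be the second step: pinning down that the terminating series has degree \emph{exactly} $N = \langle \varpi_{\alpha}, \beta^{\vee}\rangle$ (which requires the $\mathfrak{sl}_{2}$-string length together with the orthogonality of distinct weight spaces under the invariant metric), and correctly matching the paper's normalization $h_{\beta}$, $\beta^{\vee} = \frac{2}{\langle\beta,\beta\rangle}\beta$ to the coroot eigenvalue so that no spurious scalar factor appears; the boundary contribution at the point of $\mathbbm{P}_{\beta}^{1}$ omitted by the chart is handled automatically by the Stokes computation.
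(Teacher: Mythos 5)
The paper does not actually prove Lemma \ref{funddynkinline}; it defers to \cite{correa2023deformed} (see also \cite{AZAD}, \cite{FultonWoodward}). Your argument is correct and is essentially the standard proof found in those references: restrict to the affine chart $z \mapsto \exp(z y_{-\beta})x_{0}$, use that $v_{\varpi_{\alpha}}^{+}$ is an $\mathfrak{sl}_{2}(\beta)$-highest weight vector of weight $N = \langle \varpi_{\alpha},\beta^{\vee}\rangle$ so that the potential is $\frac{1}{2\pi}\log$ of a positive polynomial of degree exactly $N$ in $|z|^{2}$ (orthogonality of distinct weight spaces under the $G$-invariant inner product is the key point, and you handle the coroot normalization correctly), and then extract $N$ by Stokes from the $N\log|z|^{2}$ asymptotics. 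No gaps.
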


A proof of the above result can be found in \cite{correa2023deformed}, see also \cite{FultonWoodward} and \cite{AZAD}. From the above lemma and Theorem \ref{AZADBISWAS}, we obtain the following fundamental result.

\begin{proposition}[\cite{correa2023deformed}]
\label{C8S8.2Sub8.2.3P8.2.6}
Let $X_{P}$ be a complex flag variety associated with some parabolic Lie subgroup $P = P_{I}$. Then, we have
\begin{equation}
\label{picardeq}
{\text{Pic}}(X_{P}) = H^{1,1}(X_{P},\mathbbm{Z}) = H^{2}(X_{P},\mathbbm{Z}) = \displaystyle \bigoplus_{\alpha \in \Delta \backslash I}\mathbbm{Z}[{\bf{\Omega}}_{\alpha} ].
\end{equation}
\end{proposition}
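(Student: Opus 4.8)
The plan is to prove the three equalities in turn and then pin down the generators. \emph{Step 1 (identify ${\rm{Pic}}$ with $H^{2}$).} First I would run the exponential sheaf sequence $0 \to \mathbbm{Z} \to \mathcal{O}_{X_{P}} \to \mathcal{O}_{X_{P}}^{\ast} \to 0$ through cohomology to get
$$H^{1}(X_{P},\mathcal{O}_{X_{P}}) \longrightarrow {\rm{Pic}}(X_{P}) \xrightarrow{\ c_{1}\ } H^{2}(X_{P},\mathbbm{Z}) \longrightarrow H^{2}(X_{P},\mathcal{O}_{X_{P}}).$$
Since $X_{P} = G^{\mathbbm{C}}/P$ is a flag variety, the Borel--Weil--Bott theorem applied to the trivial line bundle gives $H^{i}(X_{P},\mathcal{O}_{X_{P}}) = 0$ for every $i > 0$; hence the two flanking terms vanish and $c_{1}$ is an isomorphism.

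\emph{Step 2 (identify $H^{1,1}$ with $H^{2}$).} The Bruhat decomposition equips $X_{P}$ with a CW structure whose Schubert cells all have even real dimension, so $H^{\mathrm{odd}}(X_{P},\mathbbm{Z}) = 0$ and $H^{\ast}(X_{P},\mathbbm{Z})$ is torsion-free. By Step 1 and Hodge symmetry, $H^{2}(X_{P},\mathcal{O}_{X_{P}}) = H^{0}(X_{P},\Omega^{2}_{X_{P}}) = 0$, so the Hodge structure on $H^{2}(X_{P},\mathbbm{C})$ is pure of type $(1,1)$. Consequently $H^{1,1}(X_{P},\mathbbm{Z}) = H^{2}(X_{P},\mathbbm{Z}) \cap H^{1,1}(X_{P}) = H^{2}(X_{P},\mathbbm{Z})$.

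\emph{Step 3 (the $[{\bf{\Omega}}_{\alpha}]$ are a $\mathbbm{Z}$-basis).} This is the crux. The one-dimensional Schubert cells are indexed by the simple reflections $s_{\beta}$ with $\beta \in \Delta \backslash I$, so the Schubert curves $\mathbbm{P}_{\beta}^{1}$ ($\beta \in \Delta \backslash I$) form an integral basis of $H_{2}(X_{P},\mathbbm{Z})$ and $b_{2}(X_{P}) = |\Delta \backslash I|$. I would then apply Lemma \ref{funddynkinline} together with the defining relation $\langle \varpi_{\alpha},\beta^{\vee}\rangle = \delta_{\alpha \beta}$ of the fundamental weights to compute, for all $\alpha,\beta \in \Delta \backslash I$,
$$\int_{\mathbbm{P}_{\beta}^{1}} {\bf{\Omega}}_{\alpha} = \langle \varpi_{\alpha},\beta^{\vee}\rangle = \delta_{\alpha \beta}.$$
Thus the Kronecker pairing matrix of $\{[{\bf{\Omega}}_{\alpha}]\}$ against the integral basis $\{[\mathbbm{P}_{\beta}^{1}]\}$ of $H_{2}(X_{P},\mathbbm{Z})$ is the identity. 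Since the cohomology is torsion-free, the pairing $H^{2}(X_{P},\mathbbm{Z}) \times H_{2}(X_{P},\mathbbm{Z}) \to \mathbbm{Z}$ is perfect, so the classes $[{\bf{\Omega}}_{\alpha}]$ are precisely the dual basis and hence a $\mathbbm{Z}$-basis of $H^{2}(X_{P},\mathbbm{Z})$. Chaining the three steps gives Eq. (\ref{picardeq}).

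The main obstacle lies in Step 3, precisely in the structural input that the Schubert curves $\mathbbm{P}_{\beta}^{1}$ ($\beta \in \Delta \backslash I$) form an integral basis of $H_{2}(X_{P},\mathbbm{Z})$ with $b_{2}(X_{P}) = |\Delta \backslash I|$; this rests on the combinatorics of the minimal-length coset representatives $W^{P}$ underlying the Bruhat decomposition. Once this is in hand, the unimodularity of the pairing matrix supplied by Lemma \ref{funddynkinline} renders the linear independence and $\mathbbm{Z}$-spanning of the $[{\bf{\Omega}}_{\alpha}]$ purely formal.
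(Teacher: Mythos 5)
Your proposal is correct and follows essentially the route the paper indicates: the crux is exactly Lemma \ref{funddynkinline} specialized to $\beta \in \Delta\backslash I$, where $\langle \varpi_{\alpha},\beta^{\vee}\rangle = \delta_{\alpha\beta}$ exhibits the $[{\bf{\Omega}}_{\alpha}]$ as the basis dual to the Schubert curves, the paper deferring the remaining standard identifications to \cite{correa2023deformed}. Your Steps 1 and 2 (exponential sequence with $H^{i}(X_{P},\mathcal{O}_{X_{P}})=0$, and the even-dimensional Bruhat cell structure forcing torsion-freeness and purity of type $(1,1)$) correctly supply those omitted details.
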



In the above setting, we consider the weights of $P = P_{I}$ as being  
\begin{center}
$\displaystyle \Lambda_{P} := \bigoplus_{\alpha \in \Delta \backslash I}\mathbbm{Z}\varpi_{\alpha}$. 
\end{center}
From this, the previous result provides $\Lambda_{P} \cong {\rm{Hom}}(P,\mathbbm{C}^{\times}) \cong {\rm{Pic}}(X_{P})$, such that 
\begin{enumerate}
\item$ \displaystyle \lambda = \sum_{\alpha \in \Delta \backslash I}k_{\alpha}\varpi_{\alpha} \mapsto \prod_{\alpha \in \Delta \backslash I} \vartheta_{\varpi_{\alpha}}^{k_{\alpha}} \mapsto \bigotimes_{\alpha \in \Delta \backslash I} \mathscr{O}_{\alpha}(k_{\alpha})$;
\item $ \displaystyle {\bf{E}} \mapsto \vartheta_{{\bf{E}}}: = \prod_{\alpha \in \Delta \backslash I} \vartheta_{\varpi_{\alpha}}^{\langle c_{1}({\bf{E}}),[\mathbbm{P}^{1}_{\alpha}] \rangle} \mapsto \lambda({\bf{E}}) := \sum_{\alpha \in \Delta \backslash I}\langle c_{1}({\bf{E}}),[\mathbbm{P}^{1}_{\alpha}] \rangle\varpi_{\alpha}$.
\end{enumerate}
Thus, $\forall {\bf{E}} \in {\rm{Pic}}(X_{P})$, we have $\lambda({\bf{E}}) \in \Lambda_{P}$. More generally, $\forall \xi \in H^{1,1}(X_{P},\mathbbm{R})$, we have $\lambda (\xi) \in \Lambda_{P}\otimes \mathbbm{R}$, such that
\begin{equation}
\label{weightcohomology}
\lambda(\xi) := \sum_{\alpha \in \Delta \backslash I}\langle \xi,[\mathbbm{P}^{1}_{\alpha}] \rangle\varpi_{\alpha}.
\end{equation}
From above, for every holomorphic vector bundle ${\bf{E}} \to X_{P}$, we define $\lambda({\bf{E}}) \in \Lambda_{P}$, such that 
\begin{equation}
\label{weightholomorphicvec}
\lambda({\bf{E}}) := \sum_{\alpha \in \Delta \backslash I} \langle c_{1}({\bf{E}}),[\mathbbm{P}_{\alpha}^{1}] \rangle \varpi_{\alpha},
\end{equation}
where $c_{1}({\bf{E}}) = c_{1}(\bigwedge^{r}{\bf{E}})$, such that $r = \rank({\bf{E}})$.

\begin{remark}[Harmonic 2-forms on $X_{P}$]Given any $G$-invariant Riemannian metric $g$ on $X_{P}$, let us denote by $\mathscr{H}^{2}(X_{P},g)$ the space of real harmonic 2-forms on $X_{P}$ with respect to $g$, and by $\mathscr{I}_{G}^{1,1}(X_{P})$ the space of closed invariant real $(1,1)$-forms. Combining the result of Proposition \ref{C8S8.2Sub8.2.3P8.2.6} with \cite[Lemma 3.1]{MR528871}, we obtain 
\begin{equation}
\mathscr{I}_{G}^{1,1}(X_{P}) = \mathscr{H}^{2}(X_{P},g). 
\end{equation}
Therefore, the closed $G$-invariant real $(1,1)$-forms described in Theorem \ref{AZADBISWAS} are harmonic with respect to any $G$-invariant Riemannian metric on $X_{P}$.
\end{remark}

\begin{remark}[K\"{a}hler cone of $X_{P}$]It follows from Eq. (\ref{picardeq}) and Theorem \ref{AZADBISWAS} that the K\"{a}hler cone of a complex flag variety $X_{P}$ is given explicitly by
\begin{equation}
\mathcal{K}(X_{P}) = \displaystyle \bigoplus_{\alpha \in \Delta \backslash I} \mathbbm{R}^{+}[ {\bf{\Omega}}_{\alpha}].
\end{equation}
\end{remark}

\begin{remark}[Cone of curves of $X_{P}$] It is worth observing that the cone of curves ${\rm{NE}}(X_{P})$ of a flag variety $X_{P}$ is generated by the rational curves $[\mathbbm{P}_{\alpha}^{1}] \in \pi_{2}(X_{P})$, $\alpha \in \Delta \backslash I$, see for instance \cite[\S 18.3]{Timashev} and references therein.
\end{remark}

\begin{proposition}
\label{eigenvalueatorigin}
Let $X_{P}$ be a flag variety and let $\omega_{0}$ be a $G$-invariant K\"{a}hler metric on $X_{P}$. Then, for every closed $G$-invariant real $(1,1)$-form $\psi$, the eigenvalues of the endomorphism $\omega_{0}^{-1} \circ \psi$ are given by 
\begin{equation}
\label{eigenvalues}
{\bf{q}}_{\beta}(\omega_{0}^{-1} \circ \psi) = \frac{ \langle \lambda([\psi]), \beta^{\vee} \rangle}{\langle \lambda([\omega_{0}]), \beta^{\vee} \rangle}, \ \ \beta \in \Phi_{I}^{+}.
\end{equation}
\end{proposition}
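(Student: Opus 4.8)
The plan is to exploit $G$-invariance to reduce the computation to the single point $x_0 = eP$ and then diagonalize $\omega_0$ and $\psi$ simultaneously using the root space decomposition of the isotropy representation. Since $\omega_0$ and $\psi$ are both $G$-invariant, the endomorphism $\omega_0^{-1}\circ\psi$ is a $G$-invariant section of $\End(T^{1,0}X_P)$; as $G$ acts transitively on $X_P$, its pointwise conjugacy class — in particular its eigenvalues — is the same at every point, so it suffices to work at $x_0$. Following Remark \ref{bigcellcosntruction}, the holomorphic tangent space at $x_0$ decomposes as $T^{1,0}_{x_0}X_P = \bigoplus_{\beta\in\Phi_I^+}\mathfrak{g}_{-\beta}$, and each $\mathfrak{g}_{-\beta}$ is a one-dimensional weight space for the maximal torus $T = G\cap T^{\mathbbm{C}}$, with pairwise distinct weights $-\beta$.

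Next I would observe that any $G$-invariant real $(1,1)$-form restricts at $x_0$ to a $T$-invariant Hermitian form on $T^{1,0}_{x_0}X_P$. Since the weight spaces $\mathfrak{g}_{-\beta}$ are distinct and one-dimensional, a Schur-type argument forces every such form to be diagonal with respect to this decomposition. Hence both $\omega_0$ and $\psi$ are simultaneously diagonalized at $x_0$, and $\omega_0^{-1}\circ\psi$ acts as a scalar $\mathbf{q}_\beta$ on each line $\mathfrak{g}_{-\beta}$; these scalars are precisely the eigenvalues to be determined.

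To identify $\mathbf{q}_\beta$, I would restrict everything to the rational curve $\mathbbm{P}^1_\beta = \overline{\exp(\mathfrak{g}_{-\beta})x_0}$, which is the orbit of the rank-one simple subgroup associated with the triple $\{x_\beta, y_{-\beta}, h_\beta\}$ and is tangent to $\mathfrak{g}_{-\beta}$ at $x_0$. The restrictions $\psi|_{\mathbbm{P}^1_\beta}$ and $\omega_0|_{\mathbbm{P}^1_\beta}$ are invariant $2$-forms on this homogeneous $\mathbbm{P}^1$, and the space of such forms is one-dimensional; comparing them at $x_0$ with the previous step gives $\psi|_{\mathbbm{P}^1_\beta} = \mathbf{q}_\beta\,\omega_0|_{\mathbbm{P}^1_\beta}$ pointwise. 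Integrating over $\mathbbm{P}^1_\beta$ then yields
\begin{equation}
\mathbf{q}_\beta = \frac{\int_{\mathbbm{P}^1_\beta}\psi}{\int_{\mathbbm{P}^1_\beta}\omega_0}.
\end{equation}

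Finally I would convert the two integrals into the claimed weight-theoretic expression. Writing $[\psi] = \sum_{\alpha\in\Delta\setminus I}a_\alpha[{\bf{\Omega}}_{\alpha}]$ via Proposition \ref{C8S8.2Sub8.2.3P8.2.6} and applying Lemma \ref{funddynkinline} gives $\int_{\mathbbm{P}^1_\beta}\psi = \sum_\alpha a_\alpha\langle\varpi_\alpha,\beta^\vee\rangle = \langle\sum_\alpha a_\alpha\varpi_\alpha,\beta^\vee\rangle$. Pairing with the simple coroots $\alpha^\vee$, $\alpha\in\Delta\setminus I$, and using $\langle\varpi_{\alpha'},\alpha^\vee\rangle = \delta_{\alpha'\alpha}$ shows $a_\alpha = \int_{\mathbbm{P}^1_\alpha}\psi = \langle[\psi],[\mathbbm{P}^1_\alpha]\rangle$, which identifies $\sum_\alpha a_\alpha\varpi_\alpha$ with $\lambda([\psi])$ as defined in Eq. (\ref{weightcohomology}); thus $\int_{\mathbbm{P}^1_\beta}\psi = \langle\lambda([\psi]),\beta^\vee\rangle$, and likewise for $\omega_0$. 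Substituting into the displayed ratio gives Eq. (\ref{eigenvalues}). The main obstacle is the diagonalization step: one must carefully justify, via the distinctness of the torus weights $-\beta$, that $G$-invariance alone forces $\omega_0$ and $\psi$ to be simultaneously diagonal at $x_0$, since the whole argument — and in particular the reduction of each eigenvalue to a ratio of integrals over $\mathbbm{P}^1_\beta$ — hinges on this.
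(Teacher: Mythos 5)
Your argument is correct. Note that the paper does not reproduce a proof of Proposition \ref{eigenvalueatorigin}; it only cites \cite{correa2023deformed}, where the statement is obtained by computing the invariant forms explicitly at $x_{0}=eP$ through the local K\"ahler potentials of Theorem \ref{AZADBISWAS} on the opposite big cell: in coordinates adapted to the root spaces $\mathfrak{g}_{-\beta}$, each ${\bf{\Omega}}_{\alpha}$ is diagonal at $x_{0}$ with entries proportional to $\langle \varpi_{\alpha},\beta^{\vee}\rangle$, and the eigenvalues of $\omega_{0}^{-1}\circ\psi$ are read off as ratios of diagonal entries. You reach the same diagonalization without touching the potentials: the Schur-type argument using the distinctness of the isotropy weights $-\beta$ on the one-dimensional spaces $\mathfrak{g}_{-\beta}$ is sound (for $u\in\mathfrak{g}_{-\beta}$, $v\in\mathfrak{g}_{-\gamma}$ and $t$ in the compact torus, invariance forces the off-diagonal entry to be fixed by the nontrivial character $e^{\gamma-\beta}$, hence to vanish), and your identification of the diagonal ratio ${\bf{q}}_{\beta}$ by restricting to the ${\rm{SU}}(2)_{\beta}$-homogeneous curve $\mathbbm{P}^{1}_{\beta}$, where the space of invariant $2$-forms is one-dimensional, correctly reduces everything to $\int_{\mathbbm{P}^{1}_{\beta}}\psi\big/\int_{\mathbbm{P}^{1}_{\beta}}\omega_{0}$, which Lemma \ref{funddynkinline} and Eq. (\ref{weightcohomology}) convert into $\langle\lambda([\psi]),\beta^{\vee}\rangle/\langle\lambda([\omega_{0}]),\beta^{\vee}\rangle$ (the denominator being positive since $\omega_{0}$ is K\"ahler). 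What your route buys is that it uses only ingredients already quoted in the paper (Theorem \ref{AZADBISWAS}, Lemma \ref{funddynkinline}, Proposition \ref{C8S8.2Sub8.2.3P8.2.6}) rather than the explicit potential expansion; what the reference's computation buys is an explicit normal form for the forms at $x_{0}$, which is reused elsewhere for local formulas such as those in Remark \ref{HYMinstantonlinebundle}. The one step you rightly flag as needing care, simultaneous diagonality, is exactly the point settled by the weight argument, so there is no gap.
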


A proof for the above result can be found in \cite{correa2023deformed}.

\begin{remark}
\label{primitivecalc}
In the setting of the last proposition, since $n \psi\wedge \omega_{0}^{n-1} = \Lambda_{\omega_{0}}(\psi)\omega_{0}^{n}$, such that $n=\dim_{\mathbbm{C}}(X_{P})$, and $\Lambda_{\omega_{0}}(\psi)={\rm{tr}}(\omega_{0}^{-1} \circ \psi)$, it follows that
\begin{equation}
\label{contractiongenerators}
\Lambda_{\omega_{0}}({\bf{\Omega}}_{\alpha})=\sum_{\beta \in \Phi_{I}^{+}} \frac{\langle \varpi_{\alpha}, \beta^{\vee} \rangle}{\langle \lambda([\omega_{0}]), \beta^{\vee}\rangle},
\end{equation}
for every $\alpha \in \Delta \backslash I$. In particular, for every ${\bf{E}} \in {\rm{Pic}}(X_{P})$, we have a Hermitian structure ${\bf{h}}$ on ${\bf{E}}$, such that the curvature $F_{\nabla}$ of the Chern connection $\nabla \myeq {\rm{d}} + \partial \log ({\bf{h}})$, satisfies 
\begin{equation}
\frac{\sqrt{-1}}{2\pi} \Lambda_{\omega_{0}}(F_{\nabla}) = \sum_{\beta \in \Phi_{I}^{+} } \frac{\langle \lambda({\bf{E}}), \beta^{\vee} \rangle}{\langle \lambda([\omega_{0}]), \beta^{\vee}\rangle}.
\end{equation}
From this, we have that $\nabla$ is a Hermitian-Yang-Mills (HYM) connection. Notice that 
\begin{equation}
c_{1}({\bf{E}}) = \sum_{\alpha \in \Delta \backslash I}\langle \lambda({\bf{E}}), \alpha^{\vee} \rangle [{\bf{\Omega}}_{\alpha}],
\end{equation}
for every ${\bf{E}} \in {\rm{Pic}}(X_{P})$, i.e., the curvature of the HYM connection $\nabla$ on ${\bf{E}}$ coincides with the $G$-invariant representative of $c_{1}({\bf{E}})$. 
\end{remark}

As a consequence of Proposition \ref{eigenvalueatorigin}, in \cite{correa2023deformed}, the author proved the following result.

\begin{theorem}
\label{ProofTheo1}
Given a  K\"{a}hler class $[\omega] \in \mathcal{K}(X_{P})$, then for every $[\psi] \in H^{1,1}(X_{P},\mathbbm{R})$ we have 
\begin{equation}
\label{liftphase}
\hat{\Theta}: = {\rm{Arg}} \int_{X_{P}}\frac{(\omega + \sqrt{-1}\psi)^{n}}{n!} = \sum_{\beta \in \Phi_{I}^{+}} \arctan \bigg( \frac{\langle \lambda([\psi]),\beta^{\vee} \rangle}{\langle \lambda([\omega]),\beta^{\vee} \rangle}\bigg) \ (\text{mod} \ 2\pi),
\end{equation}
such that $\lambda([\psi]), \lambda([\omega_{0}]) \in \Lambda_{P} \otimes \mathbbm{R}$. In particular, fixed the unique $G$-invariant representative $\omega_{0} \in [\omega]$, there exists $\phi \in C^{\infty}(X_{P})$, such that $\chi_{\phi}:= \psi + \sqrt{-1}\partial \overline \partial \phi$ satisfies the deformed Hermitian Yang-Mills equation
\begin{equation}
\label{DHYMeqTeo}
{\rm{Im}}\big ( \omega_{0} + \sqrt{-1}\chi_{\phi}\big)^{n} = \tan(\hat{\Theta}) {\rm{Re}}\big ( \omega_{0} + \sqrt{-1}\chi_{\phi}\big)^{n},
\end{equation}
\end{theorem}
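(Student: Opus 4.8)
The plan is to reduce the computation of $\hat{\Theta}$ to a pointwise algebraic identity by exploiting $G$-invariance, and then to produce the dHYM solution as the unique invariant representative of $[\psi]$. First I would observe that the integral $\int_{X_{P}}\frac{(\omega+\sqrt{-1}\psi)^{n}}{n!}$ depends only on the cohomology classes $[\omega],[\psi]$, so I may replace $\omega,\psi$ by their unique $G$-invariant representatives $\omega_{0},\psi_{0}$ (Theorem \ref{AZADBISWAS}, together with the remark identifying closed invariant real $(1,1)$-forms with harmonic $2$-forms). Since $\omega_{0}$ is a $G$-invariant K\"{a}hler form, the endomorphism $\omega_{0}^{-1}\circ\psi_{0}$ is $G$-equivariant, hence its spectrum is constant on $X_{P}$; by Proposition \ref{eigenvalueatorigin} the eigenvalues are exactly
\[
\lambda_{\beta}={\bf{q}}_{\beta}(\omega_{0}^{-1}\circ\psi_{0})=\frac{\langle \lambda([\psi]),\beta^{\vee}\rangle}{\langle \lambda([\omega_{0}]),\beta^{\vee}\rangle},\qquad \beta\in\Phi_{I}^{+},
\]
and there are exactly $n=|\Phi_{I}^{+}|=\dim_{\mathbbm{C}}(X_{P})$ of them.

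Since $\omega_{0}$ is K\"{a}hler, at every point the ratio of top forms satisfies the standard linear-algebra identity
\[
\frac{(\omega_{0}+\sqrt{-1}\psi_{0})^{n}}{\omega_{0}^{n}}=\det\big(\mathbbm{1}+\sqrt{-1}\,\omega_{0}^{-1}\circ\psi_{0}\big)=\prod_{\beta\in\Phi_{I}^{+}}\big(1+\sqrt{-1}\lambda_{\beta}\big),
\]
which is a constant by the previous step. Integrating, I obtain $\int_{X_{P}}\frac{(\omega+\sqrt{-1}\psi)^{n}}{n!}=\big(\prod_{\beta}(1+\sqrt{-1}\lambda_{\beta})\big)\int_{X_{P}}\frac{\omega_{0}^{n}}{n!}$. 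Because the volume $\int_{X_{P}}\frac{\omega_{0}^{n}}{n!}$ is a positive real number and each factor $1+\sqrt{-1}\lambda_{\beta}$ has positive real part, taking arguments and using ${\rm{Arg}}(\prod_{j}z_{j})=\sum_{j}{\rm{Arg}}(z_{j})\ ({\rm{mod}}\ 2\pi)$ yields
\[
\hat{\Theta}=\sum_{\beta\in\Phi_{I}^{+}}{\rm{Arg}}\big(1+\sqrt{-1}\lambda_{\beta}\big)=\sum_{\beta\in\Phi_{I}^{+}}\arctan(\lambda_{\beta})\ ({\rm{mod}}\ 2\pi),
\]
which is the claimed formula \eqref{liftphase}.

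For the existence statement I would take $\chi_{\phi}$ to be precisely $\psi_{0}$. By the $\partial\overline{\partial}$-lemma on the compact K\"{a}hler manifold $X_{P}$, the cohomologous closed real $(1,1)$-forms $\psi$ and $\psi_{0}$ differ by $\sqrt{-1}\partial\overline{\partial}\phi$ for some $\phi\in C^{\infty}(X_{P})$, so that $\chi_{\phi}:=\psi+\sqrt{-1}\partial\overline{\partial}\phi=\psi_{0}$. The eigenvalues of $\omega_{0}^{-1}\circ\chi_{\phi}$ are then the constants $\lambda_{\beta}$, whence the Lagrangian phase $\Theta_{\omega_{0}}(\chi_{\phi})=\sum_{\beta\in\Phi_{I}^{+}}\arctan(\lambda_{\beta})=\hat{\Theta}\ ({\rm{mod}}\ 2\pi)$ is constant and equals $\hat{\Theta}$. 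Invoking the equivalence between Eq. (\ref{dHYMeq}) and its Lagrangian-phase formulation Eq. (\ref{dHYMeq2v}), this is exactly the assertion that $\chi_{\phi}$ solves the dHYM equation \eqref{DHYMeqTeo}.

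The substantive content is already packaged in Proposition \ref{eigenvalueatorigin}: once the eigenvalues are known to be the point-independent constants $\lambda_{\beta}$, the fully nonlinear elliptic equation collapses to the purely algebraic identity $\sum_{\beta}\arctan(\lambda_{\beta})=\hat{\Theta}$, and the PDE-theoretic difficulty disappears entirely. Consequently the only points requiring genuine care are the branch bookkeeping in the ${\rm{Arg}}$/$\arctan$ identity -- legitimate precisely because every $1+\sqrt{-1}\lambda_{\beta}$ lies in the open right half-plane, so no spurious multiples of $\pi$ enter -- and the verification that $|\Phi_{I}^{+}|=n$, guaranteeing that the product of eigenvalue factors has the correct number of terms to match the top degree.
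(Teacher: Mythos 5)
Your proposal is correct and follows essentially the same route the paper intends: the paper states Theorem \ref{ProofTheo1} as ``a consequence of Proposition \ref{eigenvalueatorigin}'' (deferring the details to \cite{correa2023deformed}), and your argument is exactly that derivation — pass to the unique $G$-invariant representatives, use the constancy of the eigenvalues $\langle \lambda([\psi]),\beta^{\vee}\rangle/\langle\lambda([\omega_{0}]),\beta^{\vee}\rangle$ to reduce $(\omega_{0}+\sqrt{-1}\psi_{0})^{n}=\prod_{\beta}(1+\sqrt{-1}\lambda_{\beta})\,\omega_{0}^{n}$ to a constant multiple of the volume form, and read off both the phase formula and the dHYM equation via the Lagrangian-phase reformulation. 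The branch bookkeeping (each $1+\sqrt{-1}\lambda_{\beta}$ in the right half-plane) and the count $|\Phi_{I}^{+}|=n$ are handled correctly, so there is nothing to add.
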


\begin{remark} 
\label{dHYMisHYM}
In the above setting, given $[\psi] \in H^{1,1}(X_{P},\mathbbm{R})$, by considering the $G$-invariant representative $\chi \in [\psi]$, it follows that the Lagrangian phase of $\chi$ with respect to some $G$-invariant K\"{a}hler metric $\omega_{0}$ is given by\footnote{In this paper we consider the principal value branch of $\arctan(x)$ given by $(-\frac{\pi}{2},\frac{\pi}{2})$.}
\begin{equation}
\label{lagrangianphase}
\Theta_{\omega_{0}}(\chi) = \sum_{\beta \in \Phi_{I}^{+}} \arctan \bigg( \frac{\langle \lambda([\chi]),\beta^{\vee} \rangle}{\langle \lambda([\omega_{0}]),\beta^{\vee} \rangle}\bigg),
\end{equation}
i.e., we have $\hat{\Theta} = \Theta_{\omega_{0}}(\chi)\ (\text{mod} \ 2\pi)$. In summary, the solution of the dHYM equation in $[\psi] \in H^{1,1}(X_{P},\mathbbm{R})$ is given by the unique $G$-invariant representative of the cohomology class $[\psi]$. In particular, if $[\psi] = c_{1}({\bf{E}})$, for some ${\bf{E}} \in {\rm{Pic}}(X_{P})$, combining this last fact with the ideas described in Remark \ref{primitivecalc}, we have a Hermitian structure ${\bf{h}}$ on ${\bf{E}}$, such that the curvature $F_{\nabla}$ of the associated Chern connection $\nabla$ satisfies:
\begin{enumerate}
\item $\sqrt{-1}\Lambda_{\omega_{0}}(F_{\nabla}) = c \mathbbm{1}_{{\bf{E}}}$;
\item ${\rm{Im}}\big ({\rm{e}}^{-\sqrt{-1}\hat{\Theta}}(\omega_{0} - \frac{1}{2\pi}F_{\nabla})^{n}\big ) = 0$.
\end{enumerate}
In conclusion, $\nabla$ is a HYM connection and a dHYM instanton.
\end{remark}

\subsection{The first Chern class of flag varieties} In this subsection, we shall review some basic facts related with the Ricci form of $G$-invariant K\"{a}hler metrics on flag varieties. Let $X_{P}$ be a complex flag variety associated with some parabolic Lie subgroup $P = P_{I} \subset G^{\mathbbm{C}}$. By considering the identification $T_{x_{0}}^{1,0}X_{P} \cong \mathfrak{m} \subset \mathfrak{g}^{\mathbbm{C}}$, such that 

\begin{center}
$\mathfrak{m} = \displaystyle \sum_{\alpha \in \Phi_{I}^{-}} \mathfrak{g}_{\alpha}$,
\end{center}
 we can realize $T^{1,0}X_{P}$ as being a holomoprphic vector bundle, associated with the holomorphic $P$-principal bundle $P \hookrightarrow G^{\mathbbm{C}} \to X_{P}$, given by

\begin{center}

$T^{1,0}X_{P} = \Big \{(U_{i})_{i \in J}, \underline{{\rm{Ad}}}\circ \psi_{i j} \colon U_{i} \cap U_{j} \to {\rm{GL}}(\mathfrak{m}) \Big \}$,

\end{center}
where $\underline{{\rm{Ad}}} \colon P \to {\rm{GL}}(\mathfrak{m})$ is the isotropy representation. From this, we obtain 
\begin{equation}
\label{canonicalbundleflag}
{\bf{K}}_{X_{P}}^{-1} = \det \big(T^{1,0}X_{P} \big) = \Big \{(U_{i})_{i \in J}, \det (\underline{{\rm{Ad}}}\circ \psi_{i j}) \colon U_{i} \cap U_{j} \to \mathbbm{C}^{\times} \Big \}.
\end{equation}
Since $P= [P,P] T(\Delta \backslash I)^{\mathbbm{C}}$, considering $\det \circ \underline{{\rm{Ad}}} \in {\text{Hom}}(T(\Delta \backslash I)^{\mathbbm{C}},\mathbbm{C}^{\times})$, we have 
\begin{equation}
\det \underline{{\rm{Ad}}}(\exp({\bf{t}})) = {\rm{e}}^{{\rm{tr}}({\rm{ad}}({\bf{t}})|_{\mathfrak{m}})} = {\rm{e}}^{- \langle \delta_{P},{\bf{t}}\rangle },
\end{equation}
$\forall {\bf{t}} \in {\rm{Lie}}(T(\Delta \backslash I)^{\mathbbm{C}})$, such that $\delta_{P} = \sum_{\alpha \in \Phi_{I}^{+} } \alpha$. Denoting $\vartheta_{\delta_{P}}^{-1} = \det \circ \underline{{\rm{Ad}}}$, it follows that 
\begin{equation}
\label{charactercanonical}
\vartheta_{\delta_{P}} = \displaystyle \prod_{\alpha \in \Delta \backslash I} \vartheta_{\varpi_{\alpha}}^{\langle \delta_{P},\alpha^{\vee} \rangle} \Longrightarrow {\bf{K}}_{X_{P}}^{-1} = \bigotimes_{\alpha \in \Delta \backslash I}\mathscr{O}_{\alpha}(\ell_{\alpha}),
\end{equation}
such that $\ell_{\alpha} = \langle \delta_{P}, \alpha^{\vee} \rangle, \forall \alpha \in \Delta \backslash I$. Notice that $\lambda({\bf{K}}_{X_{P}}^{-1}) = \delta_{P}$, see Eq. (\ref{weightholomorphicvec}). If we consider the invariant K\"{a}hler metric $\rho_{0} \in \Omega^{1,1}(X_{P})^{G}$ defined by
\begin{equation}
\label{riccinorm}
\rho_{0} = \sum_{\alpha \in \Delta \backslash I}2 \pi \langle \delta_{P}, \alpha^{\vee} \rangle {\bf{\Omega}}_{\alpha},
\end{equation}
it follows that
\begin{equation}
\label{ChernFlag}
c_{1}(X_{P}) = \Big [ \frac{\rho_{0}}{2\pi}\Big].
\end{equation}
By the uniqueness of $G$-invariant representative of $c_{1}(X_{P})$, we have 
\begin{center}
${\rm{Ric}}(\rho_{0}) = \rho_{0}$, 
\end{center}
i.e., $\rho_{0} \in \Omega^{1,1}(X_{P})^{G}$ defines a $G$-ivariant K\"{a}hler-Einstein metric on $X_{P}$ (cf. \cite{MATSUSHIMA}). 
\begin{remark}
Given any $G$-invariant K\"{a}hler metric $\omega$ on $X_{P}$, we have ${\rm{Ric}}(\omega) = \rho_{0}$. Thus, it follows that the smooth function $\frac{\det(\omega)}{\det(\rho_{0})}$ is constant. From this, we obtain
\begin{equation}
{\rm{Vol}}(X_{P},\omega) = \frac{1}{n!}\int_{X_{P}}\omega^{n}  =  \frac{\det(\rho_{0}^{-1} \circ \omega)}{n!} \int_{X_{P}}\rho_{0}^{n}.
\end{equation}
Since $\det(\rho_{0}^{-1} \circ \omega) = \frac{1}{(2\pi)^{n}} \prod_{\beta \in \Phi_{I}^{+}}\frac{\langle \lambda([\omega]),\beta^{\vee} \rangle}{\langle \delta_{P},\beta^{\vee} \rangle}$ and $\frac{1}{n!}\int_{X_{P}}c_{1}(X_{P})^{n} = \prod_{\beta \in \Phi_{I}^{+}} \frac{\langle \delta_{P},\beta^{\vee} \rangle}{\langle \varrho^{+},\beta^{\vee}\rangle}$, we conclude that\footnote{cf. \cite{AZAD}.} 
\begin{equation}
\label{VolKahler}
{\rm{Vol}}(X_{P},\omega) = \prod_{\beta \in \Phi_{I}^{+}} \frac{\langle \lambda([\omega]),\beta^{\vee} \rangle}{\langle \varrho^{+},\beta^{\vee} \rangle},
\end{equation}
where $\varrho^{+} = \frac{1}{2} \sum_{\alpha \in \Phi^{+}}\alpha$. Combining the above formula with the ideas introduced in Remark \ref{primitivecalc} we obtain the following expression for the degree of a holomorphic vector bundle ${\bf{E}} \to X_{P}$ with respect to some $G$-invariant K\"{a}hler metric $\omega$ on $X_{P}$:
\begin{equation}
\label{degreeVB}
\deg_{\omega}({\bf{E}}) = \int_{X_{P}}c_{1}({\bf{E}}) \wedge [\omega]^{n-1} = (n-1)!\Bigg [\sum_{\beta \in \Phi_{I}^{+} } \frac{\langle \lambda({\bf{E}}), \beta^{\vee} \rangle}{\langle \lambda([\omega]), \beta^{\vee}\rangle}\Bigg ] \Bigg [\prod_{\beta \in \Phi_{I}^{+}} \frac{\langle \lambda([\omega]),\beta^{\vee} \rangle}{\langle \varrho^{+},\beta^{\vee} \rangle} \Bigg ],
\end{equation}
such that $\lambda({\bf{E}}) \in \Lambda_{P}$, and $\lambda([\omega]) \in \Lambda_{P} \otimes \mathbbm{R}$. 
\end{remark}

\section{Proof of Theorem A}
\label{SecProoftheoA}
In this section, we will prove the following theorem.
\begin{theorem}[Theorem \ref{TheoA}]
\label{thmArestate}
Fixed the unique ${\rm{SU}}(3)$-invariant K\"{a}hler metric $\omega_{0} \in  c_{1}({\mathbbm{P}}(T_{{\mathbbm{P}^{2}}}))$, then there exist holomorphic vector bundles ${\bf{E}}_{j} \to {\mathbbm{P}}(T_{{\mathbbm{P}^{2}}})$, $j=1,2,3$, satisfying the following:
\begin{enumerate}
\item $\rank({\bf{E}}_{j}) > 1$, $j = 1,2,3$;
\item ${\bf{E}}_{1}$ admits a Hermitian structure ${\bf{h}}$ with associated Chern connection $\nabla$ satisfying
\begin{equation}
\begin{cases}\sqrt{-1}\Lambda_{\omega_{0}}(F_{\nabla}) = c\mathbbm{1}_{{\bf{E}}_{1}}, \\ {\rm{Im}}\Big ({\rm{e}}^{-\sqrt{-1}\hat{\Theta}({\bf{E}}_{1})}\Big (\omega_{0} \otimes \mathbbm{1}_{{\bf{E}}_{1}} - \frac{1}{2\pi}F_{\nabla}\Big )^{3}\Big ) =  0,\end{cases}
\end{equation}
such that $c = \frac{\pi}{8} \mu_{[\omega_{0}]}({\bf{E}}_{1})$ and $\hat{\Theta}({\bf{E}}_{1}) = {\rm{Arg}}\int_{{\mathbbm{P}}(T_{{\mathbbm{P}^{2}}})}{\rm{tr}}\Big (\omega_{0} \otimes \mathbbm{1}_{{\bf{E}}_{1}} - \frac{1}{2\pi}F_{\nabla}\Big)^{3} \ (\text{mod} \ 2\pi)$.
\item ${\bf{E}}_{2}$ admits a Hermitian structure ${\bf{h}}$ with associated Chern connection $\nabla$ satisfying 
\begin{equation}
\begin{cases}\sqrt{-1}\Lambda_{\omega_{0}}(F_{\nabla}) = c\mathbbm{1}_{{\bf{E}}_{2}}, \\ {\rm{Im}}\Big ({\rm{e}}^{-\sqrt{-1}\hat{\Theta}({\bf{E}}_{2})}\Big (\omega_{0} \otimes \mathbbm{1}_{{\bf{E}}_{2}} - \frac{1}{2\pi}F_{\nabla}\Big )^{3}\Big ) \neq  0,\end{cases}
\end{equation}
for $c = \frac{\pi}{8} \mu_{[\omega_{0}]}({\bf{E}}_{2})$ and for all $\hat{\Theta}({\bf{E}}_{2}) \in \mathbbm{R}$.
\item ${\bf{E}}_{3}$ admits a Hermitian structure ${\bf{h}}$ with associated Chern connection $\nabla$ satisfying
\begin{equation}
\begin{cases} \sqrt{-1}\Lambda_{\omega_{0}}(F_{\nabla}) \neq c\mathbbm{1}_{{\bf{E}}_{3}}, \\ {\rm{Im}}\Big ({\rm{e}}^{-\sqrt{-1}\hat{\Theta}({\bf{E}}_{3})}\Big (\omega_{0} \otimes \mathbbm{1}_{{\bf{E}}_{3}} - \frac{1}{2\pi}F_{\nabla}\Big )^{3}\Big ) = 0,\end{cases}
\end{equation}
for all $c \in \mathbbm{R}$, such that $\hat{\Theta}({\bf{E}}_{3}) = = {\rm{Arg}}\int_{{\mathbbm{P}}(T_{{\mathbbm{P}^{2}}})}{\rm{tr}}\Big (\omega_{0} \otimes \mathbbm{1}_{{\bf{E}}_{3}} - \frac{1}{2\pi}F_{\nabla}\Big)^{3} \ (\text{mod} \ 2\pi)$.
\end{enumerate}
In particular, ${\bf{E}}_{1}$ and ${\bf{E}}_{2}$ are slope-polystable, and ${\bf{E}}_{3}$ is slope-unstable.
\end{theorem}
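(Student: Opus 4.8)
The plan is to realize $\mathbbm{P}(T_{\mathbbm{P}^{2}})$ as the full flag variety $X_{P} = \mathrm{SL}(3,\mathbbm{C})/B$ of type $A_{2}$, so that $G = \mathrm{SU}(3)$, $P = B$ is the Borel ($I = \emptyset$), and $\Phi_{I}^{+} = \Phi^{+} = \{\alpha_{1},\alpha_{2},\alpha_{1}+\alpha_{2}\}$. By Proposition \ref{C8S8.2Sub8.2.3P8.2.6} every line bundle is $\mathscr{O}_{\alpha_{1}}(k_{1}) \otimes \mathscr{O}_{\alpha_{2}}(k_{2})$ with weight $\lambda = k_{1}\varpi_{\alpha_{1}} + k_{2}\varpi_{\alpha_{2}}$, and since $\omega_{0} \in c_{1}(X_{P})$ one has $\lambda([\omega_{0}]) = \delta_{P} = 2\varpi_{\alpha_{1}} + 2\varpi_{\alpha_{2}}$, whence $\langle \lambda([\omega_{0}]),\beta^{\vee}\rangle = 2,2,4$ for $\beta = \alpha_{1},\alpha_{2},\alpha_{1}+\alpha_{2}$. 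I would build each ${\bf{E}}_{j}$ as a Whitney sum of two such line bundles, each equipped with the invariant Hermitian metric of Remark \ref{primitivecalc}; then $F_{\nabla}$ is block diagonal and invariant, each block being the invariant representative of the $c_{1}$ of the corresponding summand.

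The crux is that for such a block-diagonal invariant connection Eq.~(\ref{dHYMhigher}) decouples. By Proposition \ref{eigenvalueatorigin}, a summand ${\bf{L}}$ with $\lambda({\bf{L}}) = k_{1}\varpi_{\alpha_{1}} + k_{2}\varpi_{\alpha_{2}}$ has constant eigenvalues ${\bf{q}}_{\alpha_{1}} = k_{1}/2$, ${\bf{q}}_{\alpha_{2}} = k_{2}/2$, ${\bf{q}}_{\alpha_{1}+\alpha_{2}} = (k_{1}+k_{2})/4$, so that pointwise $(\omega_{0} + \sqrt{-1}\chi)^{3} = \zeta({\bf{L}})\,\omega_{0}^{3}$ with $\zeta({\bf{L}}) := \prod_{\beta \in \Phi^{+}}(1 + \sqrt{-1}\,{\bf{q}}_{\beta})$. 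Hence Eq.~(\ref{dHYMhigher}) holds exactly when the $\zeta$'s of the summands all have the same argument modulo $\pi$ (equal to $\hat{\Theta}({\bf{E}})$), while $\sqrt{-1}\Lambda_{\omega_{0}}(F_{\nabla}) = c\mathbbm{1}$ holds iff all summands share $\tfrac{\sqrt{-1}}{2\pi}\Lambda_{\omega_{0}}(F_{\nabla}) = \tfrac{3(k_{1}+k_{2})}{4}$, i.e. the same $k_{1}+k_{2}$. Clearing denominators, $\zeta({\bf{L}})$ is a positive multiple of the Gaussian integer $(2+\sqrt{-1}k_{1})(2+\sqrt{-1}k_{2})(4+\sqrt{-1}(k_{1}+k_{2}))$, whose imaginary part is $(k_{1}+k_{2})(12 - k_{1}k_{2})$ and real part $16 - 4k_{1}k_{2} - 2(k_{1}+k_{2})^{2}$. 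This turns both the phase-alignment and the slope conditions into arithmetic in $(k_{1},k_{2})$.

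With this dictionary the polystable cases are transparent. For ${\bf{E}}_{1}$ I would take ${\bf{L}} \oplus \sigma {\bf{L}}$, where $\sigma$ interchanges $\alpha_{1},\alpha_{2}$, e.g. $\mathscr{O}_{\alpha_{1}}(a)\otimes\mathscr{O}_{\alpha_{2}}(b) \oplus \mathscr{O}_{\alpha_{1}}(b)\otimes\mathscr{O}_{\alpha_{2}}(a)$ with $a \neq b$: the summands share $k_{1}+k_{2} = a+b$, giving $\sqrt{-1}\Lambda_{\omega_{0}}F_{\nabla} = c\mathbbm{1}$, and since $\zeta$ is symmetric in $(k_{1},k_{2})$ they share the same $\zeta$, so Eq.~(\ref{dHYMhigher}) holds. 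For ${\bf{E}}_{2}$ I would keep $k_{1}+k_{2}$ fixed (preserving the HYM condition) but redistribute mass so the two central charges acquire distinct arguments mod $\pi$, e.g. $\mathscr{O}_{\alpha_{1}}(4) \oplus \mathscr{O}_{\alpha_{1}}(2)\otimes\mathscr{O}_{\alpha_{2}}(2)$, for which $\zeta \propto -16 + 48\sqrt{-1}$ and $\zeta' \propto -32 + 32\sqrt{-1}$ are not positively proportional; since no single $\hat{\Theta}$ can align both blocks, Eq.~(\ref{dHYMhigher}) fails for every $\hat{\Theta}({\bf{E}}_{2}) \in \mathbbm{R}$.

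The decisive case is ${\bf{E}}_{3}$, where I must align the phases while forcing \emph{different} values of $k_{1}+k_{2}$. The trick is to make both central charges real by imposing $k_{1}k_{2} = 12$, which kills $(k_{1}+k_{2})(12 - k_{1}k_{2})$; taking for instance $\mathscr{O}_{\alpha_{1}}(3)\otimes\mathscr{O}_{\alpha_{2}}(4) \oplus \mathscr{O}_{\alpha_{1}}(2)\otimes\mathscr{O}_{\alpha_{2}}(6)$, both $\zeta,\zeta'$ are negative reals, hence aligned ($\hat{\Theta}({\bf{E}}_{3}) = \pi$) so Eq.~(\ref{dHYMhigher}) holds, while $k_{1}+k_{2} = 7 \neq 8$ forces $\sqrt{-1}\Lambda_{\omega_{0}}F_{\nabla} \neq c\mathbbm{1}$ for every $c$. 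The stability claims then follow from the degree formula (\ref{degreeVB}), which here gives $\deg_{\omega_{0}}(\mathscr{O}_{\alpha_{1}}(k_{1})\otimes\mathscr{O}_{\alpha_{2}}(k_{2})) = 12(k_{1}+k_{2})$ (with $\mathrm{Vol}(X_{P},\omega_{0}) = 8$): equal $k_{1}+k_{2}$ makes ${\bf{E}}_{1},{\bf{E}}_{2}$ direct sums of equal-slope line bundles, hence slope-polystable, whereas in ${\bf{E}}_{3}$ the higher-slope summand strictly destabilizes. Along the way the normalization $c = \tfrac{\pi}{8}\mu_{[\omega_{0}]}({\bf{E}}_{j})$ reduces to $\mu = 12(k_{1}+k_{2})$ together with the identity $\tfrac{\pi}{8}\cdot 12s = \tfrac{3\pi s}{2} = 2\pi \cdot \tfrac{3s}{4}$. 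I expect the main obstacle to be precisely the ${\bf{E}}_{3}$ construction---producing two line bundles of genuinely different slope whose central charges share a phase mod $\pi$, a Diophantine constraint $\mathrm{Im}(\zeta\overline{\zeta'}) = 0$---which the reality trick $k_{1}k_{2}=12$ resolves; the remaining care is bookkeeping the ``for all $\hat{\Theta}$'' and ``for all $c$'' quantifiers and checking non-degeneracy $\zeta + \zeta' \neq 0$ so that $\hat{\Theta}({\bf{E}})$ is well defined.
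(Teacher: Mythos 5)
Your proposal is correct and follows essentially the same route as the paper: realize ${\mathbbm{P}}(T_{{\mathbbm{P}^{2}}})$ as ${\rm{SL}}_{3}(\mathbbm{C})/B$, take Whitney sums of invariant line bundles with block-diagonal invariant Chern connections, and reduce both the HYM condition (equality of $k_{1}+k_{2}$) and the dHYM condition (phase alignment of the constant factors $(\omega_{0}+\sqrt{-1}\chi)^{3}/\omega_{0}^{3}$) to arithmetic in the weights, exactly as in the paper's Lemmas \ref{L1}--\ref{L3}. The only differences are cosmetic --- you phrase the alignment via the Gaussian integer $\zeta$ rather than the sum of arctangents of Theorem \ref{ProofTheo1}, and your choices for ${\bf{E}}_{1}$ (symmetric pair) and ${\bf{E}}_{2}$ ($(4,0)\oplus(2,2)$ instead of $(2,-1)\oplus(3,-2)$) differ from the paper's while embodying the same idea, and your ${\bf{E}}_{3}$ with $k_{1}k_{2}=12$ is the paper's example verbatim.
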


The proof which we will present for the above result is constructive, which means that we will construct explicitly the examples of Hermitian connections $\nabla$ on higher rank holomorphic vector bundles ${\bf{E}} \to {\mathbbm{P}}(T_{{\mathbbm{P}^{2}}})$ which illustrate the following cases:
\begin{itemize}
\item {\bf{Type I:}} $\nabla$ is both a HYM connection and a dHYM connection;
\item {\bf{Type II:}} $\nabla$ is a HYM connection but not a dHYM connection;
\item {\bf{Type III:}} $\nabla$ is a dHYM connection but not a HYM connection.
\end{itemize}
As we have seen (Remark \ref{dHYMisHYM}), every line bundle ${\bf{E}} \to {\mathbbm{P}}(T_{{\mathbbm{P}^{2}}})$ admits a Hermitian connection which is Type I. However, explicit examples of Hermitian connections on higher rank holomorphic vector bundles of Type I, or which illustrate Type II and Type III cases, are not known in the literature. In fact, as far as the author knows, no concrete examples of dHYM connection on higher rank holomorphic vector bundles are known so far. It is worth to point out that the first example which illustrate the existence of higher rank dHYM connection was provided in \cite[Example 2.18]{dervan2020z}. 

\subsection{Line bundle case} Consider the complex simple Lie group $G^{\mathbbm{C}} = {\rm{SL}}_{3}(\mathbbm{C})$. In this case, the structure of the associated Lie algebra $\mathfrak{sl}_{3}(\mathbbm{C})$ can be completely determined by means of its Dynkin diagram
\begin{center}
${\dynkin[labels={\alpha_{1},\alpha_{2}},scale=3]A{oo}} $
\end{center}
More precisely, fixed the Cartan subalgebra $\mathfrak{h} \subset \mathfrak{sl}_{3}(\mathbbm{C})$ of diagonal matrices, we have the associated simple root system given by $\Delta  = \{\alpha_{1},\alpha_{2}\}$, such that 
\begin{center}
$\alpha_{j}({\rm{diag}}(d_{1},d_{2},d_{3})) = d_{j} - d_{j+1}$, $j = 1,2$.
\end{center}
$\forall {\rm{diag}}(d_{1},d_{2},d_{3}) \in \mathfrak{h}$. The set of positive roots in this case is given by 
\begin{center}
$\Phi^{+} = \{\alpha_{1}, \alpha_{2}, \alpha_{3} = \alpha_{1} + \alpha_{2}\}$. 
\end{center}
Considering the Cartan-Killing form\footnote{In this case, we have $\kappa(X,Y) = 6{\rm{tr}}(XY), \forall X,Y \in \mathfrak{sl}_{3}(\mathbbm{C})$, see for instance \cite[Chapter 10, \S 4]{procesi2007lie}.} $\kappa(X,Y) = {\rm{tr}}({\rm{ad}}(X){\rm{ad}}(Y)), \forall X,Y \in \mathfrak{sl}_{3}(\mathbbm{C})$, it follows that $\alpha_{j} = \kappa(\cdot,h_{\alpha_{j}})$, $j =1,2,3$, such that\footnote{Notice that $\langle \alpha_{j},\alpha_{j} \rangle = \alpha_{j}(h_{\alpha_{j}}) = \frac{1}{3}, \forall j = 1,2,3.$} 
\begin{equation}
h_{\alpha_{1}} =\frac{1}{6}(E_{11} - E_{22}), \ \ h_{\alpha_{2}} =\frac{1}{6}(E_{22} - E_{33}), \ \ h_{\alpha_{3}} =\frac{1}{6}(E_{11} - E_{33}),
\end{equation}
here we consider the matrices $E_{ij}$ as being the elements of the standard basis of ${\mathfrak{gl}}_{3}(\mathbbm{C})$. Moreover, we have the following relation between simple roots and fundametal weights:
\begin{center}
$\displaystyle{\begin{pmatrix}
\alpha_{1} \\ 
\alpha_{2}
\end{pmatrix} = \begin{pmatrix} \ \ 2 & -1 \\
-1 & \ \ 2\end{pmatrix} \begin{pmatrix}
\varpi_{\alpha_{1}} \\ 
\varpi_{\alpha_{2}}
\end{pmatrix}, \ \ \ \begin{pmatrix}
\varpi_{\alpha_{1}} \\ 
\varpi_{\alpha_{2}}
\end{pmatrix} = \frac{1}{3}\begin{pmatrix} 2 & 1 \\
1 & 2\end{pmatrix} \begin{pmatrix}
\alpha_{1} \\ 
\alpha_{2}
\end{pmatrix}},$
\end{center}
here we consider the Cartan matrix $C = (C_{ij})$ of $\mathfrak{sl}_{3}(\mathbbm{C})$ given by 
\begin{equation}
\label{Cartanmatrix}
C = \begin{pmatrix}
 \ \ 2 & -1 \\
-1 & \ \ 2 
\end{pmatrix}, \ \ C_{ij} = \frac{2\langle \alpha_{i}, \alpha_{j} \rangle}{\langle \alpha_{j}, \alpha_{j} \rangle},
\end{equation}
for more details on the above subject, see for instance \cite{Humphreys}. 
Fixed the standard Borel subgroup $B \subset {\rm{SL}}_{3}(\mathbbm{C})$, i.e.,
\begin{center}
$B = \Bigg \{ \begin{pmatrix} \ast & \ast & \ast \\
0 & \ast & \ast \\
0 & 0 & \ast \end{pmatrix} \in {\rm{SL}}_{3}(\mathbbm{C})\Bigg\},$
\end{center}
we consider the flag variety obtained from $I = \emptyset$, i.e., the homogeneous Fano threefold given by the Wallach space ${\mathbbm{P}}(T_{{\mathbbm{P}^{2}}}) = {\rm{SL}}_{3}(\mathbbm{C})/B$. In this particular case, we have the following:
\begin{enumerate}
\item[(i)] $H^{2}({\mathbbm{P}}(T_{{\mathbbm{P}^{2}}}),\mathbbm{R}) = H^{1,1}({\mathbbm{P}}(T_{{\mathbbm{P}^{2}}}),\mathbbm{R}) = \mathbbm{R}[{\bf{\Omega}}_{\alpha_{1}}] \oplus \mathbbm{R}[{\bf{\Omega}}_{\alpha_{2}}]$;
\item[(ii)] ${\rm{Pic}}({\mathbbm{P}}(T_{{\mathbbm{P}^{2}}})) = \Big \{ \mathscr{O}_{\alpha_{1}}(s_{1}) \otimes \mathscr{O}_{\alpha_{2}}(s_{2}) \ \Big | \ s_{1}, s_{2} \in \mathbbm{Z}\Big \}$.
\end{enumerate}
Let $\omega_{0}$ be the unique ${\rm{SU}}(3)$-invariant K\"{a}hler metric on ${\mathbbm{P}}(T_{{\mathbbm{P}^{2}}})$, such that $[\omega_{0}] = c_{1}({\mathbbm{P}}(T_{{\mathbbm{P}^{2}}}))$\footnote{It is worth pointing out that there is nothing special with this choice. In fact, all the computations presented in this example work for an arbitrary choice of ${\rm{SU}}(3)$-invariant (integral) K\"{a}hler class on ${\mathbbm{P}}(T_{{\mathbbm{P}^{2}}})$.}. Since $\lambda({\bf{K}}_{{\mathbbm{P}}(T_{{\mathbbm{P}^{2}}})}^{-1}) = \delta_{B} = 2(\varpi_{\alpha_{1}} + \varpi_{\alpha_{2}})$, from Eq. (\ref{ChernFlag}), it follows that 
\begin{equation}
\omega_{0} = \langle \delta_{B}, \alpha_{1}^{\vee} \rangle {\bf{\Omega}}_{\alpha_{1}} + \langle \delta_{B}, \alpha_{2}^{\vee} \rangle {\bf{\Omega}}_{\alpha_{2}} = 2 \big ({\bf{\Omega}}_{\alpha_{1}} + {\bf{\Omega}}_{\alpha_{2}}\big),
\end{equation}
in particular, notice that $\lambda([\omega_{0}]) = \delta_{B} = 2\varrho^{+}$, thus
\begin{equation}
{\rm{Vol}}({\mathbbm{P}}(T_{{\mathbbm{P}^{2}}}),\omega_{0}) = \prod_{j = 1}^{3}\frac{\langle \delta_{B},\alpha_{j}^{\vee} \rangle}{\langle \varrho^{+},\alpha_{j}^{\vee} \rangle} = 8,
\end{equation}
see Eq. (\ref{VolKahler}). Given any $[\psi] \in H^{1,1}({\mathbbm{P}}(T_{{\mathbbm{P}^{2}}}),\mathbbm{R})$, from Theorem \ref{ProofTheo1}, we have
\begin{equation}
\hat{\Theta} = {\rm{Arg}}\int_{{\mathbbm{P}}(T_{{\mathbbm{P}^{2}}})}(\omega_{0} + \sqrt{-1}\psi)^{3} = \sum_{j = 1}^{3} \arctan \bigg( \frac{\langle \lambda([\psi]),\alpha_{j}^{\vee} \rangle}{\langle \delta_{B},\alpha_{j}^{\vee} \rangle}\bigg) \ (\text{mod} \ 2\pi),
\end{equation}
notice that, since $I = \emptyset$, it follows that  $\Phi_{I}^{+} = \Phi^{+} = \{\alpha_{1}, \alpha_{2}, \alpha_{3} = \alpha_{1} + \alpha_{2}\}$. Therefore, if we suppose that $[\psi] = s_{1}[{\bf{\Omega}}_{\alpha_{1}}] + s_{2}[{\bf{\Omega}}_{\alpha_{2}}]$, for some $s_{1},s_{2} \in \mathbbm{R}$, by considering the Cartan matrix $C = (C_{ij})$ of $\mathfrak{sl}_{3}(\mathbbm{C})$ (see Eq. (\ref{Cartanmatrix})), we obtain the following:
\begin{enumerate}
\item $\langle \delta_{B},\alpha_{1}^{\vee} \rangle = \langle \delta_{B},\alpha_{2}^{\vee} \rangle = 2$ and $\langle \delta_{B},\alpha_{3}^{\vee} \rangle = 4$;
\item $\langle \lambda([\psi]),\alpha_{1}^{\vee} \rangle = s_{1}$, $\langle \lambda([\psi]),\alpha_{2}^{\vee} \rangle = s_{2}$, $\langle \lambda([\psi]),\alpha_{3}^{\vee} \rangle = s_{1} + s_{2}.$
\end{enumerate}
From above, we conclude that 
\begin{equation}
\label{lagrangianphasesu3}
\hat{\Theta}= \arctan \bigg ( \frac{s_{1}}{2}\bigg) + \arctan \bigg ( \frac{s_{2}}{2}\bigg) + \arctan \bigg(\frac{s_{1} + s_{2}}{4} \bigg) \ (\text{mod} \ 2\pi).
\end{equation}
From Eq. (\ref{lagrangianphase}), given an arbitrary ${\rm{SU}}(3)$-invariant $(1,1)$-form $\chi = s_{1}{\bf{\Omega}}_{\alpha_{1}} + s_{2}{\bf{\Omega}}_{\alpha_{2}}$, we have the following concrete expression for its Lagrangian phase w.r.t. $\omega_{0}$:
\begin{equation}
\label{phaseGclass}
\Theta_{\omega_{0}}(\chi) = \arctan \bigg ( \frac{s_{1}}{2}\bigg) + \arctan \bigg ( \frac{s_{2}}{2}\bigg) + \arctan \bigg(\frac{s_{1} + s_{2}}{4} \bigg).
\end{equation}
Also, as we have seen in Remark \ref{dHYMisHYM}, if $[\chi] = c_{1}({\bf{E}})$, for some ${\bf{E}} \in {\rm{Pic}}({\mathbbm{P}}(T_{{\mathbbm{P}^{2}}}))$, then we have a Hermitian metric ${\bf{h}}$ on ${\bf{E}}$ for which that associated Chern connection $\nabla$ satisfies
\begin{enumerate}
\item $\sqrt{-1}\Lambda_{\omega_{0}}(F_{\nabla}) = c \mathbbm{1}_{{\bf{E}}}$ (HYM);
\item ${\rm{Im}}\big ({\rm{e}}^{-\sqrt{-1}\hat{\Theta}}(\omega_{0} - \frac{1}{2\pi}F_{\nabla})^{3}\big ) = 0$ (dHYM).
\end{enumerate}
\begin{remark}
\label{HYMinstantonlinebundle}
Let us describe $\nabla$ explicitly. From Proposition \ref{C8S8.2Sub8.2.3P8.2.6}, we have 
\begin{equation}
{\bf{E}} = \mathscr{O}_{\alpha_{1}}(s_{1}) \otimes \mathscr{O}_{\alpha_{2}}(s_{2}),
\end{equation}
such that $s_{1},s_{1} \in \mathbbm{Z}$. Given an open set $U \subset {\mathbbm{P}}(T_{{\mathbbm{P}^{2}}})$ which trivializes both ${\bf{E}} \to {\mathbbm{P}}(T_{{\mathbbm{P}^{2}}})$ and $B \hookrightarrow {\rm{SL}}_{3}(\mathbbm{C}) \to {\mathbbm{P}}(T_{{\mathbbm{P}^{2}}})$, and denoting by $w$ the fiber coordinate in ${\bf{E}}|_{U}$, we can construct a Hermitian structure ${\bf{h}}$ on ${\bf{E}}$ by gluing the local Hermitian structures 
\begin{equation}
{\bf{h}}_{U} = \frac{w\overline{w}} {||s_{U}v_{\varpi_{\alpha_{1}}}^{+}||^{2s_{1}}||s_{U}v_{\varpi_{\alpha_{2}}}^{+}||^{2s_{2}}},
\end{equation}
where $s_{U} \colon U \subset {\mathbbm{P}}(T_{{\mathbbm{P}^{2}}})\to {\rm{SL}}_{3}(\mathbbm{C})$ is some local section, here we consider $||\cdot||$ defined by some fixed ${\rm{SU}}(3)$-invariant inner product on $V(\varpi_{\alpha_{k}})$, $k = 1,2$. From this, we can describe the associated Chern connection $\nabla$ (locally) by
\begin{equation}
\nabla|_{U} = {\rm{d}} + A_{U},
\end{equation}
where
\begin{equation}
A_{U} = - \partial \log \Big ( ||s_{U}v_{\varpi_{\alpha_{1}}}^{+}||^{2s_{1}}||s_{U}v_{\varpi_{\alpha_{2}}}^{+}||^{2s_{2}}\Big).
\end{equation}
In particular, consider $U = U^{-}(B) \subset {\mathbbm{P}}(T_{{\mathbbm{P}^{2}}})$, such that
\begin{equation}
U^{-}(B) = \Bigg \{ \begin{pmatrix}
1 & 0 & 0 \\
z_{1} & 1 & 0 \\                  
z_{2}  & z_{3} & 1
 \end{pmatrix}B \ \Bigg | \ z_{1},z_{2},z_{3} \in \mathbbm{C} \Bigg \} \ \ \ ({\text{opposite big cell}}),
\end{equation}
The open set above is dense and contractible, so it trivializes the desired bundles over ${\mathbbm{P}}(T_{{\mathbbm{P}^{2}}})$. By taking the local section $s_{U} \colon U^{-}(B) \to {\rm{SL}}_{3}(\mathbbm{C})$, such that $s_{U}(nB) = n$, $\forall nB \in U^{-}(B)$, and considering
\begin{center}
$V(\varpi_{\alpha_{1}}) = \mathbbm{C}^{3}$ \ \  and \ \ $V(\varpi_{\alpha_{2}}) = \bigwedge^{2}(\mathbbm{C}^{3}),$
\end{center}
where $v_{\varpi_{\alpha_{1}}}^{+} = e_{1}$, and $v_{\varpi_{\alpha_{2}}}^{+} = e_{1} \wedge e_{2}$, fixed $||\cdot||$ defined by the standard ${\rm{SU}}(3)$-invariant inner product on $\mathbbm{C}^{3}$ and $\bigwedge^{2}(\mathbbm{C}^{3})$, we obtain
\begin{equation}
A_{U^{-}(B)} = - \partial \log \Bigg [ \bigg ( 1 + \sum_{i = 1}^{2}|z_{i}|^{2} \bigg )^{s_{1}} \bigg (1 + |z_{3}|^{2} + \bigg | \det \begin{pmatrix}
z_{1} & 1  \\                  
z_{2}  & z_{3} 
 \end{pmatrix} \bigg |^{2} \bigg )^{s_{2}} \Bigg ],
\end{equation}
From above we obtain an explicit example of HYM connection which is also a dHYM connection on ${\bf{E}} = \mathscr{O}_{\alpha_{1}}(s_{1}) \otimes \mathscr{O}_{\alpha_{2}}(s_{2})$. 
\end{remark}

\subsection{dHYM connections and polystable holomorphc vector bundles} 
\label{dHYM_polystable}
Since every line bundle ${\bf{L}} \to {\mathbbm{P}}(T_{{\mathbbm{P}^{2}}})$ admits a dHYM connection, a trivial way to construct higher rank dHYM connections is taking Whitney sums of the form:
\begin{equation}
{\bf{E}} := \underbrace{{\bf{L}} \oplus \cdots \oplus {\bf{L}}}_{r-{\text{times}}},
\end{equation}
In the above setting, the dHYM connection on ${\bf{L}}$ induces a Hermitian connection $\nabla$ on ${\bf{E}}$, such that 
\begin{enumerate}
\item $\nabla^{0,1} = \overline{\partial}$;
\item $\sqrt{-1}\Lambda_{\omega_{0}}(F_{\nabla}) = c \mathbbm{1}_{{\bf{E}}}$ (HYM);
\item ${\rm{Im}}\big ({\rm{e}}^{-\sqrt{-1}\hat{\Theta}({\bf{E}})}(\omega_{0} \otimes \mathbbm{1}_{{\bf{E}}} - \frac{1}{2\pi}F_{\nabla})^{3}\big ) = 0$ (dHYM).
\end{enumerate}
The above fact is a consequence of the ideas presented in Remark \ref{dHYMisHYM}. Notice that, in the above setting, we have 
\begin{equation}
\hat{\Theta}({\bf{E}}) = {\rm{Arg}}\int_{{\mathbbm{P}}(T_{{\mathbbm{P}^{2}}})}{\rm{tr}}\Big (\omega_{0} \otimes \mathbbm{1}_{{\bf{E}}} - \frac{1}{2\pi}F_{\nabla}\Big)^{3} = \hat{\Theta}({\bf{L}}) \ (\text{mod} \ 2\pi),
\end{equation}
where  
\begin{equation}
\label{phaselinebundle}
\hat{\Theta}({\bf{L}}) = \underbrace{\arctan \bigg ( \frac{s_{1}}{2}\bigg) + \arctan \bigg ( \frac{s_{2}}{2}\bigg) + \arctan \bigg(\frac{s_{1} + s_{2}}{4} \bigg)}_{\Theta_{\omega_{0}}(\chi_{\bf{L}})} \ (\text{mod} \ 2\pi),
\end{equation}
such that $s_{j} = \langle \lambda({\bf{L}}),\alpha_{j}^{\vee} \rangle$, $j = 1,2$, and $[\chi_{{\bf{L}}}] = c_{1}({\bf{L}})$, see Eq. (\ref{weightholomorphicvec}) and Eq. (\ref{phaseGclass}). 
\begin{remark}
\label{trivialcase}
From above, we have several examples of higher rank dHYM instantons. We shall refer to this class of examples as trivial dHYM instantons.
\end{remark}
In order to construct examples of solutions to dHYM equation which are non-trivial we proceed in the following way. Given some real number $m \in \mathbbm{R}$, we define the following subsets of the Picard group ${\rm{Pic}}({\mathbbm{P}}(T_{{\mathbbm{P}^{2}}}))$:
\begin{itemize}
\item $\mathcal{D}_{m}(\omega_{0}):= \big \{ {\bf{L}} \in {\rm{Pic}}({\mathbbm{P}}(T_{{\mathbbm{P}^{2}}})) \ \big | \ \Lambda_{\omega_{0}}(\chi_{{\bf{L}}}) = m\big \}$,
\item $\mathcal{L}_{m}(\omega_{0}):= \big \{ {\bf{L}} \in {\rm{Pic}}({\mathbbm{P}}(T_{{\mathbbm{P}^{2}}})) \ \big | \ \Theta_{\omega_{0}}(\chi_{\bf{L}}) = m\big \}$,
\end{itemize}
such that $\chi_{{\bf{L}}} \in c_{1}({\bf{L}})$ denotes the associated ${\rm{SU}}(3)$-invariant representative. Notice that $\mathcal{D}_{m}(\omega_{0})$ and $\mathcal{L}_{m}(\omega_{0})$ can be described, respectively, by the following concrete equations:
\begin{itemize}
\item Given ${\bf{L}} = \mathscr{O}_{\alpha_{1}}(s_{1}) \otimes  \mathscr{O}_{\alpha_{2}}(s_{2}) \in \mathcal{D}_{m}(\omega_{0})$, then 
\begin{equation}
\label{eqHYM}
\Lambda_{\omega_{0}}(\chi_{{\bf{L}}}) = m \iff \frac{3}{4}(s_{1} + s_{2}) = m;
\end{equation}
\item Given ${\bf{L}} = \mathscr{O}_{\alpha_{1}}(s_{1}) \otimes  \mathscr{O}_{\alpha_{2}}(s_{2}) \in \mathcal{L}_{m}(\omega_{0})$, then 
\begin{equation}
\Theta_{\omega_{0}}(\chi_{\bf{L}}) = m \iff \arctan \bigg ( \frac{s_{1}}{2}\bigg) + \arctan \bigg ( \frac{s_{2}}{2}\bigg) + \arctan \bigg(\frac{s_{1} + s_{2}}{4} \bigg) = m.
\end{equation}
\end{itemize}
From above, in particular, one can check that $m = 0 \Rightarrow \mathcal{D}_{0}(\omega_{0}) = \mathcal{L}_{0}(\omega_{0})$. Moreover, since
\begin{equation}
\deg_{\omega_{0}}({\bf{L}}) = \int_{{\mathbbm{P}}(T_{{\mathbbm{P}^{2}}})} \chi_{{\bf{L}}} \wedge \omega_{0}^{2} = \frac{1}{3}\Lambda_{\omega_{0}}(\chi_{{\bf{L}}})\int_{{\mathbbm{P}}(T_{{\mathbbm{P}^{2}}})}\omega_{0}^{3} = \frac{3!}{3}\Lambda_{\omega_{0}}(\chi_{{\bf{L}}}) {\rm{Vol}}({\mathbbm{P}}(T_{{\mathbbm{P}^{2}}}),\omega_{0}),
\end{equation}
see Eq. (\ref{degreeVB}), we conclude that 
\begin{equation}
\mathcal{D}_{0}(\omega_{0}) = \mathcal{L}_{0}(\omega_{0}) = \underbrace{\Big \{ {\bf{L}} \in {\rm{Pic}}({\mathbbm{P}}(T_{{\mathbbm{P}^{2}}})) \ \big | \  \deg_{\omega_{0}}({\bf{L}}) = 0\Big \}}_{ = {\rm{Pic}}_{\omega_{0}}^{0}({\mathbbm{P}}(T_{{\mathbbm{P}^{2}}}))}.
\end{equation}
Therefore, it follows that $\mathcal{D}_{0}(\omega_{0}) = \mathcal{L}_{0}(\omega_{0})$ is the subgroup 
\begin{center}
${\rm{Pic}}_{\omega_{0}}^{0}({\mathbbm{P}}(T_{{\mathbbm{P}^{2}}})) \subset {\rm{Pic}}({\mathbbm{P}}(T_{{\mathbbm{P}^{2}}}))$.
\end{center}
From Eq. (\ref{eqHYM}), we have the following description in terms of generators
\begin{equation}
\mathcal{D}_{0}(\omega_{0}) = \mathcal{L}_{0}(\omega_{0}) = {\rm{Pic}}_{\omega_{0}}^{0}({\mathbbm{P}}(T_{{\mathbbm{P}^{2}}})) = \Big \langle  \mathscr{O}_{\alpha_{1}}(1) \otimes  \mathscr{O}_{\alpha_{2}}(-1)\Big \rangle.
\end{equation}
Hence, taking distinct elements ${\bf{L}}_{1},\ldots,{\bf{L}}_{r} \in {\rm{Pic}}_{\omega_{0}}^{0}({\mathbbm{P}}(T_{{\mathbbm{P}^{2}}}))$, one can define
\begin{equation}
{\bf{E}} := {\bf{L}}_{1} \oplus \cdots \oplus {\bf{L}}_{r}.
\end{equation}
Denoting by $\chi_{j}$, the {\rm{SU}}(3)-invariant representative of $c_{1}({\bf{L}}_{j})$, $\forall j =1,\ldots,r$, we have an induced Hermitian structure ${\bf{h}}$ on ${\bf{E}}$, such that the curvature of the associated Chern connection $\nabla$ satisfies
\begin{equation}
\frac{\sqrt{-1}}{2\pi}F_{\nabla} = \begin{pmatrix} \chi_{1} & \cdots & 0 \\
 \vdots & \ddots & \vdots \\
 0 & \cdots & \chi_{r}\end{pmatrix}.
\end{equation}
By construction, we have 
\begin{equation}
\sqrt{-1}\Lambda_{\omega_{0}}(F_{\nabla}) = 0,
\end{equation}
i.e., $\nabla$ is an HYM connection, see Remark \ref{primitivecalc}. In particular, notice that $\mu_{[\omega_{0}]}({\bf{E}}) = 0$. Moreover, since
\begin{equation}
\Big (\omega_{0} \otimes \mathbbm{1}_{{\bf{E}}} - \frac{1}{2\pi}F_{\nabla} \Big )^{3} = \begin{pmatrix} (\omega_{0} + \sqrt{-1}\chi_{1})^{3} & \cdots & 0 \\
 \vdots & \ddots & \vdots \\
 0 & \cdots & (\omega_{0} + \sqrt{-1}\chi_{r})^{3}\end{pmatrix},
\end{equation}
and $\Theta_{\omega_{0}}(\chi_{j}) = 0$, $\forall j = 1,\ldots,r$, it follows from Theorem \ref{ProofTheo1} (see also Remark \ref{dHYMisHYM}) that 
\begin{equation}
{\rm{Im}}\Big ( {\rm{e}}^{-\sqrt{-1} \hat{\Theta}({\bf{E}})}\big (\omega_{0} \otimes \mathbbm{1}_{{\bf{E}}} - \frac{1}{2\pi}F_{\nabla} \big )^{3} \Big ) = 0,
\end{equation}
such that 
\begin{equation}
\hat{\Theta}({\bf{E}}) = {\rm{Arg}}\int_{{\mathbbm{P}}(T_{{\mathbbm{P}^{2}}})}{\rm{tr}}\Big (\omega_{0} \otimes \mathbbm{1}_{{\bf{E}}} - \frac{1}{2\pi}F_{\nabla}\Big)^{3} = 0 \ (\text{mod} \ 2\pi)
\end{equation}
Thus, we have that $\nabla$ is a non-trivial example of dHYM connection. The class of examples presented above illustrates the higher rank examples of Hermitian connections of Type I. The above construction can be summarized in the following lemma.
\begin{lemma}
\label{L1}
There exists a Hermitian homlomorphic vector bundle $({\bf{E}},{\bf{h}}) \to ({\mathbbm{P}}(T_{{\mathbbm{P}^{2}}}),\omega_{0})$, with $\rank({\bf{E}}) > 1$, such that the associated Chern connection $\nabla$ satisfies 
\begin{equation}
\begin{cases}\sqrt{-1}\Lambda_{\omega_{0}}(F_{\nabla}) = c\mathbbm{1}_{{\bf{E}}}, \\ {\rm{Im}}\Big ({\rm{e}}^{-\sqrt{-1}\hat{\Theta}({\bf{E}})}\Big (\omega_{0} \otimes \mathbbm{1}_{{\bf{E}}} - \frac{1}{2\pi}F_{\nabla}\Big )^{3}\Big ) =  0,\end{cases}
\end{equation}
where $c = \frac{\pi}{8} \mu_{[\omega_{0}]}({\bf{E}})$ and $\hat{\Theta}({\bf{E}}) = {\rm{Arg}}\int_{{\mathbbm{P}}(T_{{\mathbbm{P}^{2}}})}{\rm{tr}}\Big (\omega_{0} \otimes \mathbbm{1}_{{\bf{E}}} - \frac{1}{2\pi}F_{\nabla}\Big)^{3} \ (\text{mod} \ 2\pi)$. In particular, we have that ${\bf{E}}$ is slope-polystable.
\end{lemma}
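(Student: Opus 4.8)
The plan is to produce ${\bf{E}}$ as a Whitney sum of pairwise non-isomorphic degree-zero line bundles. The point is that for such a sum the curvature of the induced Chern connection is block-diagonal, so that the rank-$r$ system decouples into $r$ independent copies of the line-bundle problem already solved in Theorem \ref{ProofTheo1} and Remark \ref{dHYMisHYM}.

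First I would pin down the relevant line bundles. Writing ${\bf{L}} = \mathscr{O}_{\alpha_{1}}(s_{1}) \otimes \mathscr{O}_{\alpha_{2}}(s_{2})$, Eq. (\ref{eqHYM}) shows that $\Lambda_{\omega_{0}}(\chi_{{\bf{L}}}) = 0$ if and only if $s_{1} + s_{2} = 0$, and by Eq. (\ref{degreeVB}) this is exactly the condition $\deg_{\omega_{0}}({\bf{L}}) = 0$; hence ${\rm{Pic}}_{\omega_{0}}^{0}({\mathbbm{P}}(T_{{\mathbbm{P}^{2}}})) = \langle \mathscr{O}_{\alpha_{1}}(1) \otimes \mathscr{O}_{\alpha_{2}}(-1) \rangle$. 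The key observation is that on this locus the Lagrangian phase vanishes as well: substituting $s_{2} = -s_{1}$ into Eq. (\ref{phaseGclass}) gives $\Theta_{\omega_{0}}(\chi_{{\bf{L}}}) = \arctan(s_{1}/2) + \arctan(-s_{1}/2) + \arctan(0) = 0$. Thus every degree-zero line bundle lies simultaneously in $\mathcal{D}_{0}(\omega_{0})$ and $\mathcal{L}_{0}(\omega_{0})$.

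Next I would fix an integer $r > 1$, choose pairwise distinct ${\bf{L}}_{1}, \ldots, {\bf{L}}_{r} \in {\rm{Pic}}_{\omega_{0}}^{0}({\mathbbm{P}}(T_{{\mathbbm{P}^{2}}}))$, and set ${\bf{E}} = {\bf{L}}_{1} \oplus \cdots \oplus {\bf{L}}_{r}$. Endowing each summand with its ${\rm{SU}}(3)$-invariant Hermitian structure from Remark \ref{dHYMisHYM} yields a Hermitian metric ${\bf{h}}$ on ${\bf{E}}$ whose Chern connection $\nabla$ has block-diagonal curvature $\frac{\sqrt{-1}}{2\pi}F_{\nabla} = {\rm{diag}}(\chi_{1}, \ldots, \chi_{r})$, where $\chi_{j}$ is the invariant representative of $c_{1}({\bf{L}}_{j})$. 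Since each $\Lambda_{\omega_{0}}(\chi_{j}) = 0$ by the previous paragraph, Remark \ref{primitivecalc} gives $\sqrt{-1}\Lambda_{\omega_{0}}(F_{\nabla}) = 0$, and as $\deg_{\omega_{0}}({\bf{E}}) = \sum_{j} \deg_{\omega_{0}}({\bf{L}}_{j}) = 0$ we obtain $\mu_{[\omega_{0}]}({\bf{E}}) = 0$, so the required identity $c = \frac{\pi}{8}\mu_{[\omega_{0}]}({\bf{E}})$ holds with $c = 0$. For the dHYM equation, the block-diagonal form gives
\[
\Big(\omega_{0} \otimes \mathbbm{1}_{{\bf{E}}} - \tfrac{1}{2\pi}F_{\nabla}\Big)^{3} = {\rm{diag}}\big((\omega_{0} + \sqrt{-1}\chi_{1})^{3}, \ldots, (\omega_{0} + \sqrt{-1}\chi_{r})^{3}\big),
\]
so that taking imaginary parts commutes with the block decomposition; since $\Theta_{\omega_{0}}(\chi_{j}) = 0$ for all $j$, Theorem \ref{ProofTheo1} (equivalently, the dHYM assertion of Remark \ref{dHYMisHYM}) yields ${\rm{Im}}(e^{-\sqrt{-1}\hat{\Theta}({\bf{E}})}(\omega_{0} + \sqrt{-1}\chi_{j})^{3}) = 0$ on each block with $\hat{\Theta}({\bf{E}}) = 0$. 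Finally, ${\bf{E}}$ is polystable: each line bundle is trivially slope-stable, having no proper nonzero subbundle, and all the ${\bf{L}}_{j}$ share the common slope $0$, so their direct sum is slope-polystable by definition.

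Because all the analytic content is imported from the line-bundle results, I expect no serious analytic obstacle here; the work is essentially bookkeeping. The one point demanding care is the simultaneous vanishing $\mathcal{D}_{0}(\omega_{0}) = \mathcal{L}_{0}(\omega_{0})$, that is, that the algebraic degree-zero condition $s_{1} + s_{2} = 0$ forces the transcendental phase sum of Eq. (\ref{phaseGclass}) to vanish, for it is precisely this coincidence that aligns the HYM constant $c$ and the dHYM phase $\hat{\Theta}({\bf{E}})$ at $0$ and lets the single matrix equation split cleanly across the summands.
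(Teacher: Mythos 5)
Your proposal is correct and follows essentially the same route as the paper: both take ${\bf{E}}$ to be a Whitney sum of distinct elements of ${\rm{Pic}}_{\omega_{0}}^{0}({\mathbbm{P}}(T_{{\mathbbm{P}^{2}}})) = \langle \mathscr{O}_{\alpha_{1}}(1)\otimes\mathscr{O}_{\alpha_{2}}(-1)\rangle$, use the coincidence $\mathcal{D}_{0}(\omega_{0}) = \mathcal{L}_{0}(\omega_{0})$ to get both $c=0$ and $\hat{\Theta}({\bf{E}})=0$, and reduce the block-diagonal matrix equations to the line-bundle case via Theorem \ref{ProofTheo1} and Remark \ref{dHYMisHYM}. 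The one point you flag as needing care, the vanishing of the phase sum when $s_{1}+s_{2}=0$, is verified exactly as in the paper.
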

\begin{remark}
Proceeding as in Remark \ref{HYMinstantonlinebundle}, we can describe $\nabla$ obtained above in an explicit way. In fact, considering the opposite big cell $U^{-}(B) \subset {\mathbbm{P}}(T_{{\mathbbm{P}^{2}}})$, and denoting 
\begin{equation}
{\bf{L}}_{k} = \mathscr{O}_{\alpha_{1}}(\ell_{k}) \otimes  \mathscr{O}_{\alpha_{2}}(-\ell_{k}), 
\end{equation}
such that $\ell_{k} \in \mathbbm{Z}$, $k = 1,\ldots,r$, we have the dHYM connection given (locally) by 
\begin{equation}
\nabla|_{U^{-}(B)} = {\rm{d}} + \begin{pmatrix} A_{U^{-}(B)}^{(1)} & \cdots & 0 \\
 \vdots & \ddots & \vdots \\
 0 & \cdots & A_{U^{-}(B)}^{(r)}\end{pmatrix},
\end{equation}
where 
\begin{equation}
A_{U^{-}(B)}^{(k)} = - \partial \log \Bigg [ \frac{\bigg ( 1 + \displaystyle{\sum_{i = 1}^{2}|z_{i}|^{2}} \bigg )^{\ell_{k}}}{ \bigg (1 + |z_{3}|^{2} + \bigg | \det \begin{pmatrix}
z_{1} & 1  \\                  
z_{2}  & z_{3} 
 \end{pmatrix} \bigg |^{2} \bigg )^{\ell_{k}}} \Bigg ],
\end{equation}
for all $k = 1,\ldots,r$.
\end{remark}

Let us now construct examples of Hermitian connections of Type II on holomorphic vector bundles of rank 2. Let $m = \frac{3}{4}$, and consider $\mathcal{D}_{\frac{3}{4}}(\omega_{0})$. By construction, we can take ${\bf{F}}, {\bf{G}} \in \mathcal{D}_{\frac{3}{4}}(\omega_{0})$, such that 
\begin{equation}
\label{exampleHYMnotdHYM}
{\bf{F}} = \mathscr{O}_{\alpha_{1}}(2) \otimes  \mathscr{O}_{\alpha_{2}}(-1) \ \ \ {\text{and}} \ \ \ {\bf{G}} = \mathscr{O}_{\alpha_{1}}(3) \otimes  \mathscr{O}_{\alpha_{2}}(-2).
\end{equation}
From above, it follows that 
\begin{equation}
\Lambda_{\omega_{0}}(\chi_{{\bf{F}}}) = \Lambda_{\omega_{0}}(\chi_{{\bf{G}}}) = \frac{3}{4} \ \ \ \ {\text{and}} \ \ \ \ \frac{\Theta_{\omega_{0}}(\chi_{\bf{F}}) - \Theta_{\omega_{0}}(\chi_{\bf{G}})}{2\pi} \notin \mathbbm{Z}. 
\end{equation}
In particular, we have 
\begin{equation}
\mu_{[\omega_{0}]}({\bf{F}}) = \mu_{[\omega_{0}]}({\bf{G}}) = 12.
\end{equation}
If we define ${\bf{E}} = {\bf{F}} \oplus {\bf{G}}$, it follows that there exits a Hermitian structure ${\bf{h}}$ on ${\bf{E}}$, such that the curvature of the associated Chern connection $\nabla$ is given by
\begin{equation}
\frac{\sqrt{-1}}{2\pi}F_{\nabla} = \begin{pmatrix} \chi_{{\bf{F}}} & 0 \\
 0 & \chi_{{\bf{G}}} \end{pmatrix}.
\end{equation}
From the ideas introduced in Remark \ref{HYMinstantonlinebundle}, the Hermitian connection $\nabla$ on ${\bf{E}} = {\bf{F}} \oplus {\bf{G}}$ mentioned above can described (locally) by  
\begin{equation}
\nabla|_{U^{-}(B)} = {\rm{d}} + \begin{pmatrix} A_{{\bf{F}}} & 0 \\
 0 & A_{{\bf{G}}} \end{pmatrix},
\end{equation}
such that 
\begin{itemize}
\item $\displaystyle{A_{{\bf{F}}} = - \partial \log \Bigg [ \frac{\bigg ( 1 + \displaystyle{\sum_{i = 1}^{2}|z_{i}|^{2}} \bigg )^{2}}{ \bigg (1 + |z_{3}|^{2} + \bigg | \det \begin{pmatrix}
z_{1} & 1  \\                  
z_{2}  & z_{3} 
 \end{pmatrix} \bigg |^{2} \bigg )} \Bigg ]},$
 \item $\displaystyle{A_{{\bf{G}}}=  - \partial \log \Bigg [ \frac{\bigg ( 1 + \displaystyle{\sum_{i = 1}^{2}|z_{i}|^{2}} \bigg )^{3}}{ \bigg (1 + |z_{3}|^{2} + \bigg | \det \begin{pmatrix}
z_{1} & 1  \\                  
z_{2}  & z_{3} 
 \end{pmatrix} \bigg |^{2} \bigg )^{2}} \Bigg ]}.$
\end{itemize}
By construction, from Theorem \ref{ProofTheo1} (see Remark \ref{dHYMisHYM}), we have 
\begin{enumerate}
\item $\sqrt{-1}\Lambda_{\omega_{0}}(F_{\nabla}) = \frac{3\pi}{2} \mathbbm{1}_{{\bf{E}}} = \mu_{[\omega_{0}]}({\bf{E}})\frac{\pi}{8}\mathbbm{1}_{{\bf{E}}}$, notice that $\mu_{[\omega_{0}]}({\bf{E}}) = 12$;
\item ${\rm{Im}}\big ({\rm{e}}^{-\sqrt{-1}\hat{\Theta}({\bf{E}})}(\omega_{0} \otimes \mathbbm{1}_{{\bf{E}}} - \frac{1}{2\pi}F_{\nabla})^{3}\big ) \neq 0$, $\forall \hat{\Theta}({\bf{E}}) \in \mathbbm{R}$.
\end{enumerate}
Therefore, $\nabla$ is an example of HYM connection which is not a dHYM instanton, i.e., an example of Hermitian connection of Type II. In summary, we have the following.

\begin{lemma}
\label{L2}
There exists a Hermitian homlomorphic vector bundle $({\bf{E}},{\bf{h}}) \to ({\mathbbm{P}}(T_{{\mathbbm{P}^{2}}}),\omega_{0})$, with $\rank({\bf{E}}) > 1$, such that the associated Chern connection $\nabla$ satisfies 
\begin{equation}
\begin{cases}\sqrt{-1}\Lambda_{\omega_{0}}(F_{\nabla}) = c\mathbbm{1}_{{\bf{E}}}, \\ {\rm{Im}}\Big ({\rm{e}}^{-\sqrt{-1}\hat{\Theta}({\bf{E}})}\Big (\omega_{0} \otimes \mathbbm{1}_{{\bf{E}}} - \frac{1}{2\pi}F_{\nabla}\Big )^{3}\Big ) \neq  0,\end{cases}
\end{equation}
for $c = \frac{\pi}{8} \mu_{[\omega_{0}]}({\bf{E}})$ and for all $\hat{\Theta}({\bf{E}}) \in \mathbbm{R}$. In particular, we have that ${\bf{E}}$ is slope-polystable.
\end{lemma}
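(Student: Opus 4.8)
The plan is to realize ${\bf{E}}$ as a direct sum of two line bundles chosen to have equal slope but distinct Lagrangian phases, so that the induced connection is automatically a HYM instanton while the single phase condition demanded by the dHYM equation cannot be met. First I would set ${\bf{F}} = \mathscr{O}_{\alpha_{1}}(2) \otimes \mathscr{O}_{\alpha_{2}}(-1)$ and ${\bf{G}} = \mathscr{O}_{\alpha_{1}}(3) \otimes \mathscr{O}_{\alpha_{2}}(-2)$ as in Eq.~(\ref{exampleHYMnotdHYM}), and define ${\bf{E}} := {\bf{F}} \oplus {\bf{G}}$, so that $\rank({\bf{E}}) = 2 > 1$. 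From Eq.~(\ref{eqHYM}) both summands lie in $\mathcal{D}_{3/4}(\omega_{0})$, hence $\Lambda_{\omega_{0}}(\chi_{{\bf{F}}}) = \Lambda_{\omega_{0}}(\chi_{{\bf{G}}}) = \tfrac{3}{4}$, and the degree formula (\ref{degreeVB}) gives $\mu_{[\omega_{0}]}({\bf{F}}) = \mu_{[\omega_{0}]}({\bf{G}}) = 12$.

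Next I would equip each summand with the ${\rm{SU}}(3)$-invariant Hermitian metric of Remark~\ref{HYMinstantonlinebundle}; the resulting Chern connection $\nabla$ on ${\bf{E}}$ then has block-diagonal curvature with diagonal blocks $\chi_{{\bf{F}}}$ and $\chi_{{\bf{G}}}$. Contracting with $\omega_{0}$ and invoking Remark~\ref{primitivecalc}, one obtains $\sqrt{-1}\Lambda_{\omega_{0}}(F_{\nabla}) = \tfrac{3\pi}{2}\mathbbm{1}_{{\bf{E}}}$, which equals $c\,\mathbbm{1}_{{\bf{E}}}$ for $c = \tfrac{\pi}{8}\mu_{[\omega_{0}]}({\bf{E}}) = \tfrac{3\pi}{2}$; this settles the first (HYM) equation.

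The heart of the matter is the failure of the dHYM equation for every $\hat{\Theta}({\bf{E}}) \in \mathbbm{R}$. Since $F_{\nabla}$ is block-diagonal, so is the top power, whose diagonal blocks are $(\omega_{0} + \sqrt{-1}\chi_{{\bf{F}}})^{3}$ and $(\omega_{0} + \sqrt{-1}\chi_{{\bf{G}}})^{3}$. By Theorem~\ref{ProofTheo1} each such block is a strictly positive volume form multiplied by ${\rm{e}}^{\sqrt{-1}\Theta_{\omega_{0}}(\chi)}$, so the imaginary part of ${\rm{e}}^{-\sqrt{-1}\hat{\Theta}}(\omega_{0} + \sqrt{-1}\chi)^{3}$ vanishes precisely when $\Theta_{\omega_{0}}(\chi) \equiv \hat{\Theta} \ ({\rm{mod}}\ \pi)$. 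The decisive computation, via Eq.~(\ref{phaseGclass}), is that $\Theta_{\omega_{0}}(\chi_{{\bf{F}}}) = \arctan(1) + \arctan(-\tfrac{1}{2}) + \arctan(\tfrac{1}{4})$ and $\Theta_{\omega_{0}}(\chi_{{\bf{G}}}) = \arctan(\tfrac{3}{2}) + \arctan(-1) + \arctan(\tfrac{1}{4})$ differ by a small nonzero quantity that is not an integer multiple of $\pi$. Therefore no single $\hat{\Theta}$ can annihilate the imaginary part of both diagonal blocks at once, whence ${\rm{Im}}\big({\rm{e}}^{-\sqrt{-1}\hat{\Theta}({\bf{E}})}(\omega_{0}\otimes\mathbbm{1}_{{\bf{E}}} - \tfrac{1}{2\pi}F_{\nabla})^{3}\big) \neq 0$ for all $\hat{\Theta}({\bf{E}})$. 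I expect this mod-$\pi$ separation of the two invariant phases to be the main obstacle, since it is the only step requiring an actual estimate of the arctangent sums rather than a formal manipulation.

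Finally, slope-polystability is immediate: ${\bf{F}}$ and ${\bf{G}}$ are line bundles, hence stable, and they share the common slope $12$, so ${\bf{E}} = {\bf{F}} \oplus {\bf{G}}$ is a direct sum of stable bundles of the same slope.
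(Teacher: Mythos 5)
Your proposal is correct and follows essentially the same route as the paper: it uses the identical pair ${\bf{F}} = \mathscr{O}_{\alpha_{1}}(2) \otimes \mathscr{O}_{\alpha_{2}}(-1)$, ${\bf{G}} = \mathscr{O}_{\alpha_{1}}(3) \otimes \mathscr{O}_{\alpha_{2}}(-2)$ from $\mathcal{D}_{3/4}(\omega_{0})$, the same block-diagonal invariant Chern connection, and the same phase-mismatch argument. Your observation that the obstruction is really a mod-$\pi$ (rather than mod-$2\pi$) separation of the invariant Lagrangian phases is a slightly sharper formulation of the paper's condition, and the difference $\Theta_{\omega_{0}}(\chi_{{\bf{F}}}) - \Theta_{\omega_{0}}(\chi_{{\bf{G}}}) = \arctan(1/8)$ indeed lies in $(0,\pi)$, so the argument closes.
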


\begin{remark}
Notice that $\mathcal{D}_{\frac{3}{4}}(\omega_{0})$ is the set of line bundles (up to isomorphism) of the form
\begin{equation}
{\bf{L}} = \mathscr{O}_{\alpha_{1}}(s) \otimes  \mathscr{O}_{\alpha_{2}}(1-s), \ \ s \in \mathbbm{Z}.
\end{equation}
Therefore, given some integer $r > 0$, if we define
\begin{equation}
{\bf{E}}:= \bigoplus_{s = 2}^{r+1} \Big (\mathscr{O}_{\alpha_{1}}(s) \otimes  \mathscr{O}_{\alpha_{2}}(1-s) \Big ),
\end{equation}
by following a similar argument as in the case described in Eq. (\ref{exampleHYMnotdHYM}), one can construct examples of higher rank HYM connections of Type II.
\end{remark}

\subsection{dHYM connections on unstable holomorphic vector bundles} 
\label{Unstableconstruction}
In this subsection, we will construct some examples of dHYM connections which do not satisfy the HYM equation. In order to do so, we proceed in the following way. Let $m_{1},m_{2},m_{3} \in \mathbbm{R}$, such that: 
\begin{enumerate}
\item $m_{2} \neq m_{3}$;
\item $\mathcal{L}_{m_{1}}(\omega_{0}) \neq \emptyset $ and  $\mathcal{D}_{m_{i}}(\omega_{0}) \neq \emptyset $, $i = 2,3$;
\item $\mathcal{L}_{m_{1}}(\omega_{0}) \cap \mathcal{D}_{m_{i}}(\omega_{0}) \neq \emptyset$, $i =2,3$.
\end{enumerate}
Given $m_{1},m_{2},m_{3} \in \mathbbm{R}$ satisfying the above conditions, let ${\bf{F}},{\bf{G}} \in \mathcal{L}_{m_{1}}(\omega_{0})$, such that 
\begin{equation}
{\bf{F}} \in \mathcal{D}_{m_{2}}(\omega_{0}) \ \ {\text{and}} \ \ {\bf{G}} \in \mathcal{D}_{m_{3}}(\omega_{0}). 
\end{equation}
From above, we define 
\begin{equation}
{\bf{E}} = {\bf{F}} \oplus {\bf{G}}.
\end{equation}
By construction, we have that 
\begin{equation}
\deg_{\omega_{0}}({\bf{F}}) = \frac{3!}{3}m_{2} {\rm{Vol}}({\mathbbm{P}}(T_{{\mathbbm{P}^{2}}}),\omega_{0}) \ \ \ \text{and} \ \ \ \deg_{\omega_{0}}({\bf{G}}) = \frac{3!}{3}m_{3} {\rm{Vol}}({\mathbbm{P}}(T_{{\mathbbm{P}^{2}}}),\omega_{0}),
\end{equation}
Thus, it follows that 
\begin{equation}
\mu_{[\omega_{0}]}({\bf{F}}) = \deg_{\omega_{0}}({\bf{F}}) \neq \deg_{\omega_{0}}({\bf{G}}) = \mu_{[\omega_{0}]}({\bf{G}}),
\end{equation}
so ${\bf{E}}$ is not slope-semistable (e.g. \cite{Kobayashi+1987}). Since a slope-polystable holomorphic vector bundle is in particular slope-semistable, from Kobayashi-Hitchin correspondence \cite{donaldson1985anti,donaldson1987infinite},\cite{uhlenbeck1986existence}, it follows that ${\bf{E}}$ does not admit a HYM connection. On the other hand, since ${\bf{F}}, {\bf{G}} \in \mathcal{L}_{m_{1}}(\omega_{0})$, it follows that 
\begin{equation}
\Theta_{\omega_{0}}(\chi_{{\bf{F}}}) = \Theta_{\omega_{0}}(\chi_{{\bf{G}}}) = m_{1}. 
\end{equation}
where $\chi_{{\bf{F}}} \in c_{1}({\bf{F}})$ and $\chi_{{\bf{G}}} \in c_{1}({\bf{G}})$ are the ${\rm{SU}}(3)$-invariant representatives. Hence, it follows that there exists a Hermitian structure ${\bf{h}}$ on ${\bf{E}}$, such that the curvature of the associated Chern connection $\nabla$ is given by
\begin{equation}
\frac{\sqrt{-1}}{2\pi}F_{\nabla} = \begin{pmatrix} \chi_{{\bf{F}}} & 0 \\
 0 & \chi_{{\bf{G}}} \end{pmatrix}.
\end{equation}
Since
\begin{equation}
{\rm{tr}}\Big (\omega_{0} \otimes \mathbbm{1}_{{\bf{E}}} - \frac{1}{2\pi}F_{\nabla}\Big)^{3} = \big (\omega_{0} + \sqrt{-1}\chi_{{\bf{F}}} \big )^{3} + \big (\omega_{0} + \sqrt{-1}\chi_{{\bf{G}}} \big )^{3},
\end{equation}
it follows that 
\begin{equation}
\hat{\Theta}({\bf{E}}) = {\rm{Arg}}\int_{{\mathbbm{P}}(T_{{\mathbbm{P}^{2}}})}{\rm{tr}}\Big (\omega_{0} \otimes \mathbbm{1}_{{\bf{E}}} - \frac{1}{2\pi}F_{\nabla}\Big)^{3} = m_{1} \ (\text{mod} \ 2\pi).
\end{equation}
Hence, from Theorem \ref{ProofTheo1} (see also Remark \ref{dHYMisHYM}), we obtain
\begin{equation}
{\rm{Im}}\Big ( {\rm{e}}^{-\sqrt{-1} \hat{\Theta}({\bf{E}})}\big (\omega_{0} \otimes \mathbbm{1}_{{\bf{E}}} - \frac{1}{2\pi}F_{\nabla} \big )^{3} \Big ) = 0.
\end{equation}
In order to construct an explicit example which illustrates the above construction, consider $m_{1} = \pi$. From this, we take two different integer solutions of the equation
\begin{equation}
\label{eqlagrangianpi}
\arctan \bigg ( \frac{s_{1}}{2}\bigg) + \arctan \bigg ( \frac{s_{2}}{2}\bigg) + \arctan \bigg(\frac{s_{1} + s_{2}}{4} \bigg) = \pi,
\end{equation}
and consider the associated line bundles ${\bf{F}},{\bf{G}} \in \mathcal{L}_{\pi}(\omega_{0})$. It is worth pointing out that the solutions of Eq. (\ref{eqlagrangianpi}) satisfy
\begin{equation}
s_{1}s_{2}=12, \ \ s_{1} > 0.
\end{equation}
Thus, we can take, for instance,
\begin{equation}
{\bf{F}} = \mathscr{O}_{\alpha_{1}}(2) \otimes  \mathscr{O}_{\alpha_{2}}(6) \ \ \ {\text{and}} \ \ \ {\bf{G}} = \mathscr{O}_{\alpha_{1}}(3) \otimes  \mathscr{O}_{\alpha_{2}}(4).
\end{equation}
From above, we have 
\begin{equation}
\Lambda_{\omega_{0}}(\chi_{{\bf{F}}}) = 6 \ \ {\text{and}} \ \ \Lambda_{\omega_{0}}(\chi_{{\bf{G}}}) = \frac{21}{4}.
\end{equation}
Therefore, if we consider
\begin{equation}
m_{2} = 6 \ \ \ {\text{and}} \ \ \ m_{3} =  \frac{21}{4},
\end{equation}
it follows that $m_{1} = \pi$, $m_{2} = 6$, and  $m_{3} =  \frac{21}{4}$, satisfy the desired properties. In this case, we can define
\begin{equation}
{\bf{E}}:= \underbrace{\Big ( \mathscr{O}_{\alpha_{1}}(2) \otimes  \mathscr{O}_{\alpha_{2}}(6)\Big)}_{{\bf{F}}} \oplus \underbrace{\Big ( \mathscr{O}_{\alpha_{1}}(3) \otimes  \mathscr{O}_{\alpha_{2}}(4)\Big)}_{{\bf{G}}}.
\end{equation}
By following Remark \ref{HYMinstantonlinebundle} and the previous ideas, we have that there exists a Hermitian structure ${\bf{h}}$ on ${\bf{E}}$, such that the associated Chern connection $\nabla$ is given by
\begin{equation}
\nabla|_{U^{-}(B)} = {\rm{d}} + \begin{pmatrix} A_{{\bf{F}}} & 0 \\
 0 & A_{{\bf{G}}} \end{pmatrix},
\end{equation}
such that 
\begin{itemize}
\item $A_{{\bf{F}}}=  - \partial \log \Bigg [ \bigg ( 1 + \displaystyle{\sum_{i = 1}^{2}|z_{i}|^{2}} \bigg )^{2} \bigg (1 + |z_{3}|^{2} + \bigg | \det \begin{pmatrix}
z_{1} & 1  \\                  
z_{2}  & z_{3} 
 \end{pmatrix} \bigg |^{2} \bigg )^{6} \Bigg ],$
 \item $A_{{\bf{G}}}=  - \partial \log \Bigg [ \bigg ( 1 + \displaystyle{\sum_{i = 1}^{2}|z_{i}|^{2}} \bigg )^{3} \bigg (1 + |z_{3}|^{2} + \bigg | \det \begin{pmatrix}
z_{1} & 1  \\                  
z_{2}  & z_{3} 
 \end{pmatrix} \bigg |^{2} \bigg )^{4} \Bigg ].$
\end{itemize}
By construction, we have $\Theta_{\omega_{0}}(\chi_{{\bf{F}}}) = \Theta_{\omega_{0}}(\chi_{{\bf{G}}}) =\pi$, thus
\begin{equation}
{\rm{Im}}\Big ( {\rm{e}}^{-\sqrt{-1} \hat{\Theta}({\bf{E}})}\big (\omega_{0} \otimes \mathbbm{1}_{{\bf{E}}} - \frac{1}{2\pi}F_{\nabla} \big )^{3} \Big ) = 0,
\end{equation}
such that 
\begin{equation}
\hat{\Theta}({\bf{E}}) = {\rm{Arg}}\int_{{\mathbbm{P}}(T_{{\mathbbm{P}^{2}}})}{\rm{tr}}\Big (\omega_{0} \otimes \mathbbm{1}_{{\bf{E}}} - \frac{1}{2\pi}F_{\nabla}\Big)^{3} = \pi \ (\text{mod} \ 2\pi).
\end{equation}
Therefore, $\nabla$ defines a dHYM connection on ${\bf{E}}$ which is not an HYM instanton. In conclusion, we have the following result.

\begin{lemma}
\label{L3}
There exists a Hermitian homlomorphic vector bundle $({\bf{E}},{\bf{h}}) \to ({\mathbbm{P}}(T_{{\mathbbm{P}^{2}}}),\omega_{0})$, with $\rank({\bf{E}}) > 1$, such that the associated Chern connection $\nabla$ satisfies 
\begin{equation}
\begin{cases}\sqrt{-1}\Lambda_{\omega_{0}}(F_{\nabla}) \neq c\mathbbm{1}_{{\bf{E}}}, \\ {\rm{Im}}\Big ({\rm{e}}^{-\sqrt{-1}\hat{\Theta}({\bf{E}})}\Big (\omega_{0} \otimes \mathbbm{1}_{{\bf{E}}} - \frac{1}{2\pi}F_{\nabla}\Big )^{3}\Big ) =  0,\end{cases}
\end{equation}
for all $c \in \mathbbm{R}$, where $\hat{\Theta}({\bf{E}}) = {\rm{Arg}}\int_{{\mathbbm{P}}(T_{{\mathbbm{P}^{2}}})}{\rm{tr}}\Big (\omega_{0} \otimes \mathbbm{1}_{{\bf{E}}} - \frac{1}{2\pi}F_{\nabla}\Big)^{3} \ (\text{mod} \ 2\pi)$. In particular, we have that ${\bf{E}}$ is slope-unstable.
\end{lemma}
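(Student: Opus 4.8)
The plan is to realize ${\bf{E}}$ as a Whitney sum of two line bundles on ${\mathbbm{P}}(T_{{\mathbbm{P}^{2}}})$ chosen so that their ${\rm{SU}}(3)$-invariant curvatures share a common Lagrangian phase but have distinct contractions with $\omega_{0}$. The guiding principle is the dichotomy established in Remark \ref{dHYMisHYM}: for an invariant representative $\chi$, the dHYM condition is governed by the Lagrangian phase $\Theta_{\omega_{0}}(\chi)$ of Eq. (\ref{phaseGclass}), whereas the HYM condition and the slope are governed by the trace $\Lambda_{\omega_{0}}(\chi)$ of Eq. (\ref{eqHYM}). Decoupling these two quantities on a reducible bundle should produce a connection that solves dHYM yet fails HYM.

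First I would select ${\bf{F}}, {\bf{G}} \in {\rm{Pic}}({\mathbbm{P}}(T_{{\mathbbm{P}^{2}}}))$ with $\Theta_{\omega_{0}}(\chi_{{\bf{F}}}) = \Theta_{\omega_{0}}(\chi_{{\bf{G}}}) = m_{1}$ but $\Lambda_{\omega_{0}}(\chi_{{\bf{F}}}) \neq \Lambda_{\omega_{0}}(\chi_{{\bf{G}}})$, that is, elements of a common level set $\mathcal{L}_{m_{1}}(\omega_{0})$ lying in distinct level sets $\mathcal{D}_{m_{2}}(\omega_{0})$ and $\mathcal{D}_{m_{3}}(\omega_{0})$ with $m_{2} \neq m_{3}$. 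Writing ${\bf{L}} = \mathscr{O}_{\alpha_{1}}(s_{1}) \otimes \mathscr{O}_{\alpha_{2}}(s_{2})$, by Eq. (\ref{phaseGclass}) the phase equals $\arctan(s_{1}/2) + \arctan(s_{2}/2) + \arctan((s_{1}+s_{2})/4)$ while the trace equals $\tfrac{3}{4}(s_{1}+s_{2})$, so I need two integer points on a fixed arctangent level curve having different coordinate sums. Setting $m_{1} = \pi$ and applying the tangent addition formula reduces the phase equation to the Diophantine condition $s_{1}s_{2} = 12$ with $s_{1} > 0$, which admits several solutions; the choices $(2,6)$ and $(3,4)$ give $\Lambda_{\omega_{0}}$-values $6$ and $21/4$, settling existence of the pair.

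Next I would set ${\bf{E}} = {\bf{F}} \oplus {\bf{G}}$ and equip it with the direct sum of the Hermitian structures supplied by Remark \ref{HYMinstantonlinebundle}, so that $\tfrac{\sqrt{-1}}{2\pi}F_{\nabla} = {\rm{diag}}(\chi_{{\bf{F}}}, \chi_{{\bf{G}}})$. For the dHYM claim, the block-diagonal form gives $(\omega_{0} \otimes \mathbbm{1}_{{\bf{E}}} - \tfrac{1}{2\pi}F_{\nabla})^{3} = {\rm{diag}}((\omega_{0} + \sqrt{-1}\chi_{{\bf{F}}})^{3}, (\omega_{0} + \sqrt{-1}\chi_{{\bf{G}}})^{3})$, whence $\hat{\Theta}({\bf{E}}) = m_{1}$ by Theorem \ref{ProofTheo1}; since each summand solves dHYM with this common phase, the imaginary part of ${\rm{e}}^{-\sqrt{-1}\hat{\Theta}({\bf{E}})}(\omega_{0} \otimes \mathbbm{1}_{{\bf{E}}} - \tfrac{1}{2\pi}F_{\nabla})^{3}$ vanishes. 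For the failure of HYM, the mean curvature $\tfrac{\sqrt{-1}}{2\pi}\Lambda_{\omega_{0}}(F_{\nabla}) = {\rm{diag}}(\Lambda_{\omega_{0}}(\chi_{{\bf{F}}}), \Lambda_{\omega_{0}}(\chi_{{\bf{G}}}))$ has distinct diagonal entries and so is never a scalar multiple of $\mathbbm{1}_{{\bf{E}}}$; equivalently, by Eq. (\ref{degreeVB}) the summands have different degrees, giving $\mu_{[\omega_{0}]}({\bf{F}}) \neq \mu_{[\omega_{0}]}({\bf{G}})$ and hence slope-instability of ${\bf{E}}$.

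I expect the main obstacle to be the existence step rather than the verification: one must guarantee that the arctangent level set $\mathcal{L}_{m_{1}}(\omega_{0})$ contains at least two integral classes with genuinely different traces, which is not automatic and forces the specific choice $m_{1} = \pi$, where the addition formula collapses the transcendental constraint into the tractable integer equation $s_{1}s_{2} = 12$. Once a suitable pair is in hand, the remaining arguments are the formal block-diagonal computation and the appeal to the explicit phase formula of Theorem \ref{ProofTheo1}, both of which are routine.
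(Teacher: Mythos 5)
Your proposal is correct and follows essentially the same route as the paper: the paper also takes ${\bf{E}} = {\bf{F}} \oplus {\bf{G}}$ with ${\bf{F}}, {\bf{G}}$ in a common Lagrangian-phase level set $\mathcal{L}_{\pi}(\omega_{0})$ but distinct trace level sets, reduces the phase equation at $m_{1}=\pi$ to $s_{1}s_{2}=12$ with $s_{1}>0$, and picks exactly the pair $(2,6)$ and $(3,4)$ with $\Lambda_{\omega_{0}}$-values $6$ and $21/4$ before running the same block-diagonal verification. The only cosmetic difference is that the paper additionally invokes the Kobayashi--Hitchin correspondence to conclude that \emph{no} HYM connection exists on ${\bf{E}}$, whereas your direct observation that the diagonal mean curvature has distinct entries already suffices for the statement as written.
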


\begin{remark}
It is worth mentioning that the subset $\mathcal{D}_{\pi}(\omega_{0})$ used in the previous construction is the finite subset of ${\rm{Pic}}({\mathbbm{P}}(T_{{\mathbbm{P}^{2}}}))$ of line bundles (up to isomorphism) of the form
\begin{equation}
{\bf{L}} = \mathscr{O}_{\alpha_{1}}(s) \otimes  \mathscr{O}_{\alpha_{2}}(\textstyle{\frac{12}{s}}), \ \ s \in \mathbbm{N}, \ s | 12.
\end{equation}
If we define
\begin{equation}
{\bf{E}}:= \bigoplus_{s \in \mathbbm{N},  s | 12} \Big (\mathscr{O}_{\alpha_{1}}(s) \otimes  \mathscr{O}_{\alpha_{2}}(\textstyle{\frac{12}{s}}) \Big ) \ \ {\text{or}} \ \ \displaystyle{\bigoplus_{s \in \mathbbm{N},  s | 12} \Big (\mathscr{O}_{\alpha_{1}}({\textstyle{\frac{12}{s}}}) \otimes  \mathscr{O}_{\alpha_{2}}(s) \Big )},
\end{equation}
one can check that ${\bf{E}} \to {\mathbbm{P}}(T_{{\mathbbm{P}^{2}}})$ is a rank 6 (slope unstable) holomorphic vector bundle. Proceeding similarly as in the last example, one can construct a dHYM connection on ${\bf{E}}$ which is not dHYM instanton.
\end{remark}

\begin{proof}(Theorem \ref{thmArestate}) The proof follows from Lemma \ref{L1}, Lemma \ref{L2}, and Lemma \ref{L3}.

\end{proof}

\section{Proofs of Theorem B and Theorem C}
\label{Proof_theo_B_C}
Let $X_{P}$ be a rational homogeneous variety and let $\omega_{0} \in \Omega^{2}(X_{P})$ be a $G$-invariant K\"{a}hler form. In this setting, we have the following.
\begin{theorem}[Theorem \ref{TheoB}]
Given ${\bf{E}}, {\bf{F}} \in {\rm{Pic}}(X_{P})$, if 
\begin{equation}
{\rm{Im}}\bigg ( \frac{Z_{[\omega_{0}]}({\bf{E}})}{Z_{[\omega_{0}]}({\bf{F}})} \bigg ) = 0,
\end{equation}
then ${\bf{E}} \oplus {\bf{F}}$ admits a dHYM instanton.
\end{theorem}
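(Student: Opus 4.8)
The plan is to reduce the statement to the explicit block-diagonal construction used in the proof of Theorem~\ref{TheoA} and to control all phases through the central charge. First I would record the three facts that do the work. By Theorem~\ref{ProofTheo1} and Remark~\ref{dHYMisHYM}, every line bundle ${\bf{L}} \in {\rm{Pic}}(X_{P})$ carries a $G$-invariant Hermitian metric whose Chern connection has curvature $\frac{\sqrt{-1}}{2\pi}F = \chi_{{\bf{L}}}$, the invariant representative of $c_{1}({\bf{L}})$, with constant Lagrangian phase. Since the eigenvalues of $\omega_{0}^{-1}\circ\chi_{{\bf{L}}}$ are the constants of Proposition~\ref{eigenvalueatorigin}, one has the pointwise polar form
\[
(\omega_{0} + \sqrt{-1}\chi_{{\bf{L}}})^{n} = R_{{\bf{L}}}\,{\rm{e}}^{\sqrt{-1}\Theta_{\omega_{0}}(\chi_{{\bf{L}}})}\,\omega_{0}^{n}, \qquad R_{{\bf{L}}} > 0.
\]
Finally, from $Z_{[\omega_{0}]}({\bf{L}}) = -\frac{(-\sqrt{-1})^{n}}{n!}\int_{X_{P}}([\omega_{0}]+\sqrt{-1}c_{1}({\bf{L}}))^{n}$ it follows that $Z_{[\omega_{0}]}({\bf{L}})$ equals a fixed nonzero constant (depending only on $n$) times $\int_{X_{P}}(\omega_{0}+\sqrt{-1}\chi_{{\bf{L}}})^{n}$; in particular $Z_{[\omega_{0}]}({\bf{L}}) \neq 0$ and $\hat{\Theta}({\bf{L}}) \equiv {\rm{Arg}}(Z_{[\omega_{0}]}({\bf{L}})) - c_{0} \ (\text{mod}\ 2\pi)$ for the constant $c_{0} = {\rm{Arg}}(-(-\sqrt{-1})^{n}/n!)$.

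The key algebraic observation is that the central charge is additive, $Z_{[\omega_{0}]}({\bf{E}}\oplus{\bf{F}}) = Z_{[\omega_{0}]}({\bf{E}}) + Z_{[\omega_{0}]}({\bf{F}})$, because ${\rm{ch}}({\bf{E}}\oplus{\bf{F}}) = {\rm{ch}}({\bf{E}}) + {\rm{ch}}({\bf{F}})$. Since $Z_{[\omega_{0}]}({\bf{F}}) \neq 0$, the hypothesis ${\rm{Im}}(Z_{[\omega_{0}]}({\bf{E}})/Z_{[\omega_{0}]}({\bf{F}})) = 0$ is equivalent to $Z_{[\omega_{0}]}({\bf{E}}) = t\,Z_{[\omega_{0}]}({\bf{F}})$ for some $t \in \mathbbm{R}$, hence to $\hat{\Theta}({\bf{E}}) \equiv \hat{\Theta}({\bf{F}}) \ (\text{mod}\ \pi)$, as the arguments of real multiples agree modulo $\pi$. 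Assuming $t \neq -1$, which is precisely the condition $Z_{[\omega_{0}]}({\bf{E}}\oplus{\bf{F}}) \neq 0$ needed for the phase to exist, additivity gives $Z_{[\omega_{0}]}({\bf{E}}\oplus{\bf{F}}) = (1+t)Z_{[\omega_{0}]}({\bf{F}})$, so that the bundle phase satisfies $\hat{\Theta}({\bf{E}}\oplus{\bf{F}}) \equiv \hat{\Theta}({\bf{F}}) \equiv \hat{\Theta}({\bf{E}}) \ (\text{mod}\ \pi)$.

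To conclude I would run the block-diagonal construction. Equip ${\bf{E}}\oplus{\bf{F}}$ with the direct sum of the two invariant metrics above, so that $\frac{\sqrt{-1}}{2\pi}F_{\nabla} = {\rm{diag}}(\chi_{{\bf{E}}},\chi_{{\bf{F}}})$ and
\[
\Big(\omega_{0} \otimes \mathbbm{1} - \tfrac{1}{2\pi}F_{\nabla}\Big)^{n} = {\rm{diag}}\big((\omega_{0}+\sqrt{-1}\chi_{{\bf{E}}})^{n},\,(\omega_{0}+\sqrt{-1}\chi_{{\bf{F}}})^{n}\big).
\]
Taking the trace and integrating identifies $\hat{\Theta}({\bf{E}}\oplus{\bf{F}})$ with the argument of $\int_{X_{P}}(\omega_{0}+\sqrt{-1}\chi_{{\bf{E}}})^{n} + \int_{X_{P}}(\omega_{0}+\sqrt{-1}\chi_{{\bf{F}}})^{n}$, consistent with the previous paragraph. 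Substituting the polar form, each diagonal entry of ${\rm{Im}}\big({\rm{e}}^{-\sqrt{-1}\hat{\Theta}({\bf{E}}\oplus{\bf{F}})}(\,\cdot\,)^{n}\big)$ equals $R\sin\!\big(\Theta_{\omega_{0}}(\chi) - \hat{\Theta}({\bf{E}}\oplus{\bf{F}})\big)\,\omega_{0}^{n}$, which vanishes precisely because $\Theta_{\omega_{0}}(\chi_{{\bf{E}}}) \equiv \Theta_{\omega_{0}}(\chi_{{\bf{F}}}) \equiv \hat{\Theta}({\bf{E}}\oplus{\bf{F}}) \ (\text{mod}\ \pi)$, while the off-diagonal entries are zero. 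Hence $\nabla$ solves Eq.~(\ref{dHYMhigher}).

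The place the argument really turns is the bookkeeping modulo $\pi$ rather than $2\pi$: the reality of the central-charge ratio only controls the phases up to $\pi$, and the whole point is that the pointwise polar form makes the obstruction depend on the phase through $\sin$, which is $\pi$-antiperiodic, so matching modulo $\pi$ is exactly enough. The only genuinely excluded situation is the degenerate case $t = -1$, where $Z_{[\omega_{0}]}({\bf{E}}\oplus{\bf{F}}) = 0$ and no phase $\hat{\Theta}({\bf{E}}\oplus{\bf{F}})$ is defined; I would simply assume $Z_{[\omega_{0}]}({\bf{E}}\oplus{\bf{F}}) \neq 0$, or note that this case lies outside the scope of Eq.~(\ref{dHYMhigher}).
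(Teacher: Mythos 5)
Your argument is correct and follows the same route as the paper: put the $G$-invariant Hermitian metric on each summand so that $\frac{\sqrt{-1}}{2\pi}F_{\nabla}$ is block-diagonal with entries the invariant representatives $\chi_{{\bf{E}}},\chi_{{\bf{F}}}$, identify $Z_{[\omega_{0}]}$ of a line bundle with a universal nonzero constant times $\int_{X_{P}}(\omega_{0}+\sqrt{-1}\chi)^{n}$, and reduce Eq. (\ref{dHYMhigher}) for the direct sum to the line-bundle case of Theorem \ref{ProofTheo1}. The one place you genuinely diverge is the phase bookkeeping, and there your version is the more careful one. The paper's displayed equivalence asserts that ${\rm{Im}}\big(Z_{[\omega_{0}]}({\bf{E}})/Z_{[\omega_{0}]}({\bf{F}})\big)=0$ forces the two arguments to differ by an element of $2\pi\mathbbm{Z}$, and hence $\hat{\Theta}({\bf{E}})=\hat{\Theta}({\bf{F}})\ ({\rm{mod}}\ 2\pi)$ in Eq. (\ref{equivalentphase}); this is not true when the ratio is a negative real number, which the hypothesis allows (on ${\mathbbm{P}}(T_{{\mathbbm{P}^{2}}})$, take ${\bf{E}}=\mathscr{O}_{\alpha_{1}}(2)\otimes\mathscr{O}_{\alpha_{2}}(6)$ with Lagrangian phase $\pi$ and ${\bf{F}}$ trivial with phase $0$). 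Your observation that reality of the ratio only pins the phases down modulo $\pi$, combined with the pointwise polar form $(\omega_{0}+\sqrt{-1}\chi)^{n}=R\,{\rm{e}}^{\sqrt{-1}\Theta_{\omega_{0}}(\chi)}\omega_{0}^{n}$ which turns each diagonal entry of the obstruction into $R\sin\big(\Theta_{\omega_{0}}(\chi)-\hat{\Theta}({\bf{E}}\oplus{\bf{F}})\big)\omega_{0}^{n}$, shows that the conclusion still holds in that regime, and it isolates the single genuine degeneracy $Z_{[\omega_{0}]}({\bf{E}})+Z_{[\omega_{0}]}({\bf{F}})=0$, where $\hat{\Theta}({\bf{E}}\oplus{\bf{F}})$ is undefined and the caveat you add is genuinely needed. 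In short: same construction, but your modulo-$\pi$ analysis repairs a small gap in the paper's chain of equivalences.
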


\begin{proof}
Given ${\bf{E}} \in {\rm{Pic}}(X_{P})$, it follows that 
\begin{equation}
Z_{[\omega_{0}]}({\bf{E}}) = -\frac{(-\sqrt{-1})^{n}}{n!}\int_{X_{P}}\big([\omega_{0}] + \sqrt{-1}c_{1}({\bf{E}})\big)^{n}.
\end{equation}
Therefore, $\forall {\bf{E}}, {\bf{F}} \in {\rm{Pic}}(X_{P})$, we have 
\begin{equation}
{\rm{Im}}\bigg ( \frac{Z_{[\omega_{0}]}({\bf{E}})}{Z_{[\omega_{0}]}({\bf{F}})} \bigg ) = {\rm{Im}}\Bigg (\frac{\int_{X_{P}}\big([\omega_{0}] + \sqrt{-1}c_{1}({\bf{E}})\big)^{n}}{\int_{X_{P}}\big([\omega_{0}] + \sqrt{-1}c_{1}({\bf{F}})\big)^{n}} \Bigg ).
\end{equation}
Considering the $G$-invariant representatives $\chi_{{\bf{E}}} \in c_{1}({\bf{E}})$ and $\chi_{{\bf{F}}} \in c_{1}({\bf{F}})$, it follows that 
$$\textstyle{{\rm{Im}}\Big ( \frac{Z_{[\omega_{0}]}({\bf{E}})}{Z_{[\omega_{0}]}({\bf{F}})} \Big) = 0 \iff \underbrace{{\rm{Arg}} \Big ( \int_{X_{P}}(\omega_{0} + \sqrt{-1}\chi_{{\bf{E}}})^{n}\Big )}_{\Theta_{\omega_{0}}(\chi_{{\bf{E}}})} - \underbrace{{\rm{Arg}} \Big ( \int_{X_{P}}(\omega_{0} + \sqrt{-1}\chi_{{\bf{F}}})^{n}\Big )}_{\Theta_{\omega_{0}}(\chi_{{\bf{F}}})} \in 2\pi \mathbbm{Z},}$$
see Remark \ref{dHYMisHYM}. By taking the Hermitian structures ${\bf{h}}_{{{\bf{E}}}}$ on  ${\bf{E}}$ and ${\bf{h}}_{{{\bf{F}}}}$ on  ${\bf{F}}$, such that the curvatures of the associated Chern connections $\nabla^{{\bf{E}}}$ and $\nabla^{{\bf{F}}}$ satisfy
\begin{equation}
\frac{\sqrt{-1}}{2\pi}F_{\nabla^{{\bf{E}}}} = \chi_{{\bf{E}}} \ \ \ \ \ {\text{and}} \ \ \ \ \ \frac{\sqrt{-1}}{2\pi}F_{\nabla^{{\bf{F}}}} = \chi_{{\bf{F}}},
\end{equation}
it follows from Eq. (\ref{phaseanglehigherbundle}) that
\begin{equation}
{\rm{Im}}\bigg ( \frac{Z_{[\omega_{0}]}({\bf{E}})}{Z_{[\omega_{0}]}({\bf{F}})} \bigg ) = 0 \iff \hat{\Theta}({\bf{E}}) = \hat{\Theta}({\bf{F}}) \ ({\rm{mod}} \ 2\pi).
\end{equation}
Considering the Hermitian structure ${\bf{h}}$ on ${\bf{E}} \oplus {\bf{F}}$ induced by ${\bf{h}}_{{{\bf{E}}}}$ and ${\bf{h}}_{{{\bf{F}}}}$, it follows that the curvature of the associated Chern connection $\nabla = \nabla^{{\bf{E}}} \oplus \nabla^{{\bf{F}}}$ is given by
\begin{equation}
\frac{\sqrt{-1}}{2\pi}F_{\nabla} = \begin{pmatrix} \chi_{{\bf{E}}} & 0 \\
 0 & \chi_{{\bf{F}}} \end{pmatrix}.
\end{equation}
From above, we obtain
\begin{equation}
\int_{X_{P}}{\rm{tr}}\Big (\omega_{0} \otimes \mathbbm{1}_{{\bf{E}}\oplus {\bf{F}}} - \frac{1}{2\pi}F_{\nabla}\Big)^{n}= \int_{X_{P}}\Big (\omega_{0}  + \sqrt{-1} \chi_{{\bf{E}}}\Big)^{n} +  \int_{X_{P}}\Big (\omega_{0}  + \sqrt{-1} \chi_{{\bf{F}}}\Big)^{n}.
\end{equation}
Thus, we conclude that 
\begin{equation}
\label{equivalentphase}
{\rm{Im}}\bigg ( \frac{Z_{[\omega_{0}]}({\bf{E}})}{Z_{[\omega_{0}]}({\bf{F}})} \bigg ) = 0 \iff \begin{cases} \hat{\Theta}({\bf{E}} \oplus {\bf{F}}) = \hat{\Theta}({\bf{E}}) \ ({\rm{mod}} \ 2\pi) \\ \hat{\Theta}({\bf{E}} \oplus {\bf{F}}) = \hat{\Theta}({\bf{F}}) \ ({\rm{mod}} \ 2\pi)\end{cases}.
\end{equation}
Since
\begin{center}
${\rm{e}}^{-\sqrt{-1} \hat{\Theta}({\bf{E}} \oplus {\bf{F}})}\Big (\omega_{0} \otimes \mathbbm{1}_{{\bf{E}}\oplus {\bf{F}}} - \frac{1}{2\pi}F_{\nabla} \Big )^{n}  = {\rm{e}}^{-\sqrt{-1} \hat{\Theta}({\bf{E}} \oplus {\bf{F}})}\begin{pmatrix} (\omega_{0}  + \sqrt{-1} \chi_{{\bf{E}}})^{n} & 0 \\
 0 & (\omega_{0}  + \sqrt{-1} \chi_{{\bf{F}}})^{n}\end{pmatrix}$,
\end{center}
from Eq. (\ref{equivalentphase}) and from Theorem \ref{ProofTheo1} (see Remark \ref{dHYMisHYM}), we conclude that 
\begin{equation}
{\rm{Im}}\bigg ( \frac{Z_{[\omega_{0}]}({\bf{E}})}{Z_{[\omega_{0}]}({\bf{F}})} \bigg ) = 0 \Rightarrow {\rm{Im}} \Bigg ( {\rm{e}}^{-\sqrt{-1} \hat{\Theta}({\bf{E}} \oplus {\bf{F}})}\Big (\omega_{0} \otimes \mathbbm{1}_{{\bf{E}}\oplus {\bf{F}}} - \frac{1}{2\pi}F_{\nabla} \Big )^{n}\Bigg) = 0,
\end{equation}
i.e., $\nabla = \nabla^{{\bf{E}}} \oplus \nabla^{{\bf{F}}}$ is a dHYM connection on ${\bf{E}} \oplus {\bf{F}}$.
\end{proof}

As a consequence of the above result and the ideas introduced in Section \ref{SecProoftheoA}, we have the following theorem.

\begin{theorem}[Theorem \ref{theoC}]
Under the hypotheses of Theorem \ref{TheoA}, for every integer $r > 1$, there exists a Hermitian holomorphic vector bundle $({\bf{E}},{\bf{h}}) \to ({\mathbbm{P}}(T_{{\mathbbm{P}^{2}}}),\omega_{0})$, such that $\rank({\bf{E}}) = r$, and the following hold:
\begin{enumerate}
\item ${\bf{E}}$ is slope-unstable and $h^{0}({\mathbbm{P}}(T_{{\mathbbm{P}^{2}}}),{\rm{End}}({\bf{E}})) > 1$;
\item considering $\varphi({\bf{E}}) = {\rm{Arg}}\big(Z_{[\omega_{0}]}({\bf{E}})\big )\ ({\rm{mod}} \ 2\pi)$, we have
\begin{equation}
{\rm{Im}}\Big ({\rm{e}}^{-\sqrt{-1}\varphi({\bf{E}})}Z_{\omega_{0}}({\bf{E}},\nabla) \Big) = 0,
\end{equation}
\end{enumerate}
where $\nabla$ is the Chern connection associated with ${\bf{h}}$.
\end{theorem}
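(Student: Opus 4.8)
The plan is to realize $\mathbf{E}$ as a direct sum of two line bundles, taken with suitable multiplicities, both drawn from the phase locus $\mathcal{L}_{\pi}(\omega_{0})$ already built in the proof of Lemma \ref{L3}. Concretely, I would fix the two line bundles $\mathbf{F} = \mathscr{O}_{\alpha_{1}}(2)\otimes\mathscr{O}_{\alpha_{2}}(6)$ and $\mathbf{G} = \mathscr{O}_{\alpha_{1}}(3)\otimes\mathscr{O}_{\alpha_{2}}(4)$ from Lemma \ref{L3}, which satisfy $\Theta_{\omega_{0}}(\chi_{\mathbf{F}}) = \Theta_{\omega_{0}}(\chi_{\mathbf{G}}) = \pi$ while $\Lambda_{\omega_{0}}(\chi_{\mathbf{F}}) = 6 \neq \tfrac{21}{4} = \Lambda_{\omega_{0}}(\chi_{\mathbf{G}})$, so that $\mu_{[\omega_{0}]}(\mathbf{F}) \neq \mu_{[\omega_{0}]}(\mathbf{G})$. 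Given $r > 1$, I would write $r = a + b$ with integers $a,b \geq 1$ and set $\mathbf{E} := \mathbf{F}^{\oplus a}\oplus\mathbf{G}^{\oplus b}$, with the Hermitian structure $\mathbf{h}$ induced by the invariant metrics on $\mathbf{F}$ and $\mathbf{G}$ of Remark \ref{HYMinstantonlinebundle}. Then $\rank(\mathbf{E}) = r$, and the associated Chern connection $\nabla$ is block diagonal with curvature blocks $\chi_{\mathbf{F}}$ (repeated $a$ times) and $\chi_{\mathbf{G}}$ (repeated $b$ times).

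For item (1), I would first observe that $h^{0}(\mathbbm{P}(T_{\mathbbm{P}^{2}}),\End(\mathbf{E})) > 1$ is automatic: the two projections onto the $\mathbf{F}$- and $\mathbf{G}$-isotypic summands are linearly independent global holomorphic endomorphisms, so $h^{0}(\End(\mathbf{E})) \geq 2$. For slope-instability, since $\mu_{[\omega_{0}]}(\mathbf{E})$ equals the convex combination $\tfrac{a\,\mu_{[\omega_{0}]}(\mathbf{F}) + b\,\mu_{[\omega_{0}]}(\mathbf{G})}{a+b}$ with positive weights and $\mu_{[\omega_{0}]}(\mathbf{F}) \neq \mu_{[\omega_{0}]}(\mathbf{G})$, it lies strictly between the two slopes; hence whichever of $\mathbf{F},\mathbf{G}$ has the larger slope embeds as a line subbundle of $\mathbf{E}$ of slope strictly greater than $\mu_{[\omega_{0}]}(\mathbf{E})$, and $\mathbf{E}$ is slope-unstable.

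For item (2), I would invoke the equivalence between the $Z$-critical equation Eq. (\ref{Zcritical}) and the dHYM equation Eq. (\ref{dHYMhigher}) recorded after the statement of Theorem \ref{theoC}, together with $\varphi(\mathbf{E}) = {\rm{Arg}}(Z_{[\omega_{0}]}(\mathbf{E}))$; it then suffices to check that $\nabla$ is a dHYM instanton. Exactly as in the proofs of Theorem \ref{TheoB} and Lemma \ref{L3}, the block-diagonal form of $\tfrac{\sqrt{-1}}{2\pi}F_{\nabla}$ makes $\big(\omega_{0}\otimes\mathbbm{1}_{\mathbf{E}} - \tfrac{1}{2\pi}F_{\nabla}\big)^{3}$ a diagonal matrix whose nonzero entries are $(\omega_{0} + \sqrt{-1}\chi_{\mathbf{F}})^{3}$ and $(\omega_{0} + \sqrt{-1}\chi_{\mathbf{G}})^{3}$. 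Since $\Theta_{\omega_{0}}(\chi_{\mathbf{F}}) = \Theta_{\omega_{0}}(\chi_{\mathbf{G}}) = \pi$, Theorem \ref{ProofTheo1} (see Remark \ref{dHYMisHYM}) shows each entry has constant Lagrangian phase $\pi$, so $\hat{\Theta}(\mathbf{E}) = \pi \ (\mathrm{mod}\ 2\pi)$ and ${\rm{Im}}\big(e^{-\sqrt{-1}\hat{\Theta}(\mathbf{E})}(\omega_{0}\otimes\mathbbm{1}_{\mathbf{E}} - \tfrac{1}{2\pi}F_{\nabla})^{3}\big) = 0$; transporting this through the dHYM–$Z$-critical equivalence yields Eq. (\ref{Zcritical}).

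The one point requiring genuine care is attaining \emph{arbitrary} rank $r$: the phase locus $\mathcal{L}_{\pi}(\omega_{0})$ consists of only finitely many line bundles (the six solutions of $s_{1}s_{2} = 12$ with $s_{1} > 0$), so large rank cannot be reached using distinct summands. Allowing repeated summands removes this obstruction at once, but then one must re-examine slope-instability, since increasing multiplicities could a priori move $\mu_{[\omega_{0}]}(\mathbf{E})$ onto a summand's slope; the convexity argument in item (1) shows this never occurs as long as $\mathbf{F}$ and $\mathbf{G}$ retain distinct slopes. Everything else is a routine transcription of the rank-$2$ construction of Lemma \ref{L3}.
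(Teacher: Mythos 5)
Your proposal is correct and follows essentially the same route as the paper: the paper takes ${\bf{E}} = {\bf{F}} \oplus {\bf{G}}^{\oplus (r-1)}$ with the same two line bundles ${\bf{F}} = \mathscr{O}_{\alpha_{1}}(2)\otimes\mathscr{O}_{\alpha_{2}}(6)$, ${\bf{G}} = \mathscr{O}_{\alpha_{1}}(3)\otimes\mathscr{O}_{\alpha_{2}}(4)$ from $\mathcal{L}_{\pi}(\omega_{0})$, and your ${\bf{F}}^{\oplus a}\oplus{\bf{G}}^{\oplus b}$ is a harmless generalization verified by the same block-diagonal computation. The only point where the paper does more than you is that it proves, rather than cites, the equivalence between Eq. (\ref{Zcritical}) and the dHYM equation, via the binomial identity $Z_{\omega_{0}}({\bf{E}},\nabla) = -\frac{(-\sqrt{-1})^{3}}{3!}\big(\omega_{0}\otimes\mathbbm{1}_{{\bf{E}}} - \frac{1}{2\pi}F_{\nabla}\big)^{3}$ together with $\varphi({\bf{E}}) = \hat{\Theta}({\bf{E}}) + \frac{3\pi}{2}\ ({\rm{mod}}\ 2\pi)$; since the paper itself records that equivalence before the theorem, your appeal to it is acceptable.
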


\begin{proof}
Given a Hermitian holomorphic vector bundle $({\bf{E}},{\bf{h}}) \to ({\mathbbm{P}}(T_{{\mathbbm{P}^{2}}}),\omega_{0})$, denoting by $\nabla$ the associated Chern connection, we have that
\begin{equation}
\widehat{{\rm{ch}}({\bf{E}},\nabla)} = \exp \bigg (\frac{\sqrt{-1}}{2\pi}F_{\nabla} \bigg )  = \widehat{{\rm{ch}}_{0}({\bf{E}},\nabla)} + \widehat{{\rm{ch}}_{1}({\bf{E}},\nabla)} + \widehat{{\rm{ch}}_{2}({\bf{E}},\nabla)} + \widehat{{\rm{ch}}_{3}({\bf{E}},\nabla)}, 
\end{equation}
such that 
\begin{equation}
\widehat{{\rm{ch}}_{k}({\bf{E}},\nabla)} = \frac{1}{k!} \Big ( \frac{\sqrt{-1}}{2\pi}F_{\nabla}\Big )^{k}  = \frac{1}{k!}  \underbrace{ \Big ( \frac{\sqrt{-1}}{2\pi}F_{\nabla}\Big ) \wedge \cdots \wedge \Big ( \frac{\sqrt{-1}}{2\pi}F_{\nabla}\Big )}_{k-{\text{times}}},
\end{equation}
for all $k = 0,1,2,3$. From above, considering the ${\rm{End}}({\bf{E}})$-valued $(3,3)$-form
\begin{equation}
Z_{\omega_{0}}({\bf{E}},\nabla) := -\sum_{j=0}^{3}\frac{(-\sqrt{-1})^{j}}{j!}\omega_{0}^{j} \wedge \widehat{{\rm{ch}}_{3-j}({\bf{E}},\nabla)},
\end{equation}
it follows that 
\begin{equation}
Z_{\omega_{0}}({\bf{E}},\nabla) = -\sum_{j=0}^{3}\frac{(-\sqrt{-1})^{j}}{j!} \Bigg ( \frac{1}{(3-j)!} \omega_{0}^{j} \wedge \Big ( \frac{\sqrt{-1}}{2\pi}F_{\nabla}\Big )^{3-j} \Bigg ).
\end{equation}
Since $\frac{1}{j!(3-j)!} = \frac{1}{3!} \binom{3}{j}$, $\forall j = 0,\ldots,3$, we obtain
\begin{equation}
\label{binomial1}
Z_{\omega_{0}}({\bf{E}},\nabla) = -\sum_{j=0}^{3}\frac{(-\sqrt{-1})^{j}}{3!} \Bigg ( \binom{3}{j} \omega_{0}^{j} \wedge \Big ( \frac{\sqrt{-1}}{2\pi}F_{\nabla}\Big )^{3-j} \Bigg ).
\end{equation}
Now we observe that 
\begin{equation}
\label{binomial2}
\Big (  \omega \otimes \mathbbm{1}_{{\bf{E}}} - \frac{1}{2\pi}F_{\nabla} \Big )^{3} = \sum_{j = 0}^{3}(-1)^{3-j}\binom{3}{j} \omega_{0}^{j} \wedge \Big ( \frac{1}{2\pi}F_{\nabla}\Big )^{3-j}.
\end{equation}
Therefore, replacing $(-1)^{j} = (-1)^{3}(-1)^{3-j}$, $j = 0,\ldots,3$, in Eq. (\ref{binomial1}), it follows from Eq. (\ref{binomial2}) that
\begin{equation}
Z_{\omega_{0}}({\bf{E}},\nabla) = -\frac{(-\sqrt{-1})^{3}}{3!}\Big ( \omega \otimes \mathbbm{1}_{{\bf{E}}}  - \frac{1}{2\pi}F_{\nabla} \Big )^{3}.
\end{equation}
In particular, if ${\bf{E}} = {\bf{L}}_{1} \oplus \cdots \oplus {\bf{L}}_{r}$, such that ${\bf{L}}_{\ell} \in {\rm{Pic}}({\mathbbm{P}}(T_{{\mathbbm{P}^{2}}}))$, for every $\ell = 1,\ldots,r$, by taking the ${\rm{SU}}(3)$-invariant representative $\chi_{{\bf{L}}_{\ell}} \in c_{1}({\bf{L}}_{\ell})$, $\forall \ell = 1,\ldots,r$, and choosing Hermitian structures ${\bf{h}}_{\ell}$, such that $\frac{\sqrt{-1}}{2\pi}F_{\nabla^{(\ell)}} = \chi_{{\bf{L}}_{\ell}}$, where $\nabla^{(\ell)}$ is the associated Chern connection of ${\bf{h}}_{\ell}$, for all $\ell = 1,\ldots,r$, we have an induced Hermitian structure ${\bf{h}}$ on ${\bf{E}}$, such that $\nabla = \nabla^{(1)} \oplus \cdots \oplus \nabla^{(r)}$ is the associated Chern connection of ${\bf{h}}$. Thus, we obtain
\begin{equation}
\label{matrixform}
Z_{\omega_{0}}({\bf{E}},\nabla) = - \frac{(-\sqrt{-1})^{3}}{3!}\begin{pmatrix} (\omega_{0} + \sqrt{-1}\chi_{{\bf{L}}_{1}})^{3} & \cdots & 0 \\
 \vdots & \ddots & \vdots \\
 0 & \cdots & (\omega_{0} + \sqrt{-1}\chi_{{\bf{L}}_{r}})^{3}\end{pmatrix}.
\end{equation}
On the other hand, we have
\begin{equation}
Z_{[\omega_{0}]}({\bf{E}}) = -\int_{{\mathbbm{P}}(T_{{\mathbbm{P}^{2}}})}{\rm{e}}^{-\sqrt{-1}[\omega_{0}]}{\rm{ch}}({\bf{E}}) = -\sum_{\ell = 1}^{r} \int_{{\mathbbm{P}}(T_{{\mathbbm{P}^{2}}})}{\rm{e}}^{-\sqrt{-1}[\omega_{0}]}{\rm{ch}}({\bf{L}}_{\ell}).
\end{equation}
Since 
\begin{equation}
 \underbrace{-\int_{{\mathbbm{P}}(T_{{\mathbbm{P}^{2}}})}{\rm{e}}^{-\sqrt{-1}[\omega_{0}]}{\rm{ch}}({\bf{L}}_{\ell})}_{Z_{[\omega_{0}]}({\bf{L}}_{\ell})} = -\frac{(-\sqrt{-1})^{3}}{3!}\int_{{\mathbbm{P}}(T_{{\mathbbm{P}^{2}}})}\big([\omega_{0}] + \sqrt{-1}c_{1}({\bf{L}}_{\ell})\big)^{3},
\end{equation}
if we suppose that 
\begin{equation}
{\rm{Im}}\bigg ( \frac{Z_{[\omega_{0}]}({\bf{L}}_{\ell})}{Z_{[\omega_{0}]}({\bf{L}}_{\ell+1})} \bigg ) = 0 \iff \hat{\Theta}({\bf{L}}_{\ell}) = \hat{\Theta}({\bf{L}}_{\ell+1}) \ ({\rm{mod}} \ 2\pi).
\end{equation}
for all $\ell = 1,\ldots,r-1$, it follows that 
\begin{equation}
\varphi({\bf{E}}) = {\rm{Arg}} \Big ( Z_{[\omega_{0}]}({\bf{E}})\Big ) = \Big [\hat{\Theta}({\bf{L}}_{\ell}) + \frac{3\pi}{2} \Big]\ ({\rm{mod}} \ 2\pi),
\end{equation}
for every $\ell = 1,\ldots,r$, notice that $-(-\sqrt{-1})^{3} = {\rm{e}}^{\frac{3\pi}{2}\sqrt{-1}}$. Since $\hat{\Theta}({\bf{L}}_{\ell}) = \Theta_{\omega_{0}}(\chi_{{\bf{L}}_{\ell}}) \ ({\rm{mod}} \ 2\pi)$, for all $\ell = 1,\ldots,r$, see for instance Eq. (\ref{phaselinebundle}), it follows from Eq. (\ref{matrixform}) that 
\begin{equation}
{\rm{Im}} \Big ({\rm{e}}^{-\sqrt{-1}\varphi({\bf{E}})}Z_{\omega_{0}}({\bf{E}},\nabla) \Big ) = 0 \iff  {\rm{Im}} \Big ({\rm{e}}^{-\sqrt{-1}\Theta_{\omega_{0}}(\chi_{{\bf{L}}_{\ell}})}(\omega_{0} + \sqrt{-1}\chi_{{\bf{L}}_{\ell}})^{3} \Big ) = 0, 
\end{equation}
$\forall \ell =1,\ldots,r$. From above, in order to conclude the proof, one can consider, for instance, ${\bf{L}}_{1}, \ldots,{\bf{L}}_{r} \in {\rm{Pic}}({\mathbbm{P}}(T_{{\mathbbm{P}^{2}}}))$, such that 
\begin{equation}
{\bf{L}}_{1} = \mathscr{O}_{\alpha_{1}}(2) \otimes  \mathscr{O}_{\alpha_{2}}(6) \ \ \ {\text{and}} \ \ \ {\bf{L}}_{\ell} = \mathscr{O}_{\alpha_{1}}(3) \otimes  \mathscr{O}_{\alpha_{2}}(4), \ \forall \ell = 2, \ldots,r.
\end{equation}
Defining ${\bf{E}} := {\bf{L}}_{1} \oplus \cdots \oplus {\bf{L}}_{r}$, we notice that 
\begin{enumerate}
\item[(A)] $\mu_{[\omega_{0}]}({\bf{L}}_{1}) \neq \mu_{[\omega_{0}]}({\bf{L}}_{\ell})$, $\forall \ell = 2,\ldots,r$,
\item[(B)] $\hat{\Theta}({\bf{L}}_{\ell}) = \pi \ ({\rm{mod}} \ 2\pi)$, $\forall \ell = 1,\ldots,r$,
\item[(C)]${\rm{End}}({\bf{E}}) = {\rm{End}}({\bf{L}}_{1}) \oplus \cdots \oplus {\rm{End}}({\bf{L}}_{r})$,
\end{enumerate}
for item (A) and item (B), see Section \ref{Unstableconstruction}. From item (A) we have that ${\bf{E}}$ is slope-unstable, from item (B) and the construction presented above we have that there exists a Hermitian structure ${\bf{h}}$ on ${\bf{E}}$, such that the associated Chern connection $\nabla$ is a solution of the equation
\begin{equation}
{\rm{Im}} \Big ({\rm{e}}^{-\sqrt{-1}\varphi({\bf{E}})}Z_{\omega_{0}}({\bf{E}},\nabla) \Big ) = 0.
\end{equation}
From item (C), it follows that 
\begin{equation}
h^{0}({\mathbbm{P}}(T_{{\mathbbm{P}^{2}}}),{\rm{End}}({\bf{E}})) = \dim \big ( H^{0}({\mathbbm{P}}(T_{{\mathbbm{P}^{2}}}),{\rm{End}}({\bf{E}}))\big ) > 1,
\end{equation}
which concludes the proof.
\end{proof}

\bibliographystyle{alpha}
\bibliography{biblio}

\end{document}